\documentclass[11pt, a4paper]{article}
\usepackage{graphicx} % Required for inserting images

\usepackage[algo2e,ruled,vlined]{algorithm2e}
\usepackage[noadjust]{cite}
\usepackage{amsmath, amsfonts, amsthm}
\usepackage{graphicx,soul}
\usepackage[colorlinks=true, allcolors=blue]{hyperref}
\usepackage{amsfonts}
\usepackage{color,xcolor}
\usepackage{multirow}
\usepackage{soul}

\usepackage{amssymb,latexsym}

\usepackage{algorithm}
\usepackage{algorithmicx}
\usepackage{algpseudocode}
\usepackage{mathtools}
\newif\ifPDF
\ifx\pdfoutput\undefined
\PDFfalse
\else
\ifnum\pdfoutput > 0
\PDFtrue
\else
\PDFfalse
\fi
\fi

\ifPDF
\usepackage{pdftricks}
\begin{psinputs}
	\usepackage{pstricks}
	\usepackage{pstcol}
	\usepackage{pst-plot}
	\usepackage{pst-tree}
	\usepackage{pst-eps}
	\usepackage{multido}
	\usepackage{pst-node}
	\usepackage{pst-eps}
\end{psinputs}
\else
\usepackage{pstricks}
\fi

\usepackage[openbib]{currvita}

\usepackage{fancyhdr}

\newtheorem{theorem}{Theorem}[section]
\newtheorem{lemma}[theorem]{Lemma}

\usepackage{nicematrix}
\newcommand{\dint}{\displaystyle\int}
\newcommand{\eps}{\varepsilon}

\newcommand{\bbR}{\mathbb R} \newcommand{\bbS}{\mathbb S}

\newcommand{\cA}{\mathcal A}

 \newcommand{\cH}{\mathcal H}
 
 \newcommand{\cL}{\mathcal L}
 
\newcommand{\cO}{\mathcal O}  
 
 \newcommand{\cT}{\mathcal T}

\newenvironment{keywords}
{\noindent{\bf Key words.}\small}{\par\vspace{1ex}}

\makeatletter
\newcommand{\chapterauthor}[1]{%
	{\parindent0pt\vspace*{-25pt}%
		\linespread{1.1}\large\scshape#1%
		\par\nobreak\vspace*{35pt}}
	\@afterheading%
}
\makeatother

\usepackage[table]{xcolor}% http://ctan.org/pkg/xcolor
\usepackage{graphicx}
\usepackage{subcaption}
\newcommand{\G}{\mathcal{G}}

\newcommand{\ReLU}{\text{ReLU}}

\title{From Frequency Bias to Spectral Balance: \\ Operator-Aware Preconditioners for PINNs}
\author{Roy Y. He~\thanks{Department of Mathematics, City University of Hong Kong, Kowloon Tong, Hong Kong}\and Ying Liang~\thanks{Department of Mathematics, Duke University, Durham, NC, USA}\and Hongkai Zhao~\thanks{Department of Mathematics, Duke University, Durham, NC, USA}\and Yimin Zhong~\thanks{Department of Mathematics and Statistics, Auburn University, Auburn, AL, USA}}
\date{\today}
\begin{document}
\maketitle
\begin{abstract}
% When neural networks (NNs), a special form of nonlinear parametrization for approximating functions, are used to solve partial differential equation (PDE), while enjoying mesh-free, easy implementations, scalable to high dimensions, and the potential of adaptivity, this nonlinear representation may lead to complications of the formulation and, more importantly, a significant computational hurdle: the necessity of solving a large-scale non-convex optimization problem, for which only gradient-based methods are practically feasible. 
% Consequently, a basic computational question arises: what can we expect regarding the accuracy, stability, convergence, and computational cost when using NNs to solve PDEs? 
% In this work, we will mathematically and numerically investigate some of the above issues when NNs are used to solve elliptic PDEs. In particular, we show that the presence of differential operators in the loss function can result in ill-conditioning and high frequency bias, opposite to the typical low frequency bias in direct NN representation and training, in the training process which leads to slow convergence and poor accuracy when gradient-based methods are used. We propose a simple and effective operator-aware preconditioner, based on an integral operator, that results in a rebalance of the optimization landscape and much improved convergence.  We use extensive numerical experiments, including multiscale and variable-coefficient problems, to corroborate our study.

When neural networks (NNs) are used as a type of nonlinear parametric representation to solve partial differential equations (PDEs), they often display frequency-dependent learning dynamics that can differ from those seen in direct function approximation tasks, resulting from a balance between the frequency bias of the NN representation and that of the underlying differential operator. Although many commonly used NNs exhibit a bias towards low-frequency modes in representation, the presence of differential operators in the loss function, which amplifies high-frequency components, can lead to high frequency bias. In this work, using second order elliptic PDEs as an example, we show how these two factors compete and lead to an overall frequency bias in different situations. Once the balance is determined, it is important to design computational strategies to counter the resulting bias to improve training efficiency. We propose a simple operator-aware preconditioning strategy that rebalances the optimization landscape and the learning dynamics by applying an auxiliary integral operator to the residual. The integral kernel can be the Green's function of a reference elliptic operator or an approximation, and integrates easily with common NN solvers for PDEs. Extensive experiments, including multiscale and variable-coefficient problems, show that the approach restores more balanced learning dynamics across modes and substantially improves both convergency and accuracy.
\end{abstract}
\begin{keywords}
    frequency bias, preconditioner, physics-informed neural network
\end{keywords}
\section{Introduction}

Neural networks (NNs) provide a special form of nonlinear parametric representation for approximating functions. There are extensive results in terms of mathematical approximation theory for NNs. One of the main themes is showing universal approximation property (UAP) of NNs~\cite{cybenko1989approximations,barron2002universal,devore2021neural,kidger2020universal, lu2017expressive, eldan2016power, zhang2022deep}. However, these existing mathematical results do not address the key challenges of how close and how costly one can compute the solution by solving a large-scale non-convex optimization problem using gradient-based techniques, which are sensitive to the conditioning of the parametrized representation and often become the computational bottleneck in practice. 

A common feature in NN representation is the low-frequency bias. The frequency bias refers to the network's tendency to learn low-frequency components of a target function more quickly than high-frequency ones during training, which makes it computationally more difficult and costly for networks to capture high-frequency components and fine features of target functions. This phenomenon was first systematically demonstrated by \cite{rahaman2019spectral}, who analyzed the Fourier spectra of neural network outputs and showed that standard architectures with ReLU or Tanh activation functions prioritize smoother, low-frequency features, even without explicit regularization. Frequency bias can be interpreted through the lens of Neural Tangent Kernel (NTK) theory, in which training dynamics are decomposed into eigenspaces of the NTK, each associated with distinct kernel eigenvalues \cite{jacot2018neural, cao2021towards}. 
The NTK eigenvalues corresponding to high-frequency eigenfunctions decay polynomially with frequency, leading to slower learning of high-frequency components. While this implicit regularization can be beneficial for promoting smoothness, it becomes problematic when fine details or multiscale features are essential for accurate approximation. An explicit theoretical and numerical analysis~\cite{zhang2025shallow} showed ill-conditioning and frequency bias for two-layer NNs in both representation and training, due to strong correlations among parametrized global activations.

Naturally, neural networks (NNs) can be used to represent/approximate the solutions of PDEs. Numerous works have explored this approach, including Physics-Informed Neural Networks (PINNs)~\cite{raissi2019physics,raissi2020hidden}, the Deep Ritz Method (DRM)~\cite{weinan2018deep},  Weak Adversarial Networks (WAN)~\cite{zang2020weak}, and among others~\cite{lu2021learning,li2020fourier,de2024wpinns}. Significant efforts have been devoted to the theoretical justification of these methods. For instance, the consistency of PINNs for linear second-order elliptic and parabolic PDEs was established in~\cite{shin2020convergence}, while the generalization error of PINNs for data assimilation was analyzed in~\cite{mishra2023estimates} by examples. Analogous results for the DRM are also available~\cite{muller2019deep, duan2022convergence, lu2021priori, muller2022error}. The general conclusion from these works is that, given sufficient data and appropriate handling of boundary conditions, the minimizer of the associated optimization problem converges to the solution of the PDE.

However, these theoretical guarantees often do not translate easily into practice. Although solving PDEs using NN representation can potentially offer significant advantages, such as being mesh-free, easy implementation, scalable to high dimensions, and the potential of adaptivity, the nonlinear representation has to be solved through large-scale non-convex optimization using gradient-based methods in real practice.
%the formulation of which can be subtle, e.g., how to enforce the differential equation along with its boundary and/or initial conditions correctly. Furthermore, it leads to a significant computational hurdle: the necessity of solving a large-scale, non-convex optimization problem, for which only gradient-based methods are practically feasible.  
Again, ill-conditioning and frequency bias can become problematic for numerical computations, particularly for those PDEs with solutions containing multiple frequency scales or sharp features.  The presence of differential operators introduces additional factors that affect the overall ill-conditioning and frequency bias in the training process. \cite{wang2021eigenvector} recasts spectral bias in PDE solving tasks as an ``NTK eigenvector bias'', and also shows that incorporating Fourier feature mappings can reshape a network's spectral preferences, enabling better learning of the high-frequency components. 
%In a related contribution, \cite{yutuning} demonstrates that adopting Sobolev norms as loss functions for approximating functions can systematically reverse the typical low-frequency bias. Although the task in \cite{yutuning} is not solving PDEs, the Sobolev norm introduced derivative information, and their analysis of training dynamics reveals that penalizing solutions in a Sobolev space alters the emphasis on different Fourier modes, offering a principled way to control or invert spectral bias during training. This has direct implications for neural solvers of PDEs, where the ability to tune frequency sensitivity is often essential for capturing fine-scale solution structures.
In our earlier work \cite{he2025can}, we showed that ill-conditioning and low-frequency bias for two-layer NNs still dominate the opposite effect of a differential operator in the training dynamics. This consequence leads to numerical challenges for efficiency and accuracy compared to linear representations, such as the finite element method, with well-developed preconditioners. %Hence, using two-layer NNs can not compete against well-developed linear representations such as finite element methods. 

% Our study in this work will focus on the frequency bias when solving elliptic PDEs using multi-layer NNs.
In this work, we will focus on the multi-layer NNs. The story is quite different from the one for two-layer NNs. First of all, the representation capability and flexibility for multi-layer NNs are much better than two-layer NNs. In particular, its low-frequency bias can be influenced by several factors, such as network architecture, activation functions, weight initialization, and the trajectory followed by gradient descent during optimization. When trained for approximation,  it is demonstrated in \cite{basri2020frequency} and \cite{tancik2020fourier} that input encoding schemes and network depth significantly influence the frequency content learned during training.  In \cite{hong2022activation}, it is shown that the choice of activation function also plays a crucial role, while networks with {ReLU} as activation function exhibit spectral bias connected to finite element theory, alternative activations like piecewise linear B-splines can remove this bias. Recent theoretical advances in \cite{zhang2025shallow} demonstrate that two-layer neural networks act as `low-pass filters' due to ill-conditioning of the Gram matrix, and in follow-up works~\cite{zhang2025structured, zhang2025fourier},  it is further explored that with a multi-component and multi-layer neural network (MMNN) structure,  Sine activation function, and properly scaled random initialization of the first layer, NNs can be trained to approximate high-frequency modes effectively.
Furthermore, properties of the optimization process, such as the evolution of sharpness \cite{barrett2020implicit} and the path traversed through the loss landscape, also modulate mode dynamics. 

When using multi-layer NN representation to solve PDEs, the involvement of differential operators in the loss function, such as in PINN, which inherently introduce high frequency amplification/bias, can be a potential game changer of the overall ill-conditioning and frequency bias of the formulation and the learning dynamics. In particular, we show that, due to the flexibility, capacity, and mild frequency bias of well-designed multi-layer NNs, such as Fourier MMNNs using Sine activation function with proper initial scaling, ill-conditioning and high-frequency bias of an elliptic operator will dominate the overall formulation and the corresponding learning dynamics. 
As a comparison, when the finite element method is used for an elliptic PDE, since the finite element representation has no frequency bias, the ill-conditioning and high frequency bias of the resulting linear system are inherited from the differential operator. As a result, when simple iterative methods are used to solve the linear system, errors in low frequencies decay very slowly, leading to slow convergence even for smooth solutions. This is unacceptable in practice and requires the design of effective preconditioning strategies. Similarly, motivated by the observation that the underlying differential operator creates the overall frequency bias in the formulation, we propose an operator-aware preconditioning strategy that rebalances the spectral weights and training dynamics accordingly.  
%Our strategy decouples two sources of frequency bias: the neural network architecture and the differential operator. Following insights from recent works on neural network resolution, we first minimize the architectural bias through proper weight scaling. 
% Specifically, we utilize {Sine} activation function and initialize first-layer weights with variance proportional to the network width raised to a power, which ensures the network has sufficient representation capability to represent all relevant frequencies with minimal inherent bias.
%We then address the operator-induced bias separately through preconditioning. 
The preconditioner is a linear operator based on a reference elliptic operator that modifies the optimization landscape, effectively reshaping the loss function to balance frequency components. This preconditioner transforms the residual before computing the loss, counteracting the frequency amplification caused by the differential operator. The preconditioner acts as a spectral filter that attenuates the differential operator-induced bias while preserving the physics constraints, accelerating the convergence to an accurate solution.

% However, the involvement of differential operators, which automatically introduces high frequency amplification/bias, is a potential game changer. 
% Here we use the two most popular formulations: Physics Informed Neural Networks (PINNs) and the Deep Ritz Method (DRM), as examples for our study. First, we identify the potential issues, from the formulation to its computational implications. For example, we first show that the weak formulation of DRM makes it more sensitive to the balance of the enforcement of Dirichlet boundary condition and the use of mini-batch in a not well over-sampled regime. For PINNs, due to the use of strong formulation, it leads to ill-conditioning and frequency bias, which poses serious computational challenge to any gradient-based optimization. In particular, we show that the presence of differential operators in the loss function can result in high frequency bias, opposite to the typical low frequency bias in NN representation, which leads to slow convergence and poor accuracy especially when the solution containing multiple scales or sharp features. We will design a simple and effective operator-aware preconditioner based on an integral operator corresponding to the inversion of the free space Laplace equation. We show that this results in a rebalance of the optimization landscape and much improved convergence. 

Several other approaches have been proposed to alleviate the ill-conditioning and frequency bias of PINNs. Sobolev-norm losses replace the standard squared residual with norms that include derivative information, effectively giving more weight to high-frequency components; see, for example, Sobolev-PINNs \cite{son2021sobolev} and the analysis of Sobolev acceleration \cite{lu2022sobolev}. More recently, \cite{de2023operator} analyzes PINN training through an operator-conditioning lens and proposes explicit preconditioners that reduce the condition number of the composite neural tangent kernel and PDE operator. Related strategies include weighted-residual methods that use spatially-varying weight functions to balance the loss, learned preconditioners that adaptively reweight different spectral modes during training, and residual-based attention mechanisms \cite{anagnostopoulos2024residual}  that dynamically emphasize high-residual collocation points. 
These strategies often require additional computational overhead, such as computing high-order derivatives, solving auxiliary eigenvalue problems, or extensive hyperparameter tuning. Different from the works above, our approach requires only the Green's function of a simple reference operator, often available in closed form, 
%and it remains effective even with approximate Green's functions, as demonstrated by our order-of-magnitude accuracy improvements using free-space kernels for bounded domain problems, 
offering a simple, effective, and interpretable mechanism to counteract the frequency bias in training process.

 Our contributions include:
\begin{itemize}
\item Systematic study of the balance of frequency biases between a NN representation and a differential operator when solving elliptic PDEs using NNs, and show 
how differential operators can dominate and result in undesirable high frequency bias and slow convergence in the training process.
\item
An operator-aware preconditioning strategy that reweighs frequency modes in the formulation and leads to balanced and effective learning across frequency modes.
\item 
Comprehensive numerical experiments demonstrate that our preconditioning strategy can improve both efficiency and accuracy significantly, especially for multiscale problems.
\end{itemize}

The remainder of this paper is organized as follows. Section~\ref{Sec: Formulation} presents the problem formulation and establishes notation for both PDE solving and function approximation tasks. Section~\ref{Sec: Dynamics} provides detailed experimental motivation, demonstrating how frequency bias manifests differently in approximation versus PDE solving, and how factors such as activation functions, network depth, and initial scaling affect spectral learning dynamics. Section \ref{Sec: Preconditioner} proposes our operator-aware preconditioner, elaborating how it modifies the gradient flow to rebalance frequency modes. Section~\ref{Sec:Experiments} presents comprehensive numerical experiments, progressing from simple 1D cases to challenging 2D multiscale problems, demonstrating the effectiveness of our preconditioning approach. Section~\ref{Sec:Conclusion} concludes with a discussion of the implications and potential extensions of this work.

\section{Problem formulation}\label{Sec: Formulation}

We take second-order elliptic equations with Dirichlet boundary conditions as an example. Given a bounded domain $\Omega\subset\mathbb{R}^d$ with sufficiently smooth boundary, we consider the following partial differential equation 
\begin{equation}\label{eq_PDE_problem}
\begin{cases}
    Lu=f&\text{in}~\Omega\\
    u = g&\text{on}~\partial\Omega
\end{cases}\;.
\end{equation}
Here, $L$ is a second-order elliptic differential operator of the form
\begin{equation}\label{eq_elliptic_operator}
Lu = -\sum_{i,j=1}^d \frac{\partial}{\partial x_i}\left(a_{ij}(x)\frac{\partial u}{\partial x_j}\right) + c(x)u,
\end{equation}
that is, there exists a constant $\alpha > 0$ such that 
$$\sum_{i,j=1}^d a_{ij}(x)\xi_i\xi_j \geq \alpha|\xi|^2 \quad \forall x \in \Omega, \xi \in \mathbb{R}^d.$$
The function $u$ is the unknown, and $f\in L^{\infty}(\Omega),g \in L^{\infty}(\partial\Omega)$ are given.

The main idea here is using a NN, denoted by $u_\theta$, to represent the solution and learn the parameters $\theta$ to approximate the PDE and the boundary condition as best as possible. Although using NN representation can potentially offer a few advantages,
such as being mesh-free, easy implementation, scalable to high dimensions, and the key potential
of adaptivity as a nonlinear representation, the parameters have to be learned/trained through a large-scale non-convex
optimization using gradient-based methods in real practice. Moreover, due to the use of nonlinear representation, an appropriate formulation of the optimization problem can be subtle, e.g., whether a strong form or weak form of the elliptic PDE problem~\eqref{eq_PDE_problem} should be used and how to enforce the PDE constraint in the domain interior together with the boundary condition.

%We consider a fully connected neural network (FCNN) with $l$ layers and width $m$, using activation function $\sigma:\mathbb{R} \to \mathbb{R}$ applied element-wise. 

In this work, we will study the simplest and most flexible approach using the least squares formulation to enforce both the PDE and the boundary condition with a balance parameter, termed as the PINN model.  The loss function $\mathcal{L}_{\text{PINN}}(\theta)$ is of the following form
\begin{equation}\label{eq_loss_PINN}
\mathcal{L}_{\text{PINN}}(\theta) = \mathcal{L}_{\text{PINN},r}(\theta)+\lambda_b \mathcal{L}_b(\theta),
\end{equation}
where 
\begin{align}\label{eq_loss_components_PINN}
\mathcal{L}_{ \text{PINN},r}(\theta) &= \frac{1}{2}\int_\Omega |Lu_\theta(x)-f(x)|^2dx=\frac{1}{2}\int_\Omega |L[u_\theta(x)-u(x)]|^2dx,
\\
\mathcal{L}_b(\theta) &= \frac{1}{2}\int_{\partial\Omega}|u_\theta(x)-g(x)|^2ds(x)=\frac{1}{2}\int_{\partial\Omega}|u_\theta(x)-u(x)|^2ds(x).
\end{align}
The loss function is composed of the PDE residual term $\mathcal{L}_{\text{PINN},r}$ and the boundary fitting term $\mathcal{L}_b$, which is equivalent to the square of the $H^2$-norm of the approximation error for $L$ defined in~\eqref{eq_elliptic_operator}. Here $\lambda_b > 0$ is a weighting hyperparameter balancing the two loss components.

% The DRM is inspired by the variational formulation of the equation.  Denote $T:H^1(\Omega)\to L^2(\partial\Omega)$ as the trace operator (See e.g.~\cite{evans2022partial} Section 5.5), then $u\in H_g^1(\Omega)=\{h \in H^1(\Omega)~:~Th = g\}$  is called a \textit{weak solution} for~\eqref{eq_PDE_problem} if it satisfies 
% \begin{equation}\label{eq_PDE_weak}
% 	B[u,v] = \langle f, v\rangle\;, ~\text{for any}~v\in H_0^1(\Omega)\;.
% \end{equation}
% Here the bilinear mapping 
% \begin{equation}\label{eq:var}
% 	B[u,v] :=\sum_{i,j=1}^d\int_{\Omega} a^{ij}(x) \partial_i u\partial_j v\,dx + \int_{\Omega} c(x) uv\,dx 
% \end{equation}
% and $\langle \cdot,\cdot\rangle$ is the pairing of $H^{-1}(\Omega)$ and $H_0^1(\Omega)$. 

% With calculus of variations,  it can be shown that~\eqref{eq_PDE_weak} is equivalent to minimizing
% \begin{equation}
% 	J(u) = \frac{1}{2}B[u,u] -\int_{\Omega} fu\,dx
% \end{equation}
% with the constraint that $u\in H_g^1(\Omega)$. Then the formulation with boundary regularization for DRM is
% \begin{equation}\label{eqn:DRMform}
%     \mathcal{L}_{\text{DRM}}(\theta) =\mathcal{L}_{r, \text{DRM}}(\theta) + \lambda_b \mathcal{L}_b(\theta),
% \end{equation}
% where
% \begin{align}\label{eq_loss_components_DRM}
% % \mathcal{L}_r(\theta) &= \frac{1}{N_r}\sum_{i=1}^{N_r}|Lu_\theta(x_i^r)-f(x_i^r)|^2,\\
% \mathcal{L}_{r, \text{DRM}}(\theta) &=J(u_\theta),\\
% \mathcal{L}_b(\theta) &= \int_{\partial\Omega}(u_\theta(x)-g(x))^2ds(x).
% \end{align}
% Other formulations for enforcing boundary conditions will be discussed in the next section.

As a comparison in our study, the most commonly used least squares formulation for direct function approximation using a NN representation is of the form
\begin{equation}\label{eq_loss_approx}
\mathcal{L}_{\text{approx}}(\theta) = \frac{1}{2}\int_\Omega |u_\theta(x)-u(x)|^2 dx,
\end{equation}
which is the square of the $L^2$ norm of the approximation error.

In practice, these integrals are approximated by discrete sampling. For example, interior points $\{x_i^r\}_{i=1}^{N_r} \subset \Omega$ and boundary points $\{x_i^b\}_{i=1}^{N_b} \subset \partial\Omega$ are drawn randomly from uniform distributions over the domain and its boundary, respectively. The loss is then evaluated at these sampled points to approximate the continuous integral. The optimization is typically performed using gradient-based methods such as Adam~\cite{2015-kingma} or L-BFGS, seeking
\begin{equation}\label{eq_optimization}
\theta^* = \arg\min_{\theta \in \mathbb{R}^p} \mathcal{L}_{\text{F}}(\theta),
\end{equation}
for F$\in \{ \text{PINN}, \text{approx}\}$.

\section{Training Dynamics}\label{Sec: Dynamics}
Once the optimization formulation is determined, the most challenging computational task is the training process for the parameters to reach a satisfactory result. To solve this large-scale non-convex optimization problem with no clear interpretation or special structures in a very high dimensional parameter space, the training process typically relies on gradient-based optimization methods. A key issue is to understand the potential ill-conditioning in the optimization problem, which will lead to both inefficiency and bias for gradient-based methods. 

We start with gradient flow using the continuous formulation. For the function approximation problem~\eqref{eq_loss_approx}, the gradient flow in the parameter space is

\begin{align}
 \frac{d\theta}{dt} = -\int_{\Omega} \frac{\partial u_\theta}{\partial \theta}  ( u_\theta - u)dx,
    \\
    \frac{d\mathcal{L}_{\text{approx}}(\theta)}{dt}=-\left\|\int_{\Omega}  \frac{\partial u_\theta}{\partial \theta}( u_\theta - u) dx\right\|^2.
\end{align}

For function approximation using two layer NNs, it was shown in~\cite{zhang2025shallow} that strong correlations among the set of parametrized activation functions lead to ill-conditioning of the NN representation and result in low frequency basis in training and convergence. For multi-layer NNs, the situation is much more difficult to explicitly characterize and analyze since the answer depends on a more complicated composition structure as well as the choice of activation function and the initial scaling of the parameters. For example, a multi-component and multi-layer neural network structure~\cite{zhang2025structured} using Sine activation function, called FMMNN~\cite{zhang2025fourier}, with proper initial scaling was shown to have strong representation capability and flexibility through smooth composition which also leads to weak frequency bias in training.  

When solving the PDE~\eqref{eq_PDE_problem} using the PINN formulation~\eqref{eq_loss_PINN}, we have
\begin{align*}
    \frac{d\theta}{dt} =-\int_\Omega [L\frac{\partial u_\theta}{\partial \theta}][L  ( u_\theta - u )]dx -\lambda_b\int_{\partial\Omega}\frac{\partial u_\theta}{\partial \theta}  ( u_\theta - u)ds(x),
    \\
    \frac{d\mathcal{L}_{\text{PINN}}}{dt}= -\left\Vert \int_\Omega [L\frac{\partial u_\theta}{\partial \theta}] [L  ( u_\theta - u )]dx +\lambda_b\int_{\partial\Omega}\frac{\partial u_\theta}{\partial \theta}  ( u_\theta - u)ds(x)  \right\Vert^2.
\end{align*}
We see that the involvement of a differential operator $L$ in the loss function~\eqref{eq_loss_PINN} changes the frequency weights in the approximation error with a high frequency bias (in terms of $H^2$ norm). As a consequence, it changes the training dynamics as above. However, there is a subtle competition between the typical low frequency bias of a NN representation and the high frequency bias of a differential operator. The intriguing and important question is the balance between the two and the overall conditioning and frequency bias for the training dynamics when solving PDEs using a NN representation. More importantly, how to counter the frequency bias or precondition the formulation is crucial for both training efficiency and resulting accuracy. 

In the recent work~\cite{he2025can}, it was shown that, when solving elliptic PDE with a two-layer Neural network, the low-frequency bias of the NN representation still dominates. As a result,
low-frequency or smooth components of the solution are captured first during the training process, while high-frequency or fine features are more costly and difficult to compute. As a conclusion, due to the low-frequency bias, using two-layer NNs is difficult to achieve the key potential of adaptivity as a nonlinear representation and is not competitive with well-developed linear method representations such as the finite element method. 

In this work, we will focus on the study of using multi-layer NNs to solve elliptic PDEs of the form~\eqref{eq_PDE_problem}. In particular, we show that, for well-designed multi-layer NNs, which have strong representation capability and mild frequency bias, the ill-conditioning and high frequency bias of differential operators will dominate the overall formulation and lead to a nightmare for the gradient-based training process, capturing low frequency and smooth components of the solution very slowly, just as using iterative methods without preconditioning for finite element methods. To address this issue, we propose a simple and effective \textbf{operator-aware preconditioning} strategy that can improve both the efficiency and accuracy notably.

In the following, we first illustrate the behavior of frequency bias in different settings with numerical experiments. For each example, we track the evolution of the coefficients of different frequency modes in the solution error during training iterations. We then analyze the learning dynamics of these frequency modes in the neural network representation $u_\theta$ and examine their dependence on factors such as the activation function, network depth structure, initial scaling, and the nature of the task (e.g., function approximation or PDE solving). In the experiments presented in this section, we focus on the frequency dynamics during the early stage of training (the first 2,000 iterations). The purpose is not to analyze the plateau phase or compare architectures in terms of final accuracy, but rather to illustrate the presence of frequency bias and how frequency bias affects the rate at which the $L^2$ error decreases during training. Furthermore, our experiments are conducted with over-sampled data, meaning that the sample points sufficiently resolve the target function.

\subsection{Frequency bias for function approximation}

In Examples 1 and 2, we examine the effects of various factors, such as activation function, initial scaling, and depth, on frequency bias in function approximation and demonstrate that reduced frequency bias generally correlates with improved convergence and accuracy.

\textbf{Example 1}: The goal of this example is to examine how the choice of activation functions and the initial scaling of the first layer slope parameters affect frequency bias in function approximation.

Following \cite{zhang2025shallow}, we apply an initial first-layer scaling strategy to reduce frequency bias and improve learning efficiency. After standard initialization (e.g., PyTorch's default), the slope weights $w$ in the activation function $\sigma(w(x-b))$ of the first layer are scaled by a factor that is proportional to $(n_1)^{\frac{1}{d}}$, where 
$n_1$ is the first layer network width and $d$ is the input dimension. This scaling provides the largest frequency range supported by the number of neurons in the first layer, capturing diverse frequency modes in the data at the beginning of training. Subsequent layers keep standard initialization to avoid instability due to amplification across layers.

We first consider the standard fully connected neural networks (FCNN) with width 50 and depth 3, corresponding to 5,201 learnable parameters, trained for the approximation of the target function  $$u_1(x) = \sin(2\pi x) + \sin (5\pi x) + \sin(9\pi x)$$ in $L^2$ norm over the interval $[-1,1]$ with 2,000 sample points. The optimization uses the Adam algorithm with a cyclic learning rate schedule, where the learning rate oscillates between a base value of $\eta_{min} =1\times 10^{-5} $
 and a maximum value of $\eta_{max} = 2\times 10^{-3}$. An exponential decay factor $\gamma = 0.999$ is applied to both bounds, gradually reducing the overall learning rate range throughout training.
% The total training time for this experiment is approximately 17 seconds.

To investigate the spectral learning behavior of the network, we study the error in different Fourier modes during training process. 
% Using the pointwise error evaluated on a uniform grid over the domain 
% $[-1,1]$, We compute the discrete Fourier transform of the error using fast Fourier transform and examine the magnitudes of the resulting Fourier coefficients as training progresses.
In this work, a frequency mode labeled by $m$ refers to the Fourier component of the error at frequency 
$m$, measured in cycles per unit length. The analytic components of the form 
$\sin(k\pi x)$ correspond to frequency mode $m=k/2$. thus, the target solution components in the exact solution $u_1$ are associated with frequency modes $1$, $2.5$ and $4.5$, respectively.
The corresponding curve shows the evolution of the magnitude of this Fourier component over training iterations.
\begin{figure}
\centering
\begin{tabular}{ccc}
(a)&(b)&(c)\\
\includegraphics[width=0.28\textwidth]{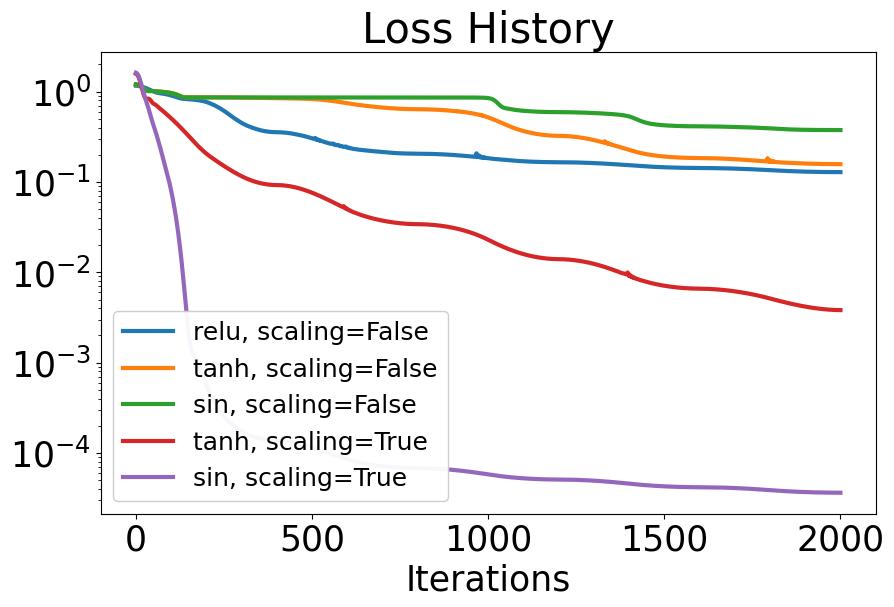}&
\includegraphics[width=0.28\textwidth]{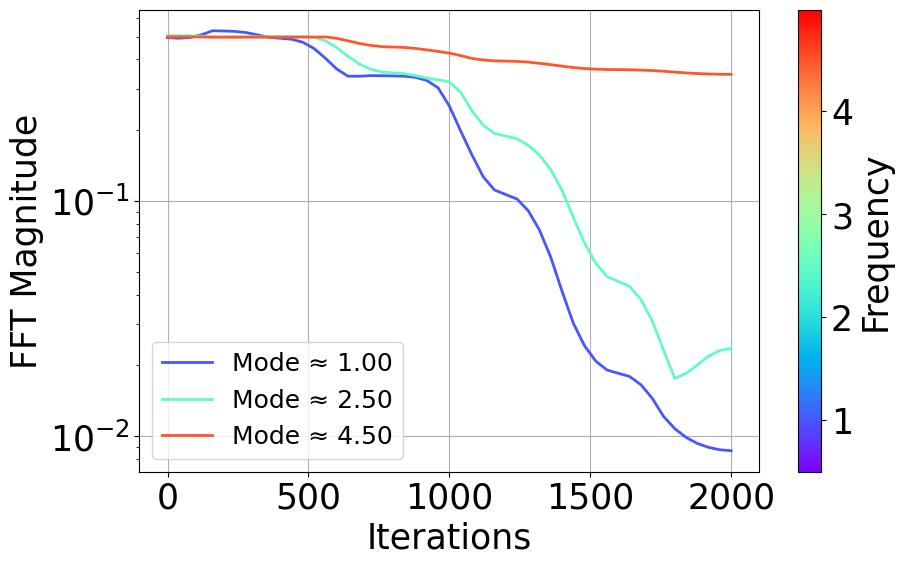}&
\includegraphics[width=0.28\textwidth]{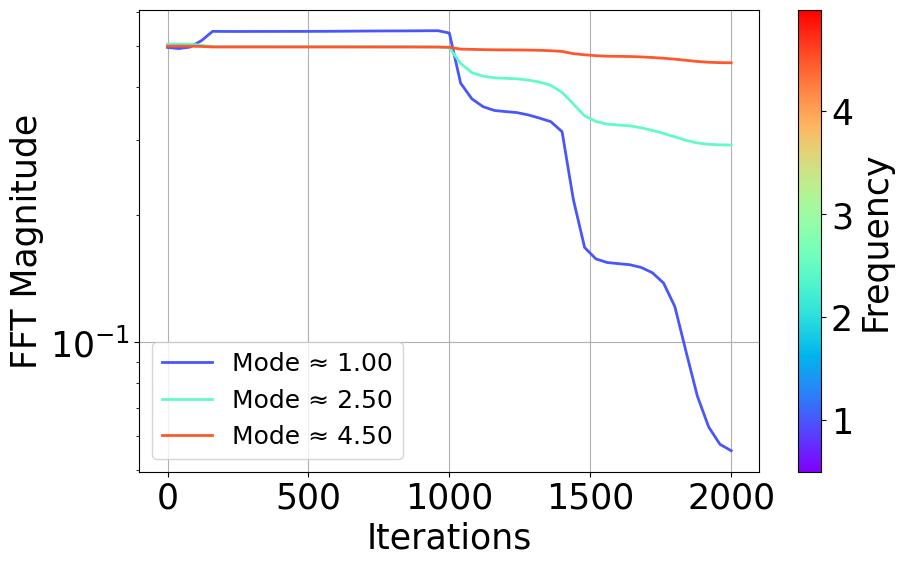}\\
(d)&(e)&(f)\\
\includegraphics[width=0.28\textwidth]{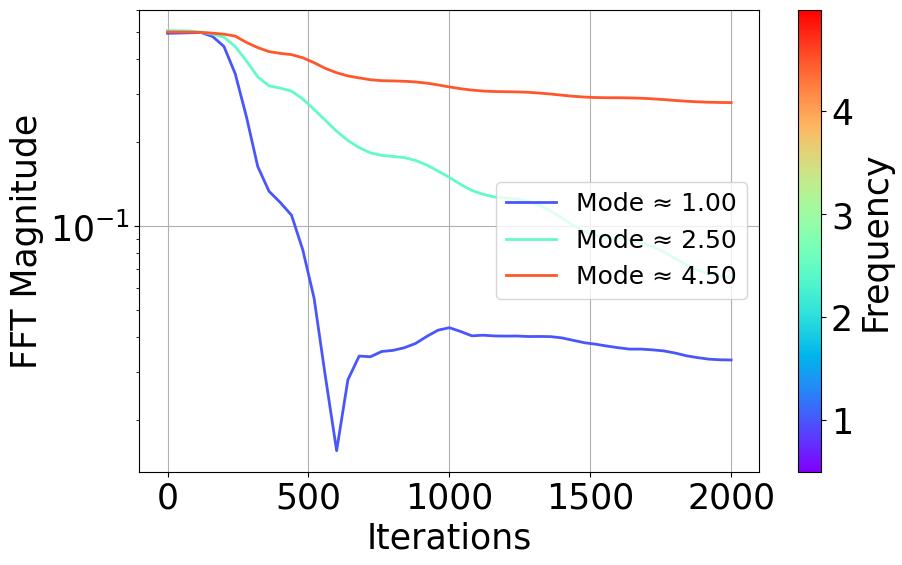}&
\includegraphics[width=0.28\textwidth]{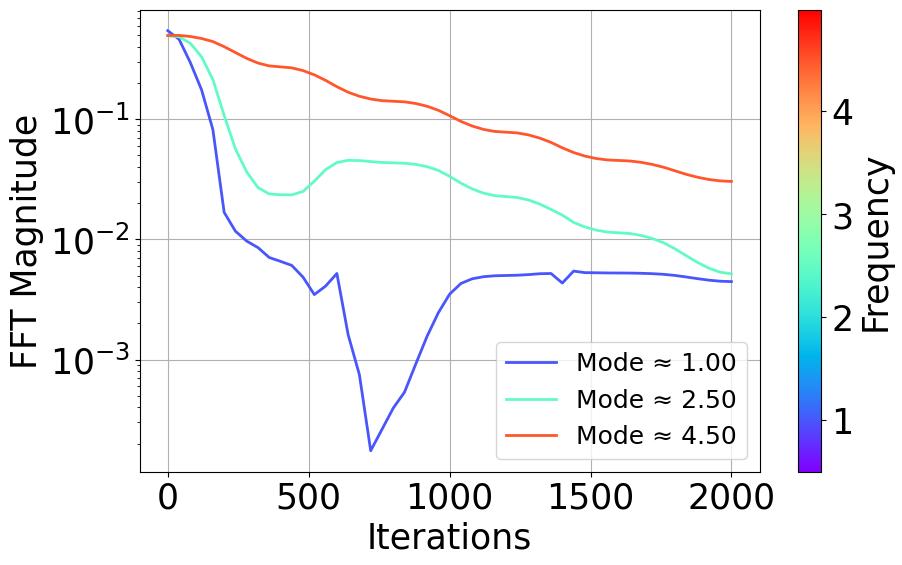}&
\includegraphics[width=0.28\textwidth]{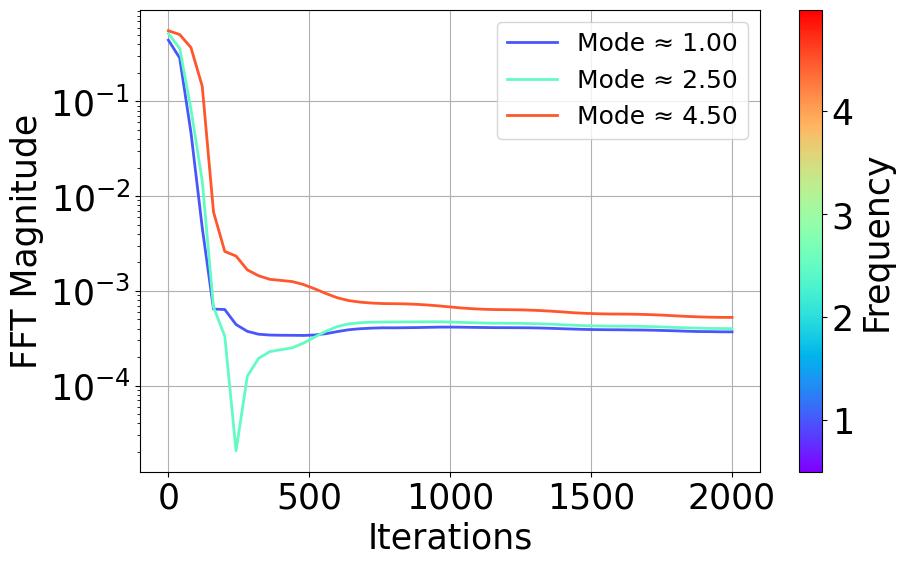}
\end{tabular}
\caption{Example 1: Relative $L^2$ error and frequency bias in function approximation using different activation functions, with and without scaling. (a) Relative $L^2$ error during training; (b)-(f) Errors in different modes during training corresponding to activation functions Tanh, Sine, ReLU,  scaled Tanh, and scaled Sine, respectively.} \label{fig:example1_freq_comparison}
\end{figure}

The results are shown in Fig.~\ref{fig:example1_freq_comparison}. We observe that, without initial scaling, the error decay is slow and Sine or Tanh as activation function does not perform as well as \ReLU as activation function. This can be explained by the spectral analysis in~\cite{zhang2025shallow} for shallow NNs, i.e., the spectral decay of the Gram matrix for a family of randomly parameterized activation functions. Without scaling, the spectral decay is exponential for smooth activation functions while polynomial for \ReLU. However, with a proper scaling of the slope parameters, the family of randomly parameterized activation functions provides a more diverse and less correlated basis whose Gram matrix has a slow spectral decay for a range of frequency modes that are proportional to the scaling and hence can capture more features of the underlying data as shown in~\cite{zhang2025shallow}. In particular, using properly scaled Sine in the first layer is the most efficient, like using a family of diverse random Fourier bases that are the least correlated global functions, as shown in~\cite{zhang2025fourier,tang2025structured} as well as observed in this test. Note that ReLU and its powers are invariant under scaling. 
%coincides with the observation in \cite{tang2025structured} and \cite{zhang2025fourier} that scaling the first layer weights is helpful, especially for {Sine} activation function.

 As for the frequency bias, we have all three NNs without scaling first layer bias towards low frequency; that is, the errors in the lower frequency modes decay faster in the training process. 
With scaling, NN with Sine activation function exhibits little bias towards low frequency. This also leads to a more efficient training process.

\textbf{Example 2}: The second test investigates how network depth influences frequency bias under otherwise similar conditions.

We use the same target function 
$u_1$ and compare three architectures: a shallow (2-layer), a 4-layer, and a 6-layer FCNN, each with a width of 100, corresponding to 301, 20,501, and 40,701 learnable parameters, respectively. For networks using Tanh or Sine activations, the same first-layer scaling is applied. The model is optimized using the Adam algorithm with a cyclic learning rate schedule, where the learning rate oscillates between a base value of $\eta_{min} =1\times 10^{-5} $
 and a maximum value of $\eta_{max} = 1\times 10^{-2}, 5\times 10^{-3}$, and $3\times 10^{-3}$ for 2-layer, 4-layer, and 6-layer NNs respectively with an exponential decay factor $\gamma = 0.999$.
the total training time is approximately 16 seconds for all 2-layer networks, 20 seconds for all 4-layer networks, and 22 seconds for all 6-layer networks.

% \begin{figure}
% \centering
% \begin{tabular}{ccc}
% (a)&(b)&(c)\\
% \includegraphics[width=0.28\textwidth]{figures/example2_selected_freq_Approx_relu_shallow.jpg}&
% \includegraphics[width=0.28\textwidth]{figures/example2_selected_freq_Approx_tanh_shallow.jpg}&
% \includegraphics[width=0.28\textwidth]{figures/example2_selected_freq_Approx_sin_shallow.jpg}\\
% (d)&(e)&(f)\\
% \includegraphics[width=0.28\textwidth]{figures/example2_selected_freq_Approx_relu_deep.jpg}&
% \includegraphics[width=0.28\textwidth]{figures/example2_selected_freq_Approx_tanh_deep.jpg}&
% \includegraphics[width=0.28\textwidth]{figures/example2_selected_freq_Approx_sin_deep.jpg}
% \end{tabular}
% \caption{Example 2: Selected dynamic mode behavior in the error for shallow (top row) and deep (bottom row) models using $\ReLU$, Tanh, and Sine activations.} \label{fig:example2_freq_comparison}
% \end{figure}

\begin{figure}[H]
\centering
\begin{tabular}{ccc}
(a) & (b) & (c)\\
\includegraphics[width=0.28\textwidth]{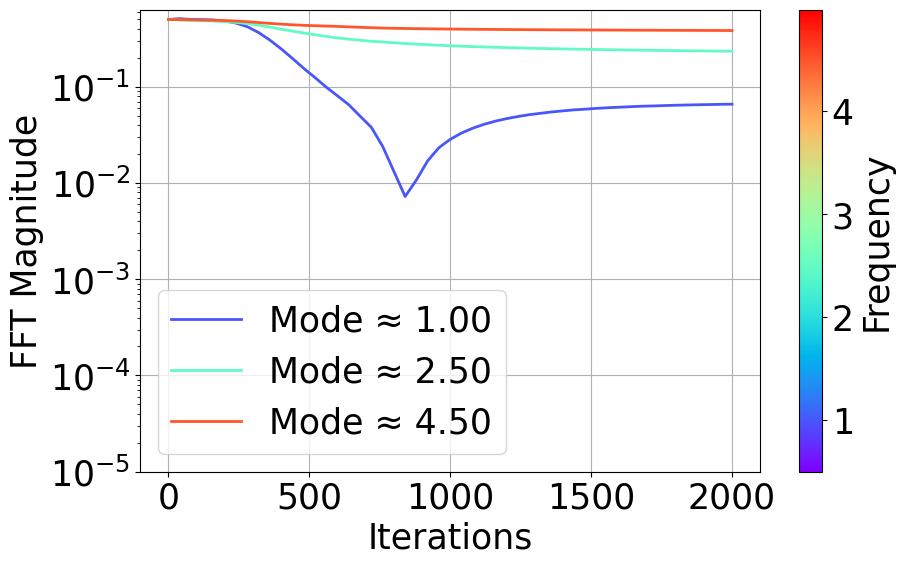} &
\includegraphics[width=0.28\textwidth]{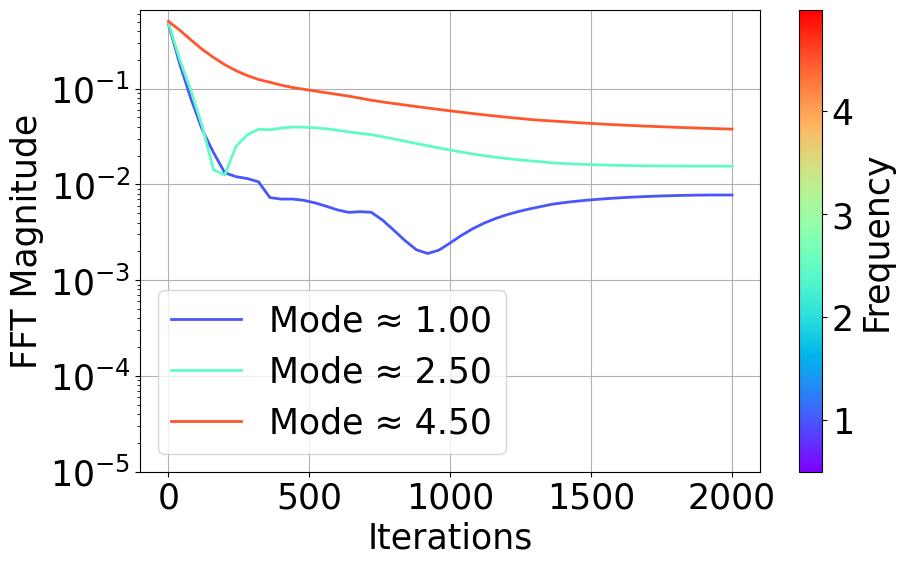} &
\includegraphics[width=0.28\textwidth]{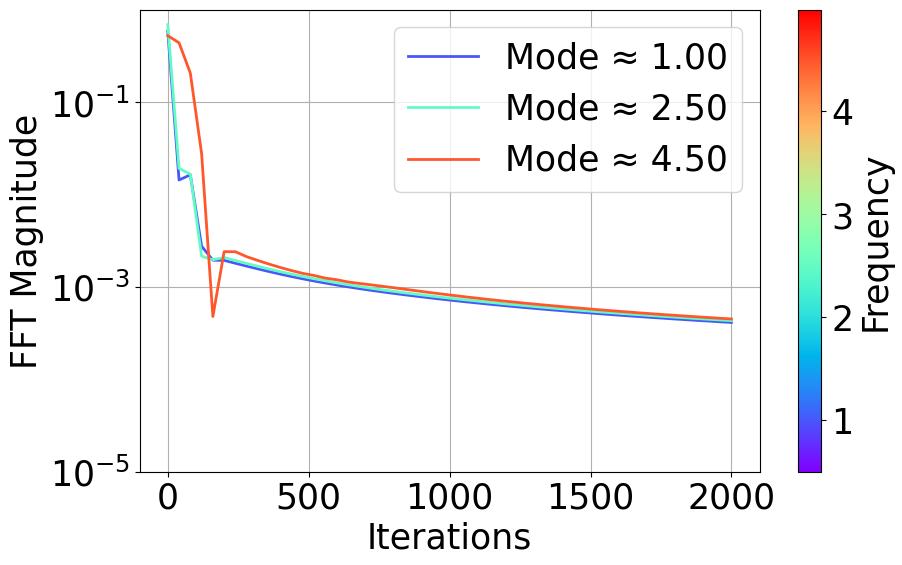}\\
(d) & (e) & (f)\\
\includegraphics[width=0.28\textwidth]{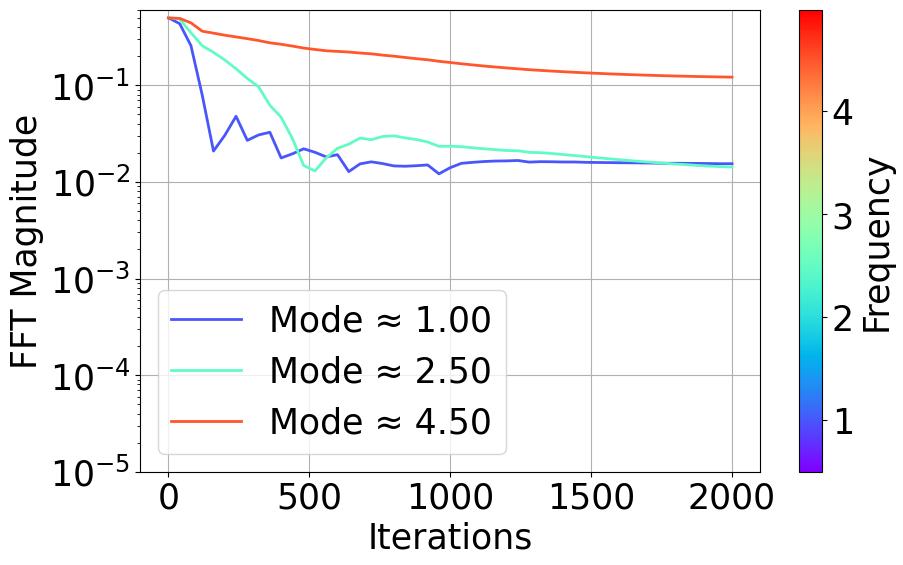} &
\includegraphics[width=0.28\textwidth]{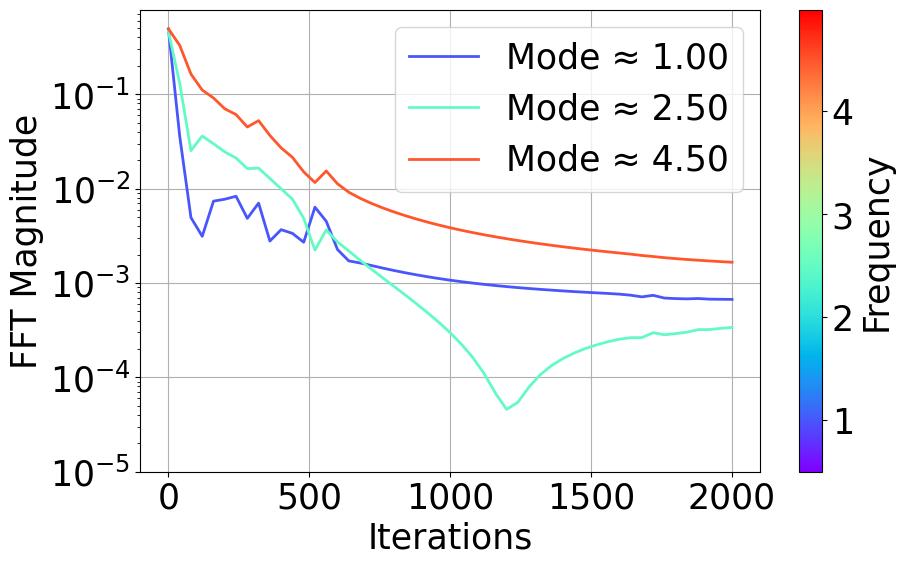} &
\includegraphics[width=0.28\textwidth]{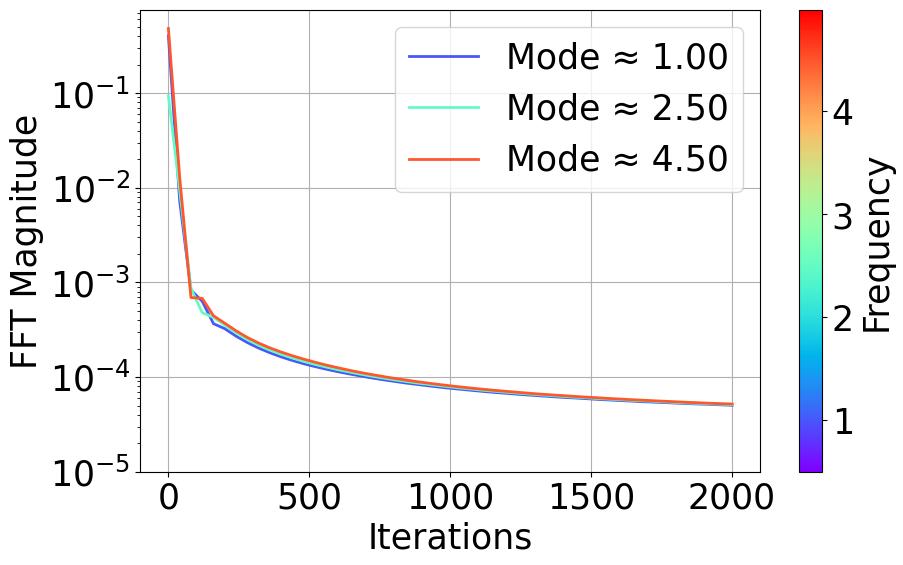}\\
(g) & (h) & (i)\\
\includegraphics[width=0.28\textwidth]{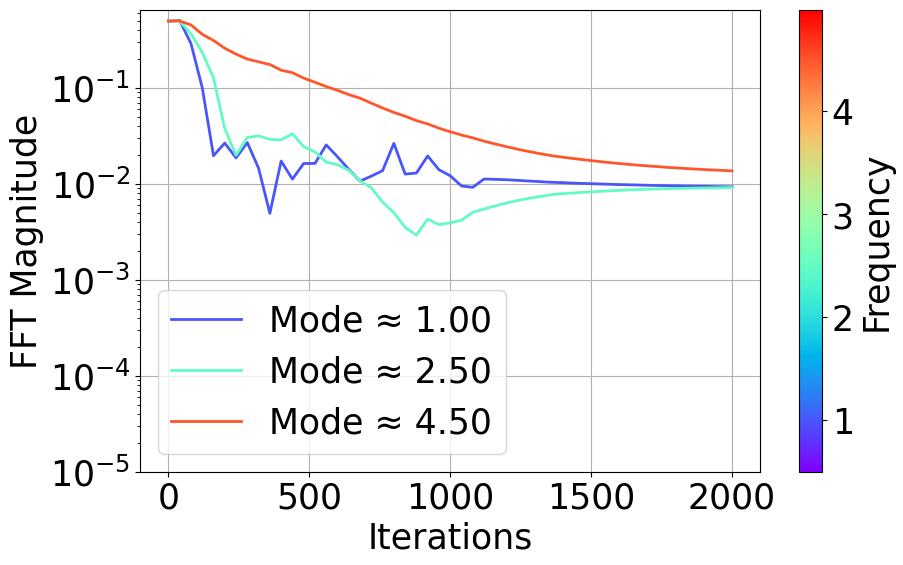} &
\includegraphics[width=0.28\textwidth]{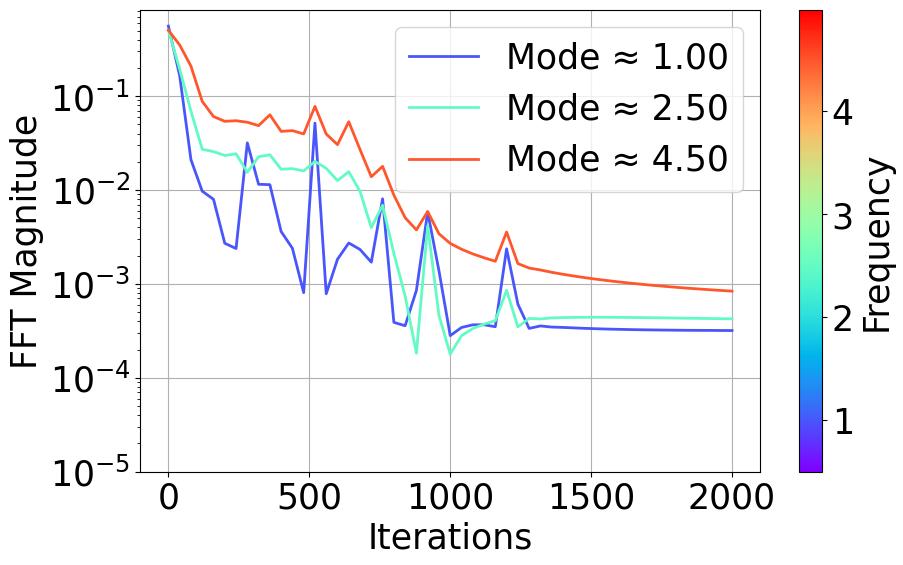} &
\includegraphics[width=0.28\textwidth]{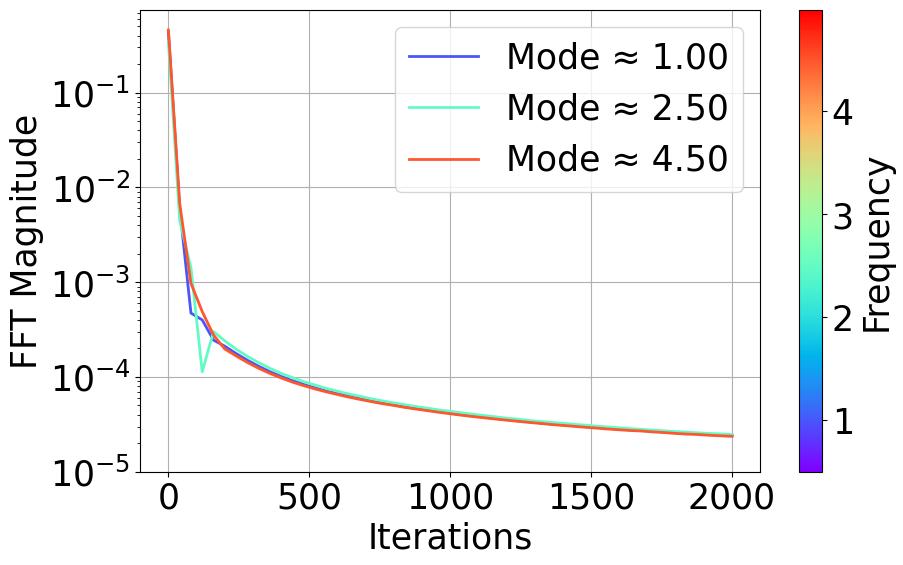}
\end{tabular}
\caption{Example 2: Errors in different modes for a 2-layer (top row), 4-layer (middle row), and 6-layer (bottom row) neural network using $\ReLU$ (left column), Tanh (middle column), and Sine (right column) activation functions.}
\label{fig:example2_freq_comparison}
\end{figure}

As the number of layers increases, the neural networks exhibit reduced frequency bias and improved overall $L^2$ accuracy in the early training stage.
For $\ReLU$ activations (Figs.~\ref{fig:example2_freq_comparison}(a), \ref{fig:example2_freq_comparison}(d), and \ref{fig:example2_freq_comparison}(g)), networks with more layers show a marked reduction in frequency bias. The shallow network displays a strong bias toward low-frequency modes; that is, Mode 1 converges much faster than Mode 4.5. The 4-layer and 6-layer networks achieve more balanced learning, with all modes decaying to around $10^{-2}$ error within 2,000 iterations.
For Tanh activations with scaling (Figs.~\ref{fig:example2_freq_comparison}(b), \ref{fig:example2_freq_comparison}(e), and \ref{fig:example2_freq_comparison}(h)), all models exhibit some frequency bias, but increasing the number of layers improves accuracy by roughly an order of magnitude (reaching $10^{-3}$ vs. $10^{-2}$) and yields more uniform convergence across modes.
For {Sine} with scaling (Figs.~\ref{fig:example2_freq_comparison}(c), \ref{fig:example2_freq_comparison}(f), and \ref{fig:example2_freq_comparison}(i)), the shallow network already shows good performance with minimal frequency bias, but the deep network achieves exceptional results with all modes converging to $10^{-4}$
 error or better. 

 Here is the summary of a few key observations from our function approximation tests above.
 \begin{itemize}
\item Less frequency bias leads to improved efficiency and accuracy of training process.
\item The level of frequency bias depends on NN structure, activation function, and initial scaling. The increase of depth, proper initial scaling, and use of Sine activation function all lead to lesser low frequency bias and improved efficiency for function approximation.
\end{itemize}

 \subsection{Frequency bias in solving PDEs}
We now investigate how frequency bias behaves when NNs are trained to solve PDEs, and how this behavior differs from that observed in direct function approximation. Using the Poisson equation with zero Dirichlet boundary conditions as a test case, we study the overall frequency bias for the learning dynamics in different settings. 

The training dynamics in solving PDE can be significantly different from those observed in function approximation, depending on the balance between the opposite frequency biases of the differential operator and the NN representation. In other words, the presence of the differential operator alters weights for different frequency components in the error and may lead to different frequency bias during training, resulting in distinct patterns of convergence in $L^2$ error. When using two-layer NNs, it was shown~\cite{he2025can} that low frequency bias of NN representation still dominates and results in slow convergence and low accuracy of the training process when the solution contains high frequency components. However, we show below that the situation for multi-layer networks can be quite subtle and different. 
% The more capable and less frequency bias a multi-layer NN is, which depends on the depth, activation function, and initial scaling as shown in the previous section, the more dominant the differential operator and its frequency bias are in the formulation and training process.
The more expressive a multi-layer NN is and the weaker the intrinsic frequency bias, which depends on the depth, activation function, and initial scaling as shown in the previous section, the more dominant the differential operator and its frequency bias are in the formulation and training process.
Nevertheless, our tests show that, although frequency bias may vary depending on the settings, a weaker overall frequency bias still leads to improved convergence and accuracy of the training process.

In the function approximation setting, we previously examined $\ReLU$ as the activation function. For PINN, we need to use at least $\ReLU^3$ as the activation function, which has a second derivative proportional to $\ReLU$. Thus, we compare using $\ReLU^3$ for both solving PDEs and function approximation.
%  \textbf{Example 3(a)}:
% We use the FCNN with width 200 and depth 3, and  4000 points for training. The PDE is $-\Delta u = f$ in $\Omega =(-1,1)$ with $u=0$ on $\partial \Omega$, where $f$ is chosen such that $u_1$ is the exact solution.

% \begin{figure}
% \centering
% \begin{tabular}{ccc}
% (a)&(b)&(c)\\
% \includegraphics[width=0.28\textwidth]{figures/example3_approximation_all_models.jpg}&
% \includegraphics[width=0.28\textwidth]{figures/example3_selected_freq_Approx_relucub.jpg}&
% \includegraphics[width=0.28\textwidth]{figures/example3_selected_freq_Approx_sin.jpg}\\
% (d)&(e)&(f)\\
% \includegraphics[width=0.28\textwidth]{figures/example3_training_L2_loss_all_models.jpg}&
% \includegraphics[width=0.28\textwidth]{figures/example3_selected_freq_PINN_relucub.jpg}&
% \includegraphics[width=0.28\textwidth]{figures/example3_selected_freq_PINN_sin.jpg}
% \end{tabular}
% \caption{Example 3(a): Comparison of training, approximation, and selected frequency responses. (a) NN output plot for different tasks; (b) Approximation $\ReLU^3$; (c) Approximation with Sine activation function;(d)Relative $L^2$ error history; (e) PINN $\ReLU^3$; (f) PINN Sine. }
%     \label{fig:example3_overview}
% \end{figure}

  \textbf{Example 3}: 
We employ 4-layer FCNNs with a width of 200 and 4,000 sample points for this example. The network has 
81,001 learnable parameters. The exact solution $u_2$ to the Poisson equation is
\[
\begin{aligned}
u_2(x) = 1000 \, \chi(x) \big[ &
\sin\!\left(60(x + 0.2)^2\right) e^{-80(x + 0.2)^2} \\
& + 0.8\, \sin\!\left(300x^2\right) e^{-50x^2} \\
& + 0.6\, \sin\!\left(40(x - 0.2)^2\right) e^{-60(x - 0.2)^2} 
\big],
\end{aligned}
\]
where the smooth cutoff function \( \chi(x) \) is defined as:
\[
\chi(x) =
\begin{cases}
\exp\left( -\dfrac{1}{(x + 0.6)^2 (0.6 - x)^2} \right), & \text{if } -0.6 < x < 0.6 \\
0, & \text{otherwise.}
\end{cases}
\]
% For the first 2,000 epochs, the total training time is 25 seconds for direct approximation with 
% $\ReLU$ activation, 22 seconds for direct approximation with Sine 
% % \ROY{The format is not consistent \texttt{Sine} versus Sine} 
% activation, 43 seconds for PINN with 
% $\ReLU^3$, and 38 seconds for PINN with Sine activation. This solution contains significantly higher frequency components compared to $u_1$, making it a more challenging test for approximation and PDE solving tasks.

\begin{figure}
\centering
\begin{tabular}{ccc}
(a)&(b)&(c)\\
\includegraphics[width=0.28\textwidth]{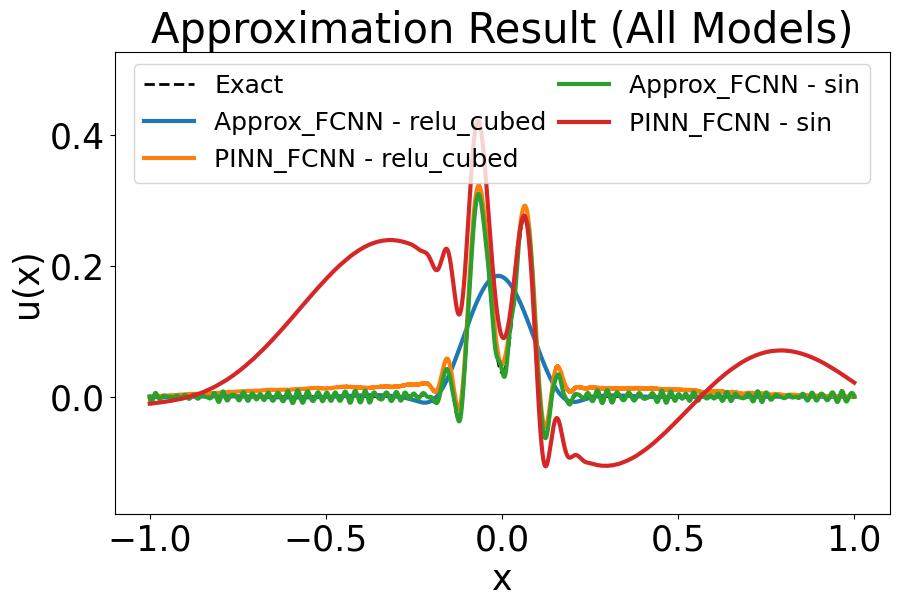}&
\includegraphics[width=0.28\textwidth]{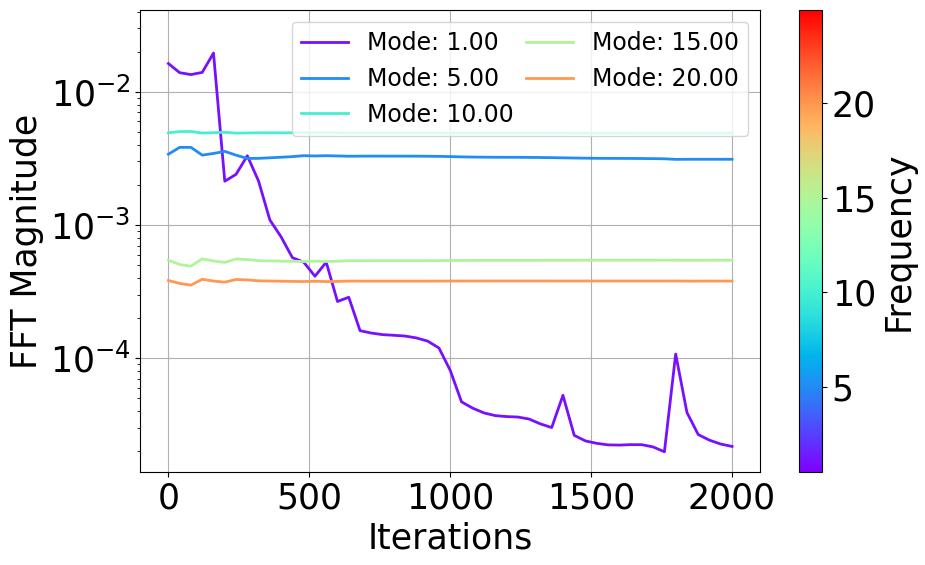}&
\includegraphics[width=0.28\textwidth]{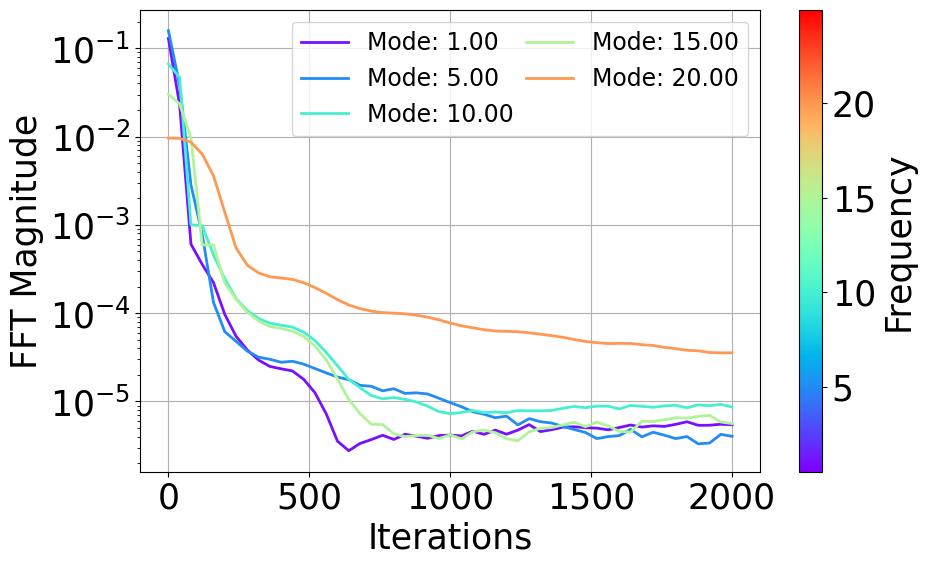}\\
(d)&(e)&(f)\\
\includegraphics[width=0.28\textwidth]{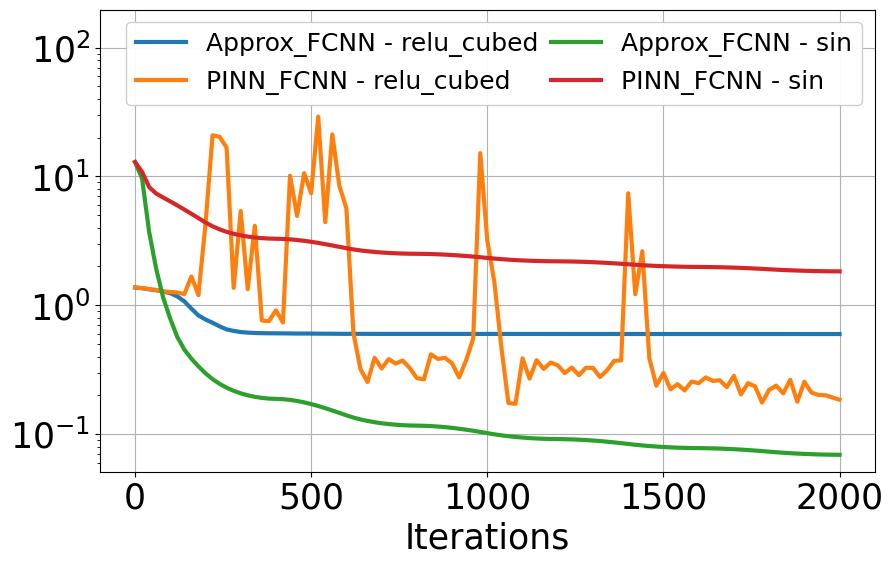}&
\includegraphics[width=0.28\textwidth]{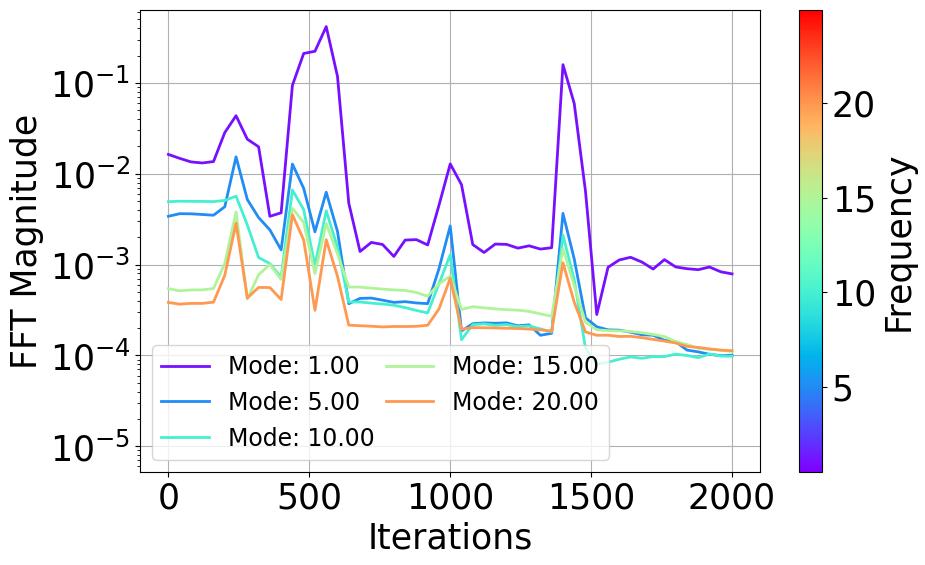}&
\includegraphics[width=0.28\textwidth]{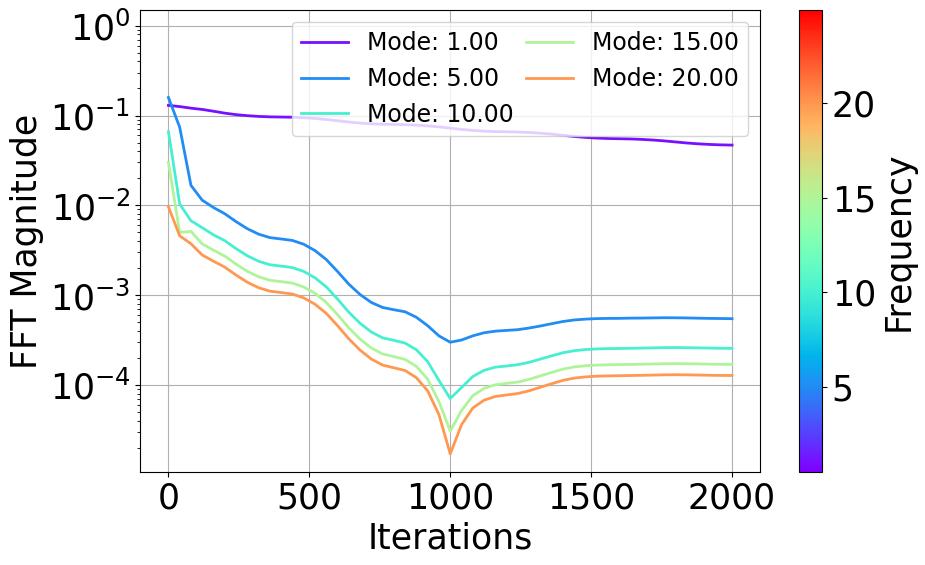}
\end{tabular}
    \caption{Example 3: Comparison of training performance and selected frequency responses.
(a) Neural network outputs at 2,000 epoch for different settings;
(b) Approximation using $\ReLU^3$ activation;
(c) Approximation using Sine activation;
(d) Relative $L^2$ error history;
(e) PINN with $\ReLU^3$ activation;
(f) PINN with Sine activation. }
    \label{fig:example3b_overview}
\end{figure}
 
Comparing approximation using $\ReLU^3$ (Fig.~\ref{fig:example3b_overview}(b)) with PINN using $\ReLU^3$ (Fig.~\ref{fig:example3b_overview}(e)), we see that 1) the NN representation has a pretty strong low frequency bias, and 2) the frequency bias is overturned in PINN due to the overall balance between the low frequency bias of the NN representation and the prevailed high frequency bias of the differential operator. We also observe that the two opposite frequency biases almost offset each other in PINN, which leads to a better overall frequency balance in training and a faster error decay than direct function approximation in $L^2$ norm as shown in Fig.~\ref{fig:example3b_overview}(d). 
%Fig.~\ref{fig:example3b_overview}(e) also indicates the differential operator doesn't just partially cancel the network's intrinsic low-frequency bias, instead, it overcorrects and produces a spectral bias toward high frequencies. This behavior is indicated by the rapid convergence of Mode 20.0 compared with Mode 1.0.
Note that the relative $L^2$ error for PINN exhibits an oscillatory behavior because it is different from the loss function for PINN, which is equivalent to $H^2$ error.
Actually, the overall PINN loss decreases smoothly on average, indicating that this oscillation is not caused by an excessively large learning rate.

With Sine activation function and initial scaling, the multi-layer network is much more capable in representation, which is evidenced by its minimal frequency bias as shown in Fig.~\ref{fig:example3b_overview}(c) and a good approximation result as shown in Fig.~\ref{fig:example3b_overview}(d). However, this means dominance from the differential operator in PINN model, which leads to a pronounced high frequency bias in the training process, as shown in Fig.~\ref{fig:example3b_overview}(f) and results in a slower decay in $L^2$ error compared to the direct approximation as shown in Fig.~\ref{fig:example3b_overview}(d).

Table \ref{tab:freqbias_pde} summarizes these observations.  The numerical experiments indicate that the eventual training performance correlates with the overall frequency bias as a balance of network representation and differential operator. %When the architecture exhibits a strong low-frequency bias in approximation, such as when $\ReLU^3$ is the activation function, the differential operator's high-frequency bias overcorrects, leading to faster high-frequency learning and overall better $L^2$ accuracy. When the architecture has minimal bias (such as with the Sine activation function), the differential operator creates severe high-frequency bias and worse $L^2$ accuracy. 

 Notably, we frequently observe that the PINN loss continues to decrease while the $L^2$ error stagnates or even grows, signaling a misalignment between the optimization objective and the target error metric. We attribute these phenomena to the frequency weighting induced by the differential operator in the PINN loss. PINNs minimize a residual norm (e.g., $\Vert \Delta u_\theta-f\Vert_{L^2(\Omega)}=\Vert \Delta(u_\theta-u)\Vert_{L^2(\Omega)}\approx \Vert u_\theta-u\Vert_{H^2(\Omega)}$), that is not directly aligned with the solution error $\Vert u_\theta-u\Vert_{L^2(\Omega)}$. In the frequency domain, a differential operator 
amplifies higher frequency modes, thereby emphasizing high-frequency residuals. This operator-induced bias can counteract, or even reverse the spectral tendency of the network representation, explaining the divergent behaviors noted above.

\begin{table}
\centering
\caption{Frequency bias in different settings (Example 3).}
\label{tab:freqbias_pde}
\resizebox{0.9\textwidth}{!}{%
\begin{tabular}{c|c|c|c|c}
\hline
 \textbf{Task} & \textbf{Activation} & \textbf{Width/Depth} & \textbf{Frequency Bias} & \textbf{$L^2$ Error after 2K epochs} \\
\hline
\hline
\multicolumn{5}{c}{\textit{Intrinsic Bias from Approximation}} \\
\hline
 Approx & $\ReLU^3$ & 200 / 4 & Low freq bias & 6.00E-01 \\
Approx & Sine (scaled) & 200 / 4 & \textbf{Minimal} & \textbf{6.91E-02} \\
\hline
\multicolumn{5}{c}{\textit{Freq bias in PDE solving (PINN)}} \\
\hline
PINN & $\ReLU^3$ & 200 / 4 & Mild high freq bias & 1.85E-01 \\
PINN & Sine (scaled) & 200 / 4 & High freq bias & 1.83E+00 \\
\hline
\end{tabular}%
}
\end{table}

\section{Operator-aware Preconditioner for PINN}\label{Sec: Preconditioner}
 Based on the insight and observation we gained in previous studies, we propose a preconditioner for PINN that would counteract the frequency bias induced by the differential operator and recover the direct function approximation metric 
 with the help of a fixed integral kernel in the loss function.

Recall that a PINN model is trained by minimizing the loss function 
\begin{equation}\nonumber
\mathcal{L}_{\text{PINN}}(\theta) = \mathcal{L}_{\text{PINN},r}(\theta)+\lambda_b \mathcal{L}_b(\theta).
\end{equation}
We propose to minimize the following loss function instead:
\begin{equation}\label{form:preconditionPINN}
\mathcal{L}_{G}(\theta) = \mathcal{L}_{G,r}(\theta)+\lambda_b \mathcal{L}_b(\theta),
\end{equation}
where 
 \begin{align}
\mathcal{L}_{G,r}(\theta)
&= \frac{1}{2}\int_\Omega \left|\int_\Omega G(x,y)\left[Lu_\theta(y) - f(y)\right]dy \right|^2  dx
\label{eq: loss gr}
\\
&= \frac{1}{2} \int_\Omega\!\left|\int_\Omega G(x,y)L\left[u_\theta(y) - u(y)\right]dy\right|^2  dx
\nonumber
\end{align}
and $G(x,y)$ denotes the Green's function for a reference elliptic operator $L_0$ which is spectrally equivalent to $L$. In practice, we choose $G(x,y)$ to be the Green's function of the Laplacian operator with either homogeneous Dirichlet boundary condition for simple domains or in free-space, for which analytical expressions are available. For the more complex operator $L$ in equation \eqref{eq_PDE_problem}, this choice still provides effective preconditioning while maintaining computational tractability, as will be demonstrated through numerical experiments.

%\subsection{Gradient Flow Dynamics and Learning Operator}
We denote by $\G$ the integral operator  $\G f(x):= \int_{\Omega} G(x,y)f(y)dy$. The corresponding continuous gradient flow becomes
\begin{align*}
    \frac{d\theta}{dt} =-\int_\Omega [\G L\frac{\partial u_\theta}{\partial \theta}][\G L  ( u_\theta - u )]dx -\lambda_b\int_{\partial\Omega}\frac{\partial u_\theta}{\partial \theta}  ( u_\theta - u)ds(x),
    \\
    \frac{d\mathcal{L}_{G}}{dt}= -\left\Vert \int_\Omega [\G L\frac{\partial u_\theta}{\partial \theta}] [\G L  ( u_\theta - u )]dx +\lambda_b\int_{\partial\Omega}\frac{\partial u_\theta}{\partial \theta}  ( u_\theta - u)ds(x)  \right\Vert^2.
\end{align*}

We can regard $\G L$ as the differential operator with preconditioning. In other words, if $\G=L_0^{-1}$ and, $L_0$ and $L$ are spectrally equivalent, $\mathcal{L}_G(\theta)\approx \|u_\theta-u\|_{L^2(\Omega)}$.

\subsection{Implementation of the preconditioned PINN}
In practical implementations, several modifications can be introduced to improve the efficiency and stability of the preconditioned PINN approach.

{\bf Truncated integral domain.}
Instead of performing global integration, we restrict the summation of the Green’s function preconditioner to a local neighborhood around each point.  Specifically, for a point $x\in\Omega$, the integration, that is, the summation in discrete form, is carried out over a ball whose radius equals the distance from $x$
to the boundary $\partial\Omega$. 
The term $\mathcal{L}_{G,r}$ in the preconditioned loss function of discrete form is written as
\begin{equation}\label{eq:loss_discrete}
\mathcal{L}_{G,r}(\theta)
= \frac{1}{2N_r}\sum_{i=1}^{N_r}
\left[
  \frac{1}{N_r}\sum_{j=1}^{M_i}
  G(x_i, y_{ij})
  \big(Lu_\theta(y_{ij}) - f(y_{ij})\big)
\right]^2,
\end{equation}
where $\{x_i\}_{i=1}^{N_r}$ are the interior collocation points and 
$\{y_{ij}\}_{j=1}^{M_i} \subset B(x_i, r_i)$ are sample points within the ball 
$B(x_i, r_i)$ centered at $x_i$ with radius $r_i = \operatorname{dist}(x_i,\partial\Omega)$.

The reasons and benefits of the truncation are as follows.
First, without truncation, allowing $y_{ij}$ to go over the entire domain causes the integration region for points $x$ near the boundary to be partially cut off by the domain boundary. As $x$ approaches the boundary, this truncation becomes increasingly asymmetric, leading to growing deviation from the true Green’s function and consequently to poor preconditioning. Second, this localization reduces computational cost and, most importantly, can still capture the singularity of the Green's function, which determines high-frequency asymptotics of the differential operator and hence maintains a good preconditioning effect. Moreover, the localization is adaptive to the distance to the boundary, which can capture the most influential boundary condition. 
The effect of this truncation is analyzed in detail in Subsection~\ref{subsec:truncation}.

The formulation~\eqref{eq:loss_discrete} is a discrete approximation of the continuous integral operator. In particular, the distribution of $x_i$ and $y_{ij}$ determines the discretization error. In our tests, we choose our collocation points on a grid for both $x_i$ and $ y_{ij}$. So, the spectral property of the kernel can be maintained accurately for frequency modes that can be resolved well by the underlying grid. We remark that, since a NN representation allows evaluation and differentiation at any point, $x_i$ and $y_{ij}$ can be sampled freely. One can potentially design more efficient and accurate adaptive sampling around each $x_i$ accordingly. This meshless representation is more helpful for complicated geometry and in high dimensions. 

{\bf Boundary condition.} It is well-known that one challenge in designing the loss function for the standard PINN model~\eqref{eq_loss_PINN} is the proper balance between the PDE residue term $\mathcal{L}_r$ in the domain interior and the boundary error term $\mathcal{L}_b$. Actually, this balance is solution-dependent. Take the Dirichlet boundary condition as an example, $\mathcal{L}_r$ is the approximation error involving the second derivatives of the solution, while $\mathcal{L}_b$ is the approximation error of the solution on the boundary. In particular, if the solution has multiple scales, it is difficult to choose a constant $\lambda_b$ to achieve a proper balance. 
% The same issue persists for Neumann boundary condition or mixed (Robin type) boundary condition when the boundary term $\mathcal{L}_b$ involves the normal derivative. Again there is a mismatch between the order of derivatives in $\mathcal{L}_r$ and $\mathcal{L}_b$. 
% Below, we show that this mismatch can be solved by the preconditioning strategy (and a careful implementation of the boundary condition in the case of Neumann boundary condition).
For Dirichlet boundary condition, this is straightforward for the preconditioned PINN model~\eqref{form:preconditionPINN}, since $\mathcal{L}_{G,r}(\theta)\approx \|u_\theta-u\|_{L^2(\Omega)}$, if $G=L_0^{-1}$ and $L_0$ is spectrally equivalent to $L$. So as long as $\lambda_b$ balances the sampling density at the boundary and in the interior properly, the two terms are well-matched and $\mathcal{L}_G(\theta)\approx \|u_\theta-u\|_{L^2({\overline{\Omega}})}$, where ${\overline{\Omega}}=\Omega\cup\partial\Omega$. In this work, we set the regularization parameter in the 
preconditioned loss function to $1$ in all numerical experiments. 
We also observe that the performance is not sensitive to the choice 
of this parameter when preconditioning is applied.

{\bf Computation Cost.} Using an integral operator introduces an extra summation operation. This summation can be accelerated using the fast multi-pole method (FMM) \cite{greengard1987fast, ying2004kernel} or a hierarchical hemi-separable (HHS) \cite{chandrasekaran2006fast, chandrasekaran2006fast} sparsification of the kernel in general. However, in practice, this added summation cost is insignificant compared to the computation cost of taking derivatives of a NN representation and the backpropagation operation in each training step. Hence, the computation cost for the preconditioned PINN and the standard PINN is comparable as shown in our numerical tests.  

We present extensive numerical experiments, from 1D examples to 2D multi-scale problems, to demonstrate significant acceleration of convergence and improvement of accuracy when preconditioning is applied. Moreover, the preconditioning framework works on top of any network structure and improves their performance.  For 1D problems, we use FCNNs as the network representation. For more challenging 2D problems, we employ a multi-component and multi-layer neural network (MMNN) architecture introduced in~\cite{zhang2025fourier,zhang2025structured} to represent the solution $u_\theta$. 
While an easy modification to FCNNs through the introduction of balanced multi-component structures in the network - the number of components in each layer, called the rank, is much smaller than the network width, MMNNs achieve a significant reduction of training parameters, a much more efficient training process, and a much improved accuracy compared to FCNNs, especially for functions with complex features.
Such architectures improve scalability in terms of network width and depth for high-dimensional problems, making the preconditioned PINN more practical for large-scale applications. We will showcase in Example 6(a) and 6(b) that for the same PDE, using the MMNN architecture can reach the same level of accuracy with one order of magnitude fewer learnable parameters than FCNN. For all network structures, we use Sine activation function and initial scaling to improve the representation capability and reduce the frequency bias of NN representations. Moreover, a network using Sine as the activation function can be differentiated as many times as needed. The combination of these NN setups and the preconditioning strategy allows us to solve PDEs almost as direct function approximation in terms of efficiency and accuracy.
\subsection{Effective norm of the preconditioned loss term}\label{subsec:truncation}
In this subsection, we analyze the continuous form of the preconditioned loss $\cL_{G, r}$ in~\eqref{eq:loss_discrete}, and show that with the truncated integral domain, this term is weaker than $\| L ( u_{\theta} - u) \|^2_{H^{-1+\eps}(\Omega)}$ for any sufficiently small $\eps > 0$. It essentially means the loss function retrieves at least $(1-\eps)$ derivative back from $L ( u_{\theta} - u)$. Moreover, if we can have a uniform estimate of the decay of the Fourier transform of $L ( u_{\theta} - u)$, it is possible to retrieve $3/2$ derivatives for the loss function.

\begin{lemma}\label{Lem: A estimate}
Let $\Phi_d(r) = \log(r)$ for $d=2$ and $\Phi_d(r) = \sqrt{\frac{2}{\pi}}\frac{1}{r}$ for $d=3$. Define $\cA_d(\rho, \ell)$, $\ell\in[0, 1]$, as follows:
\begin{equation}\nonumber
\begin{aligned}
    \cA_d(\rho, \ell) &:= \rho^{-\frac{d-2}{2}}\int_0^{\ell} \Phi_d(r) J_{\frac{d-2}{2}}(\rho r) r^{\frac{d-2}{2}} r dr \\
    &= \begin{cases}
        \frac{1}{\rho^2}\left(1 - J_0(\rho\ell) - \log(\ell) \rho \ell J_1(\rho\ell)\right),\quad &d = 2,\\
        \frac{1}{\rho^{2}} (1 - \cos(\rho\ell)),\quad &d=3,
    \end{cases}
\end{aligned}
\end{equation}
where $J_{\nu}$ is the Bessel function of the first kind of order $\nu$. Then there is an absolute constant $C_{\cA} > 0$ that 
$$|\cA_d(\rho, \ell)| \le \begin{cases}
C_{\cA} \min (1, \rho^{-3/2}),\quad& d=2\\
C_{\cA} \min (1, \rho^{-2}),\quad& d=3.
\end{cases}$$ 
for all $\rho \ge 0$.
\end{lemma}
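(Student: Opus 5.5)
The plan is to first confirm the closed forms by a direct computation with the half-integer and integer Bessel identities, and then bound each expression separately in the regimes $\rho\le 1$ and $\rho>1$, keeping every constant uniform in $\ell\in[0,1]$. Overall multiplicative constants and signs in the closed forms do not affect the claim, since they are absorbed into $C_{\cA}$. For instance, for $d=3$ the computation below gives a prefactor $2/\pi$, and for $d=2$ it gives the overall opposite sign.

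\emph{Case $d=3$.} Use $J_{1/2}(z)=\sqrt{2/(\pi z)}\,\sin z$. The integrand becomes a constant times $\rho^{-1}\sin(\rho r)$, so integrating gives a constant times $\rho^{-2}(1-\cos(\rho\ell))$. For the bound:
\begin{itemize}
\item When $\rho\ge 1$, use $|1-\cos(\rho\ell)|\le 2$ to get $|\cA_3|\le C\rho^{-2}$.
\item When $\rho<1$, use $1-\cos z\le z^2/2$ to get $|\cA_3|\le C\ell^2/2\le C$.
\end{itemize}
Together these give $C_{\cA}\min(1,\rho^{-2})$.

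\emph{Case $d=2$.} Integrate by parts using $\frac{d}{dr}\big(rJ_1(\rho r)\big)=\rho\, rJ_0(\rho r)$ and $\frac{d}{dr}J_0(\rho r)=-\rho J_1(\rho r)$. This yields
\[
\int_0^\ell \log(r) J_0(\rho r)\, r\,dr=\frac{\ell\log(\ell) J_1(\rho\ell)}{\rho}-\frac{1-J_0(\rho\ell)}{\rho^2}.
\]
The boundary term at $r=0$ vanishes because $r\log r\, J_1(\rho r)\to 0$. This is the stated expression up to sign.

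For $\rho\le 1$, use $0\le 1-J_0(z)\le z^2/4$ and $|J_1(z)|\le z/2$. These bound $|\cA_2|$ by $\ell^2/4+\ell^2|\log\ell|/2$, which is uniformly bounded on $[0,1]$ since $\ell^2|\log\ell|\le 1/(2e)$.

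For $\rho>1$, the term $(1-J_0)/\rho^2$ is at most $2\rho^{-2}\le 2\rho^{-3/2}$. The remaining term is $|\log\ell|\,\ell\,|J_1(\rho\ell)|/\rho$, and this is the main obstacle. It decays only like $\rho^{-3/2}$, and the $\log\ell$ factor must be controlled uniformly in $\ell$. I would use the standard bound $|J_1(z)|\le C\min(z,z^{-1/2})$ and split into two subcases:
\begin{itemize}
\item If $\rho\ell\le 1$, the term is at most $C\ell^2|\log\ell| = C\big(\ell^{1/2}|\log\ell|\big)\ell^{3/2}\le C'\rho^{-3/2}$, using $\ell\le\rho^{-1}$ and the boundedness of $\ell^{1/2}|\log\ell|$ on $[0,1]$.
\item If $\rho\ell>1$, the term is at most $C|\log\ell|\,\ell\,(\rho\ell)^{-1/2}\rho^{-1}=C\,\ell^{1/2}|\log\ell|\,\rho^{-3/2}\le C'\rho^{-3/2}$.
\end{itemize}
Combining the two regimes of $\rho$ gives $|\cA_2|\le C_{\cA}\min(1,\rho^{-3/2})$.

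The exponent $3/2$ comes precisely from the $z^{-1/2}$ decay of $J_1$. This explains why the two-dimensional rate is weaker than the $\rho^{-2}$ rate in three dimensions.
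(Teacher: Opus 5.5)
Your proposal is correct and follows essentially the paper's route: small-argument bounds for $J_0, J_1$ (resp.\ $1-\cos z$), the $z^{-1/2}$ large-argument decay of $J_1$, and a split according to whether $\rho\ell\le 1$ or $\rho\ell>1$. Your version is more explicit than the paper's terse argument in three places: you verify the closed forms (catching the missing overall sign for $d=2$ and the missing factor $2/\pi$ for $d=3$), and you spell out the step where $\ell^2|\log\ell|$ becomes $\rho^{-3/2}$ via $\ell\le\rho^{-1}$ and the boundedness of $\ell^{1/2}|\log\ell|$.
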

\begin{proof}
    When $d=2$, for $x\in [0, 1]$, $J_{0}(x) = 1 + \cO(x^2)$ and $J_1(x) = \cO(x)$. For $x > 1$, we use the asymptotic approximation for $k=0, 1$ in~\cite{krasikov2014approximations} that 
    \begin{equation*}
        J_k(x) = \sqrt{\frac{2}{\pi x}}\cos(x - \frac{k \pi}{2} - \frac{\pi}{4}) + \cO\left(\frac{|k^2-\frac{1}{4}|}{x^{3/2}}\right),
    \end{equation*}
    which concludes the desired estimate by letting $x = \rho\ell$. Notice that $\ell^2 |\log \ell| \le \ell|\log\ell|$ is uniformly bounded on $\ell\in[0,1]$. 
    
    When $d=3$, we have $1-\cos(\rho\ell) = \cO((\rho\ell)^2)$ when $\rho\ell \in [0, 1]$ and if $\rho\ell > 1$, we use $|1-\cos(\rho\ell)| = \cO(1)$.
\end{proof}

\begin{theorem}
Let $\Omega = B(0,1)$ be the unit disc in $\bbR^d$, $d=2, 3$, 
and $\ell(x) = \operatorname{dist}(x, \partial\Omega) = 1- |x|$. Denote $\chi$ be the characteristic function in the unit disc and $$G_d(x) = \begin{cases}
    -\frac{\log |x|}{2\pi},\quad &d=2, \\
     -\frac{1}{4\pi|x|},\quad &d= 3, 
\end{cases}$$
be the Green's function of the Laplacian in free space. Define 
\begin{equation*}
    \cT f(x) := \int_{\bbR^2}\chi\left(\frac{x - y}{\ell(x)}\right) G(x - y) f(y) dy.
\end{equation*}
\begin{enumerate}
    \item Let $f\in H^s(\Omega)$, then for any $\eps > 0$, there exists a constant $C_{\eps}$ that 
\begin{equation*}
    \|\cT (I - \Delta)^{s} f\|_{L^2(\Omega)} \le C_{\eps} \|f\|_{H^{s-(1-\eps)}(\Omega)},
\end{equation*}
\item Let $\widehat{f}(\xi)$ be the Fourier transform of $f$. Suppose 
$$\widehat{f}(\xi) = 
\begin{cases}
  \displaystyle \sum_{k=-\infty}^{\infty} c_k(|\xi|) e^{i k \arg(\xi)},\quad &d=2, \\
 \displaystyle  \sum_{k=0}^{\infty}\sum_{l=-k}^k c_{k,l}(|\xi|) Y_{kl}\left(\frac{\xi}{|\xi|}\right),\quad &d=3,
\end{cases}
$$ satisfies that
\begin{equation*}
\begin{aligned}
 |c_{\alpha}(|\xi|)| &\le D \gamma(\alpha) (1 + |\xi|^2)^{-\frac{s}{2} -\delta - \frac{d-2}{2}}, \\
 |c_{\alpha}(|\xi|)| &\le \gamma(\alpha) \left(\sum_{\beta} |c_{\beta}(|\xi|)|^2 \right)^{1/2},
\end{aligned}   
\end{equation*}
where $\alpha, \beta$ represent the multi-subscripts in $d=2$ or $d=3$, $Y_{kl}$ is the spherical harmonics,  $D$ is a constant, $\delta > 0$, and $\sum_{\beta} |\gamma(\beta)|^2 < \infty$. Then there exists $p\in (2,\infty)$ and a constant $C' > 0$ that
\begin{equation*}
     \|\cT (I - \Delta)^{s} f\|_{L^2(\Omega)} \le C' \|f\|_{B,s,p}^{(p-2)/p} \|f\|_{H^{s-3/2}(\Omega)}^{2/p},
\end{equation*}
where the norm
\begin{equation*}
    \|f\|_{B,s,p} := \left( \int_0^{\infty} \left|(1+|\xi|^2)^{s-(2-\frac{d}{2})} \sum_{\alpha}|c_{\alpha}(\xi)|^2\right|^{\frac{p-2}{2(p-1)}}  d|\xi| \right)^{\frac{p-1}{p-2}}.
\end{equation*}
\end{enumerate}

\end{theorem}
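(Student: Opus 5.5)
The plan is to work in the Fourier domain with $x$ frozen. For fixed $x\in\Omega$, write
\[
\cT g(x)=\int K_x(x-y)\,g(y)\,dy, \qquad K_x(z)=\chi(z/\ell(x))\,G(z),
\]
where $g=(I-\Delta)^s f$ is extended by zero (or by a bounded Sobolev extension) outside $\Omega$. Fourier inversion then gives
\[
\cT g(x)=(2\pi)^{-d}\int \widehat{K_x}(\xi)\,\widehat g(\xi)\,e^{ix\cdot\xi}\,d\xi .
\]
Because $K_x$ is radial, its transform is computed with the Hankel/Bessel formula: $\widehat{K_x}(\xi)$ is, up to constants, $|\xi|^{-(d-2)/2}\int_0^{\ell}\Phi_d(r)J_{(d-2)/2}(|\xi| r)r^{d/2}\,dr$. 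This is exactly $\cA_d(|\xi|,\ell(x))$. In $d=2$ one must check that the extra term $-\log(2\pi)$ times the truncated integral is lower order; it contributes $\ell J_1(\rho\ell)/\rho$, which is bounded by $\min(1,\rho^{-3/2})$ as well. Lemma~\ref{Lem: A estimate} then gives the uniform-in-$x$ bound
\[
|\widehat{K_x}(\xi)|\le C\min\bigl(1,|\xi|^{-3/2}\bigr)
\]
(or $|\xi|^{-2}$ in $d=3$), with $\ell(x)\le 1$.

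\textbf{Part 1.} By Cauchy--Schwarz in $\xi$ with the weight $(1+|\xi|^2)^{s-1+\eps}$,
\[
|\cT g(x)|^2\le \Bigl(\int |\widehat{K_x}|^2 (1+|\xi|^2)^{2s-(s-1+\eps)}\,\cdots\Bigr)\,\|f\|^2_{H^{s-1+\eps}} .
\]
This needs
\[
\int (1+|\xi|^2)^{\,1-\eps}\,|\widehat{K_x}(\xi)|^2\,d\xi<\infty
\]
uniformly in $x$. For $d=3$ the integrand decays like $|\xi|^{2-2\eps-4}$ against $|\xi|^2\,d|\xi|$, which is borderline divergent. A pointwise Cauchy--Schwarz is therefore too lossy, and I would instead exploit the operator structure.

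Split $K_x=G-(1-\chi_{\ell})G$. The first piece is the global free-space Newtonian potential, which maps $H^{s-2}\to H^{s}$ locally. The second piece has a smooth-away-from-zero kernel, but its cutoff radius $\ell(x)$ depends on $x$. To handle the variable cutoff I would dyadically decompose $\Omega$ into shells $\ell(x)\sim 2^{-j}$. On each shell the operator is a pseudodifferential operator whose symbol $a(x,\xi)=\widehat{K_x}(\xi)$ is bounded by $\min(1,|\xi|^{-1-(1-\eps')})$, using the lemma with some derivative loss. The $\eps$ loss in the final estimate absorbs the nonsmooth dependence on $x$ through $\ell(x)$, and in particular the $\log \ell$ factor in $d=2$. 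The $L^2$ bound then follows from a Calder\'on--Vaillancourt/Schur-type estimate for symbols of order $-(1-\eps)$ with limited $x$-regularity. A cruder route interpolates between the trivial bound $\|\cT g\|_{L^2}\lesssim\|g\|_{L^2}$ and an $H^{-2}$-type bound. This step, the uniform control of the $x$-dependent truncation, is the main obstacle, and it is where $\eps$ is lost.

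\textbf{Part 2.} Here I would write $\widehat g$ in polar coordinates using the assumed spherical-harmonic expansion. Integrating in angle with $e^{ix\cdot\xi}$ (Jacobi--Anger / Funk--Hecke) turns each mode into a radial integral
\[
\int_0^\infty \cA_d(\rho,\ell)\,c_\alpha(\rho)\,(1+\rho^2)^s J_{k+(d-2)/2}(\rho|x|)\,\rho^{d-1}\,d\rho .
\]
I would use $|J_\nu|\le 1$ (or $\lesssim(\rho|x|)^{-1/2}$) together with the second coefficient hypothesis $|c_\alpha|\le\gamma(\alpha)(\sum_\beta|c_\beta|^2)^{1/2}$ to sum over $\alpha$ with $\sum\gamma^2<\infty$. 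The result bounds $|\cT g(x)|$ by a single radial integral of $(1+\rho^2)^s\min(1,\rho^{-3/2})\,S(\rho)\,\rho^{d-1}$, where $S(\rho)=(\sum_\beta|c_\beta(\rho)|^2)^{1/2}$.

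Next apply H\"older with exponents chosen so that one factor reproduces the weighted $L^2$ quantity $\int(1+\rho^2)^{s-3/2}S^2\rho^{d-1}\,d\rho\approx\|f\|^2_{H^{s-3/2}}$ raised to the power $2/p$. The remaining factor is $\|f\|_{B,s,p}$ raised to $(p-2)/p$. The first coefficient hypothesis, the decay $(1+\rho^2)^{-s/2-\delta-(d-2)/2}$, guarantees that the $B$-norm integral converges for some $p\in(2,\infty)$ close to 2. Finally, integrating the pointwise bound over $x\in\Omega$, a set of finite measure, gives the claim.
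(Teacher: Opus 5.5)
Part 1 is not actually proved. The step you flag as ``the main obstacle'' is the whole content of the estimate, and the tools you name cannot handle it.

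First, your pointwise Cauchy--Schwarz fails in both dimensions, not only in $d=3$. With $|\cA_2(\rho,\ell)|\sim\rho^{-3/2}$, the required integral $\int^\infty|\cA_2|^2\rho^{2-2\eps}\,\rho\,d\rho$ behaves like $\int^\infty\rho^{-2\eps}\,d\rho$, exactly as with $|\cA_3|\sim\rho^{-2}$ in $d=3$. In each case you are short by precisely one factor $\rho^{-1/2}$ in the symbol.

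Second, the pseudodifferential replacement cannot supply that factor. The $x$-dependence of $\cA_d(|\xi|,\ell(x))$ is not a mild roughness that an $\eps$-loss can absorb. It enters through oscillatory terms, $\rho^{-2}\cos(\rho\ell(x))$ in $d=3$ and $\rho^{-1}\ell(x)\log\ell(x)\,J_1(\rho\ell(x))$ in $d=2$. Since $|\nabla\ell|=1$, every $x$-derivative costs a full factor $|\xi|$, on every dyadic shell alike. After multiplying by $\langle\xi\rangle^{1-\eps}$, the $x$-derivatives of the symbol (of order two in $d=3$, already of order one in $d=2$) are unbounded in $\xi$, so Calder\'on--Vaillancourt does not apply. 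In your splitting, the piece $(1-\chi_\ell)G$ is, for $d=3$ and up to sign, $(-\Delta)^{-1}\cos(\ell(x)\sqrt{-\Delta})$. That is a wave propagator evaluated on the characteristic cone $t=1-|x|$, i.e.\ a Fourier integral operator, not a pseudodifferential one.

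Third, the ``cruder route'' presupposes an $H^{-2}\to L^2$ bound for $\cT$. The truncation destroys that bound: already $\cA_2$ decays only like $\rho^{-3/2}$. And if the bound held, no interpolation would be needed.

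Your opening step, treating $\cT$ as an $x$-dependent multiplier with symbol $\cA_d(|\xi|,\ell(x))$, matches the paper. The missing idea is to integrate in $x$ before estimating anything. Take $\phi=(I-\Delta)^{s/2}f$. The paper writes $\|\cT\phi\|_{L^2(\Omega)}^2=\iint\cH_d(\xi,\zeta)\widehat\phi(\xi)\overline{\widehat\phi(\zeta)}\,d\xi\,d\zeta$ with $\cH_d(\xi,\zeta)=\int_\Omega\cA_d(|\xi|,\ell(x))\cA_d(|\zeta|,\ell(x))e^{i(\xi-\zeta)\cdot x}\,dx$.

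Because $\Omega$ and $\ell$ are radial, this is a radial integral against $J_0(|\xi-\zeta|r)$ (or its $d=3$ analogue). Graf's/Gegenbauer's addition formula then makes the form diagonal in the angular index, with radial kernels $\cH_{d,k}(|\xi|,|\zeta|)=\int_0^1\cA_d(|\xi|,1-r)\cA_d(|\zeta|,1-r)J_k(|\xi|r)J_k(|\zeta|r)\,r\,dr$ (orders $k+\frac12$ and weight $r^2$ in $d=3$).

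The Bessel factors, integrated in $r$, supply an extra $(|\xi||\zeta|)^{-1/2}$ uniformly in $k$. This is exactly the half power you lack. It is not available from pointwise Bessel bounds, because $|J_k(t)|\lesssim t^{-1/2}$ fails uniformly in $k$ near the turning point $t\approx k$.

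The result is the product bound $|\cH_{d,k}|\lesssim\langle\xi\rangle^{-(d+2)/2}\langle\zeta\rangle^{-(d+2)/2}$. Hence each angular mode is at most $\bigl(\int_0^\infty\langle\rho\rangle^{-(d+2)/2}|F_\alpha(\rho)|\rho^{d-1}\,d\rho\bigr)^2$, where $F_\alpha$ are the angular coefficients of $\widehat\phi$. A one-dimensional Cauchy--Schwarz with the integrable weight $\langle\rho\rangle^{-d-2\eps}\rho^{d-1}$, summed over $\alpha$, yields $\|f\|_{H^{s-1+\eps}}^2$.

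The paper's Part 2 is the same computation with H\"older in place of that last Cauchy--Schwarz. Your pointwise Part 2 instead feeds H\"older a weight half a power weaker: $\min(1,\rho^{-3/2})$ rather than $\langle\rho\rangle^{-2}$ in $d=2$. So the leftover factor is not the stated $\|f\|_{B,s,p}$; you assert that identification without checking exponents. In $d=2$, the decay hypothesis only guarantees this factor is finite when $\delta>1/4$.

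Separately, with only $|J_\nu|\le 1$, summing $\gamma(\alpha)$ over modes needs $\sum_\alpha\gamma(\alpha)<\infty$, not $\sum_\alpha\gamma(\alpha)^2<\infty$. Cauchy--Schwarz on the sphere is the right step there.
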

\begin{proof}
Let $\phi\in L^2(\Omega)$, then 
\begin{equation*}
\begin{aligned}
   & \int_{\Omega} \left| \int_{\bbR^2}\chi\left(\frac{x - y}{\ell(x)}\right) G_d(x - y)  \phi(y) dy \right|^2 dx \\
    &=\int_{\Omega} \left| \int_{\bbR^2}\chi\left(\frac{x - y}{\ell(x)}\right) G_d(x - y) \int_{\bbR^2} \widehat{\phi}(\xi) e^{i \pi \xi\cdot y} d y \right|^2 dx \\ 
    &=\int_{\Omega} \left| \int_{\bbR^2} \widehat{\phi}(\xi) \left( \int_{\bbR^2} \chi\left(\frac{x - y}{\ell(x)}\right) G_d(x - y)    e^{i \xi\cdot y} dy \right) d\xi \right|^2 dx \\
    &= \int_{\Omega} \left| \int_{\bbR^2} \widehat{\phi}(\xi) e^{i\xi \cdot x} \cA_d(|\xi|, \ell(x)) d\xi \right|^2 dx\\
    &= \int_{\bbR^2} \int_{\bbR^2} \cH_d(\xi, \zeta) \widehat{\phi}(\xi) \overline{\widehat{\phi}(\zeta)} d\xi d\zeta.
\end{aligned}
\end{equation*}
The kernel function
\begin{equation}\label{EQ: H Kernel}
    \begin{aligned}
\cH_d(\xi, \zeta) &= \dint_{\Omega} \cA_d(|\xi|, \ell(x)) \cA_d(|\zeta|, \ell(x)) e^{i(\xi - \zeta)\cdot x} dx  \\
& = \begin{cases}
2\pi  \dint_0^1   \cA_d(|\xi|, 1-r) \cA_d(|\zeta|, 1-r) J_0(|\xi - \zeta| r) rd r, \quad &d =2 ,\\
(2\pi)^{3/2} \dint_0^1   \cA_d(|\xi|, 1-r) \cA_d(|\zeta|, 1-r) \frac{J_{1/2}(|\zeta - \xi| r)}{\sqrt{|\zeta - \xi| r}}  r^2d r, \quad &d=3,
\end{cases}
    \end{aligned}
\end{equation}
where $J_{\nu}$ denotes the Bessel function of the first kind at order $\nu$. Next, we reformulate~\eqref{EQ: H Kernel} by Graf's and  Gegenbauer's addition formula 
\begin{equation*}
    \begin{aligned}
J_0(|\zeta - \xi|r) &= \sum_{k=-\infty}^{\infty} J_k(|\xi|r) J_k(|\zeta|r) C_{k}^{(0)}(\cos\theta),\\
\frac{J_{1/2}(|\zeta - \xi| r)}{\sqrt{|\zeta - \xi| r}} &= \sqrt{\frac{\pi}{2}}\sum_{k=0}^{\infty} (2k + 1) J_{k+1/2}(|\xi|r) J_{k+{1/2}}(|\zeta| r) C^{(1/2)}_{k}(\cos\theta),
    \end{aligned}
\end{equation*}
 where $\theta$ is the angle between $\xi$ and $\zeta$, $C^{(0)}_{k}$ and $C^{(1/2)}_{k}$ are Gegenbauer polynomials.
Then we obtain a decomposition for $d=2,3$:
\begin{equation}\nonumber
    \begin{aligned}
        \cH_d(\xi,\zeta) &= 
        \begin{cases}
       \displaystyle 2\pi \sum_{k=-\infty}^{\infty} \cH_{d, k}(|\xi|, |\zeta|) C_{k}^{(0)}(\cos\theta), \quad &d=2,\\
         \displaystyle 2\pi^{2} \sum_{k=0}^{\infty} (2k+1) \cH_{d, k}(|\xi|, |\zeta|) C_{k}^{(\frac{1}{2})}(\cos\theta), \quad &d = 3,
        \end{cases}
        \\
         \cH_{d, k}(|\xi|, |\zeta|) &= \begin{cases}
             \dint_0^1 \cA_d(|\xi|, 1-r) \cA_d(|\zeta|, 1-r)  J_k(|\xi|r) J_k(|\zeta|r) rd r,\quad &d=2,\\
     \dint_0^1 \cA_d(|\xi|, 1-r) \cA_d(|\zeta|, 1-r)  J_{k+1/2}(|\xi|r) J_{k+1/2}(|\zeta|r) r^2d r, \quad &d=3.
         \end{cases}
    \end{aligned}
\end{equation}
By Lemma~\ref{Lem: A estimate}, $\cH_{d, k}(|\xi|, |\zeta|)$ is uniformly bounded for all $\xi, \zeta$ and $k$. 
For $|\nu|\le \frac{1}{2}$, it is known that $|J_{\nu}(\rho r)|\le \sqrt{\frac{2}{\pi \rho r}}$~\cite[Theorem 7.31.2]{szeg1939orthogonal}. Hence, there exists an absolute constant $C$ that $|\cH_{d, 0}(|\xi|, |\zeta|)| \le \frac{C}{|\xi|^{(d+2)/2} |\zeta|^{(d+2)/2}}$ for $d=2, 3$.
For $|\nu|\ge 1$, we use the estimate from~\cite{krasikov2014approximations}, $|J_{\nu}(\rho r)| \le \sqrt{\frac{2}{\pi}} |\rho^2 r^2 - \omega_{\nu}^2 |^{-1/4}$, $\omega_{\nu}^2 = \nu^2 - \frac{1}{4}$. Then, take $\omega = \omega_{k}$ or $\omega = \omega_{k+1/2}$, we have the estimate
\begin{equation*}
\begin{aligned}
   |  \cH_{d, k}(|\xi|, |\zeta|) | &\le \frac{ C_{\cA}^2 }{|\xi|^{(d+2)/2}|\zeta|^{(d+2)/2}} \int_0^1 \frac{ r  }{ | r^2 - (\frac{\omega}{|\xi|})^2 |^{1/4}  | r^2 - ( \frac{\omega}{|\zeta|})^2 |^{1/4}} dr\\
   &\le \frac{ C_{\cA}^2 }{|\xi|^{(d+2)/2}|\zeta|^{(d+2)/2}} \int_0^1 \frac{ \sqrt{r}  }{ | r - \frac{\omega}{|\xi|} |^{1/4}  | r -  \frac{\omega}{|\zeta|} |^{1/4}} \frac{\sqrt{r} }{ | r + \frac{\omega}{|\xi|} |^{1/4}  | r +\frac{\omega}{|\zeta|} |^{1/4}} dr \\
   &\le \frac{ C_{\cA}^2 }{|\xi|^{(d+2)/2}|\zeta|^{(d+2)/2}} \int_0^1 \frac{ \sqrt{r}  }{ | r - \frac{\omega}{|\xi|} |^{1/4}  | r -  \frac{\omega}{|\zeta|} |^{1/4}}  dr \\
   &\le \frac{ C }{|\xi|^{(d+2)/2}|\zeta|^{(d+2)/2}} . 
\end{aligned}
\end{equation*}
Therefore, combined with the uniform boundedness of $\cH_{d,k}$, we have the bound
\begin{equation*}
    |  \cH_{d, k}(|\xi|, |\zeta|) | \le \frac{C'}{(1+|\xi|^2)^{(d+2)/4}(1 + |\zeta|^2)^{(d+2)/4}}
\end{equation*}
for some absolute constant $C' > 0$. For any $f\in H^s(\Omega)$, we take $\phi = (I - \Delta)^{s/2} f \in L^2(\Omega)$, then 
\begin{equation*}
\begin{aligned}
\widehat{\phi}(\xi) = 
\begin{cases}
\displaystyle \sum_{k=-\infty}^{\infty} (1+|\xi|^2)^{s/2} c_{k}(|\xi|) e^{ik\arg(\xi)},\quad &d = 2,\\
\displaystyle    \sum_{k=0}^{\infty}\sum_{l=-k}^k (1+|\xi|^2)^{s/2} c_{k, l}(|\xi|) Y_{kl}\left(\frac{\xi}{|\xi|}\right),\quad &d=3,
\end{cases}
\end{aligned}    
\end{equation*}
where $Y_{kl}$ denotes the spherical harmonics on $\bbS^{d-1}$. Then, denote $\Theta(|\xi|, |\zeta|) = (1+|\xi|^2) (1+|\zeta|^2)$, use Cauchy-Schwartz inequality,  
\begin{equation*}
\begin{aligned}
    &\int_{\bbR^d} \int_{\bbR^d} \cH_d(\xi, \zeta) \widehat{\phi}(\xi) \overline{\widehat{\phi}(\zeta)} d\xi d\zeta \\
    &\asymp \begin{cases}
    \displaystyle \sum_{k=-\infty}^{\infty}  \iint_{\bbR^2_{+}} \cH_{d, k}(|\xi|, |\zeta|) \Theta(|\xi|, |\zeta|)^{s/2} c_{k}(|\xi|)\overline{c_{k}(|\zeta|)} |\zeta|^{d-1}|\xi|^{d-1} d|\zeta| d|\xi|, \quad & d=2\\
    \displaystyle \sum_{k=0}^{\infty}\sum_{l=-k}^k \iint_{\bbR^2_{+}} \cH_{d, k}(|\xi|, |\zeta|) \Theta(|\xi|, |\zeta|)^{s/2} c_{k,l}(|\xi|)\overline{c_{k,l}(|\zeta|)} |\zeta|^{d-1}|\xi|^{d-1} d|\zeta| d|\xi|,\quad & d=3
    \end{cases}\\
    &\le C' \sum_{\alpha} \left(   \int_0^{\infty}  (1+|\xi|^2)^{\frac{s}{2}-\frac{d+2}{4}} |c_{\alpha}(|\xi|)| |\xi|^{d-1} d|\xi| \right)^2\\
    &\le  C' \sum_{\alpha} \left( \int_0^{\infty} \frac{1}{(1+|\xi|^2)^{\frac{d}{2}+\eps}} |\xi|^{d-1} d|\xi|\right) \left(\int_0^{\infty}  (1+|\xi|^2)^{s -1 +\eps} |c_{\alpha}(|\xi|)|^2 |\xi|^{d-1}d|\xi|\right) \\& \asymp \|f\|_{H^{s-(1-\eps)}}^2,
\end{aligned}
\end{equation*}
where we use $\alpha$ to represent all possible subscripts in the summation for both $d=2$ and $d=3$.
For the second part, we adopt the H\"older inequality instead of the Cauchy-Schwartz inequality in the last step. Using the controlled decay rates of $|c_k(|\xi|)|$ in the theorem, we obtain
\begin{equation}\nonumber
\begin{aligned}
& \sum_{\alpha} \left(   \int_0^{\infty}  (1+|\xi|^2)^{\frac{s}{2} - \frac{d+2}{4}} |c_{\alpha}(|\xi|)| |\xi|^{d-1} d|\xi| \right)^2 \\
    &\le\sum_{\alpha} \left( \int_0^{\infty} \left|(1+|\xi|^2)^\frac{s-1}{2}|c_{\alpha}(\xi)|\right|^{\frac{p-2}{p-1}}  d|\xi|\right)^{2(p-1)/p} \left(\int_0^{\infty}  (1+|\xi|^2)^{s-3/2} |c_{\alpha}(\xi)|^2 |\xi|^{d-1}d|\xi|\right)^{2/p}\\
    &\le C'' \left( \int_0^{\infty} \left|(1+|\xi|^2)^{s-(2-\frac{d}{2})}\sum_{\alpha}|c_{\alpha}(|\xi|)|^2 \right|^{\frac{p-2}{2(p-1)}}  d|\xi|\right)^{2(p-1)/p} \|f\|_{H^{s-3/2}(\Omega)}^{4/p}\\
    &= C''  \|f\|_{B,s, p}^{2(p-2)/p} \|f\|_{H^{s-3/2}(\Omega)}^{4/p},
\end{aligned}
\end{equation}
where $C'' =  \sum_{k=-\infty}^{\infty} \gamma_k^2 <\infty$, and when $p$ is sufficiently large, $\|f\|_{B,s, p}$ is finite.
\end{proof}

\section{Numerical Experiments}
\label{Sec:Experiments}
In this section, we investigate the performance of the preconditioned PINN under a range of problem settings to assess the effectiveness of the proposed preconditioning strategy. Examples 4–5 consider one-dimensional Poisson equations. Examples 6(a)–(b) examine a one-dimensional elliptic equation with variable coefficients. Examples 7 and 8 study two-dimensional Poisson equations with homogeneous and nonhomogeneous Dirichlet boundary conditions, respectively, while Example 9 addresses a two-dimensional elliptic equation with variable coefficients.  Example 10 examines  the three-dimensional Poisson equations.  For all examples, we use full batch training and the activation function is chosen to be Sine, with the weights of the first layer scaled proportional to $(n_1)^{\frac{1}{d}}$, where 
$n_1$ is the first layer network width and $d$ is the input dimension.

When implementing the preconditioned loss formulation \eqref{form:preconditionPINN}, for all one-dimensional examples in the domain $\Omega = (-1,1)$, we compute $\mathcal{L}_{G,r}$ of the form \eqref{eq: loss gr} by choosing $G$ as the Green's function for the Laplacian with homogeneous Dirichlet boundary conditions, which has the analytical expression:
$$G(x,y) = \begin{cases}
\frac{1}{2}(1-y)(1+x), & \text{if } x \leq y \\
\frac{1}{2}(1-x)(1+y), & \text{if } x > y.
\end{cases}$$
 For the two-dimensional ($d=2$) and three-dimensional ($d=3$) problems in domains of different shapes, we implement the truncated integral form in \eqref{eq:loss_discrete} and select the function $G$ in the preconditioned formulation to be the Green’s function of the Poisson equation in free space, given by
$$G(x,y) =\begin{cases} -\frac{1}{2\pi} \log(|x-y|),\quad &d=2,\\
  -\frac{1}{4\pi|x-y|},\quad &d= 3.
\end{cases}$$
% $$G(x) = \begin{cases}
%     -\frac{\log |x|}{2\pi},\quad &d=2, \\
%      -\frac{1}{4\pi|x|},\quad &d= 3, 
% \end{cases}$$

% \textcolor{blue}{Specifically, the scaling factor is set to $n_1/4$ for Examples 4–6 (where $n_1$ denotes the width of the first hidden layer), while a reduced factor of $n_1/10$ is utilized for Example 7-9.}

In examples below, we implement the preconditioned PINN to solve PDEs, compare with PINN without preconditioning and function approximation, plot the $L^2$ errors, and present the dynamics of selected frequency modes in errors.

\textbf{Example 4:} We solve the Poisson equation with homogeneous Dirichlet boundary conditions, where the force term $f$ is chosen such that  $u_1(x) = \sin(2\pi x) + \sin (5\pi x) + \sin(9\pi x)$ is the exact solution. An FCNN with width 50 and depth 3, corresponding to 5,251 learnable parameters, is used to represent the solution.  The model is trained using 2,000 collocation points.
\begin{figure}[!htb]
\centering
\begin{tabular}{cc}
(a)&(b)\\
\includegraphics[width=0.45\textwidth]{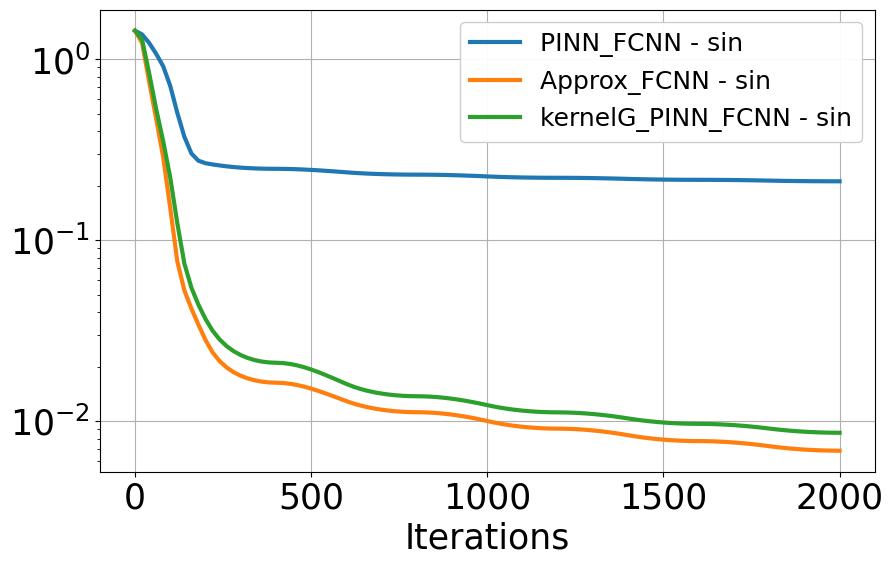}&
\includegraphics[width=0.45\textwidth]{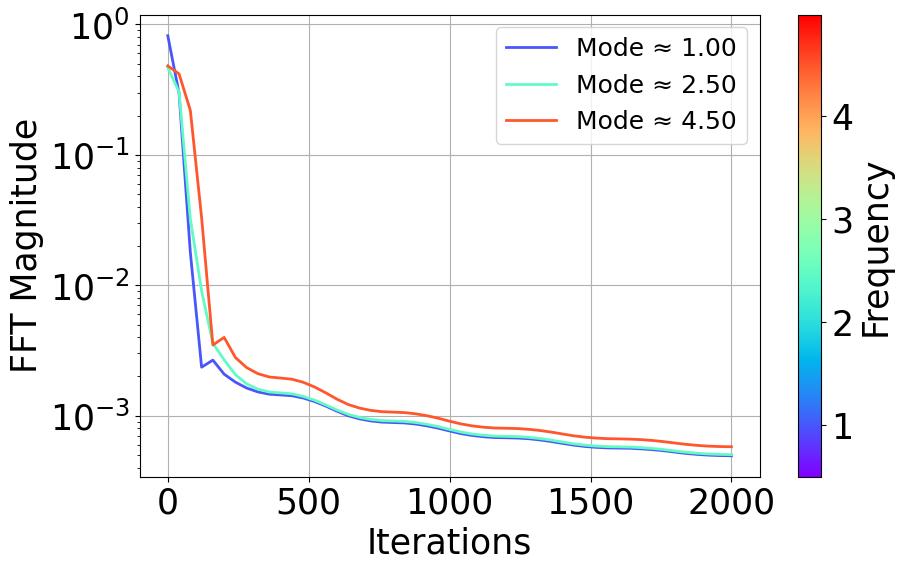}\\[3pt]
(c)&(d)\\
\includegraphics[width=0.45\textwidth]{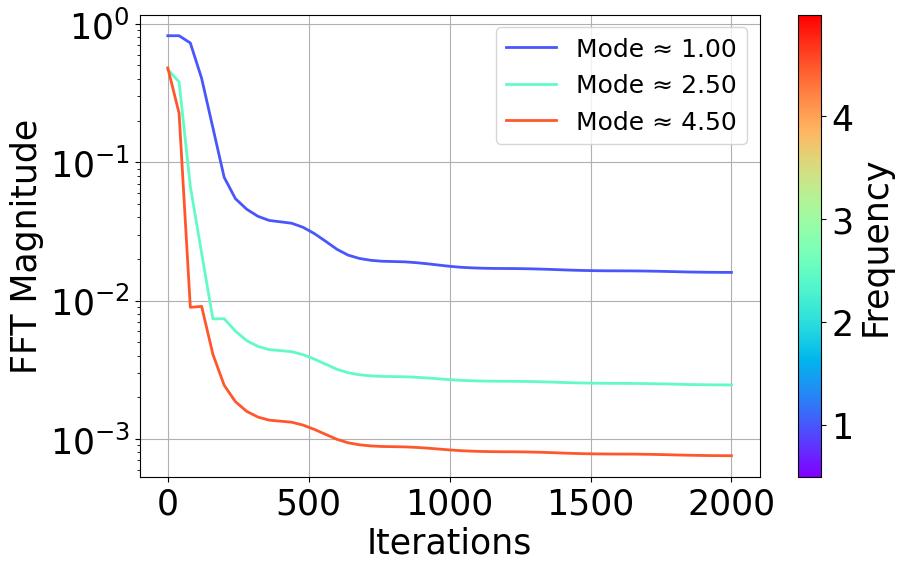}&
\includegraphics[width=0.45\textwidth]{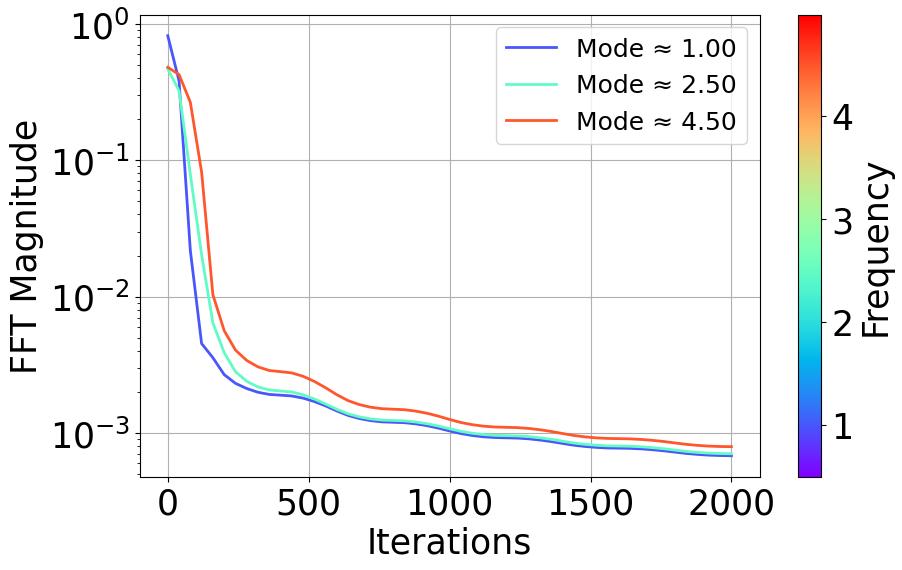}
\end{tabular}
\caption{Example 4: Comparison of $L^2$ error and errors in selected frequency modes in training for different models.  
(a) Training loss for all models;  
(b) Function Approximation;  
(c) PINN;  
(d) Preconditioned PINN.}
\label{fig:example4_comparison}
\end{figure}

% \begin{figure}[htbp]
%     \centering

%     % --- Row 1 ---
%     \begin{subfigure}[b]{0.45\textwidth}
%         \includegraphics[width=\linewidth]{figures/example4_training_L2_loss_all_models.jpg}
%         \caption{Training loss (all models)}
%     \end{subfigure}
%     \hfill
%     \begin{subfigure}[b]{0.45\textwidth}
%         \includegraphics[width=\linewidth]{figures/example4_selected_freq_Approx_sin.jpg}
%         \caption{Approx model (freq)}
%     \end{subfigure}

%     \vspace{0.5cm}

%     % --- Row 2 ---
%     \begin{subfigure}[b]{0.45\textwidth}
%         \includegraphics[width=\linewidth]{figures/example4_selected_freq_PINN_sin.jpg}
%         \caption{PINN model (freq)}
%     \end{subfigure}
%     \hfill
%     \begin{subfigure}[b]{0.45\textwidth}
%         \includegraphics[width=\linewidth]{figures/example4_selected_freq_kernelG_PINN_sin.jpg}
%         \caption{preconditioned PINN (freq)}
%     \end{subfigure}

%     \caption{Example 4: Comparison of training and frequency responses for different models.}
%     \label{fig:example4_comparison}
% \end{figure}
Fig.~\ref{fig:example4_comparison} presents the comparison between standard PINN, preconditioned PINN (labeled as \textit{kernelG\_PINN\_FCNN} in the legend), and direct function approximation. Indeed, we observe that the frequency modes dynamic for the preconditioned PINN is very similar to that for function approximation. The decaying rate of the $L^2$ error is also much better than the PINN formulation without preconditioning and comparable to the direct approximation.
The training times are 16 seconds for direct function approximation, and 24 and 26 seconds for the standard PINN and the preconditioned PINN, respectively. Despite the slight additional cost, the preconditioning strategy effectively mitigates the high-frequency bias induced by the differential operator in the standard PINN formulation, restoring balance across frequency modes and improving both convergence and training accuracy.

%  This demonstrates that the preconditioning strategy successfully mitigates the problematic high frequency bias induced by the differential operator in the standard PINN formulation, restoring balance across frequency modes and improving both convergence and accuracy for the training.
% The training times are 16 seconds for direct function approximation and are comparable for the standard PINN and the preconditioned PINN (24 and 26 seconds, respectively).

\textbf{Example 5:} We solve the Poisson equation with homogeneous Dirichlet boundary conditions, and the exact solution is the localized function $u_2$ in Example 3, which contains fine features and high frequency modes. We use an FCNN with width $200$ and depth $3$, maintaining the same training configuration as Example 4.
\begin{figure}[!htb]
\centering
\begin{tabular}{cc}
(a)&(b)\\
\includegraphics[width=0.45\textwidth]{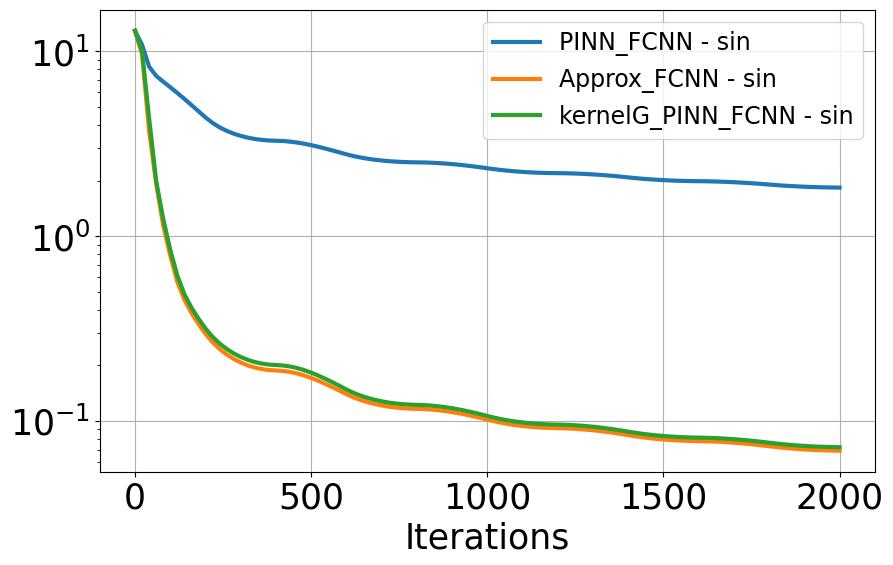}&
\includegraphics[width=0.45\textwidth]{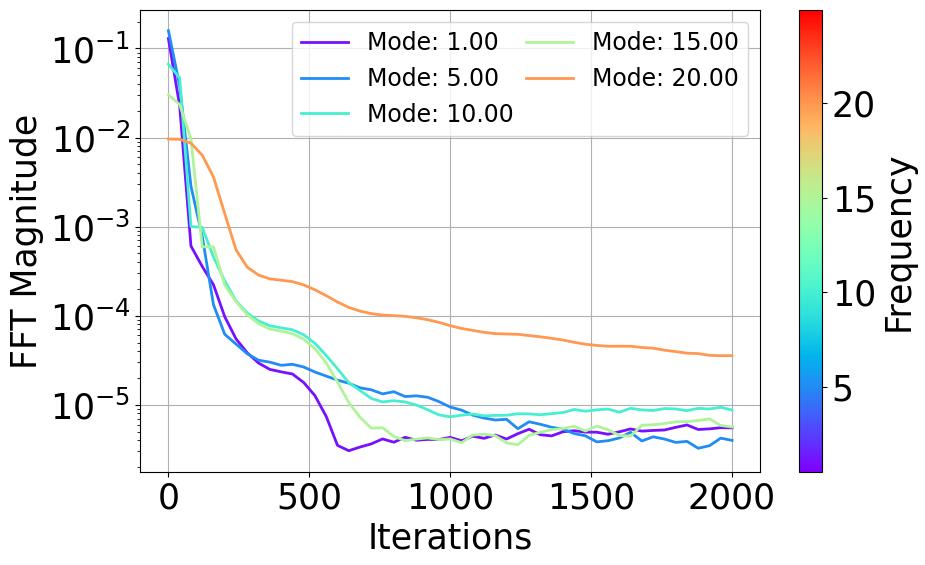}\\[3pt]
(c)&(d)\\
\includegraphics[width=0.45\textwidth]{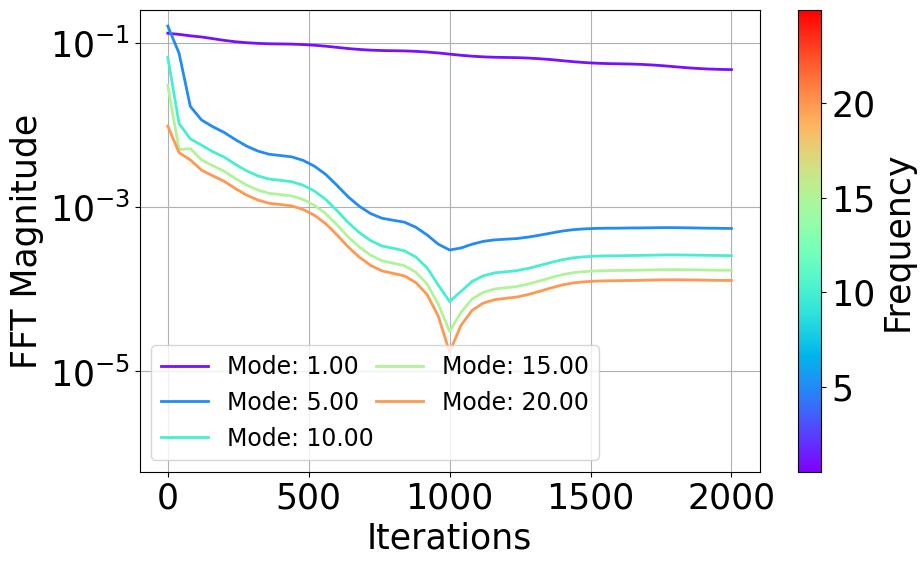}&
\includegraphics[width=0.45\textwidth]{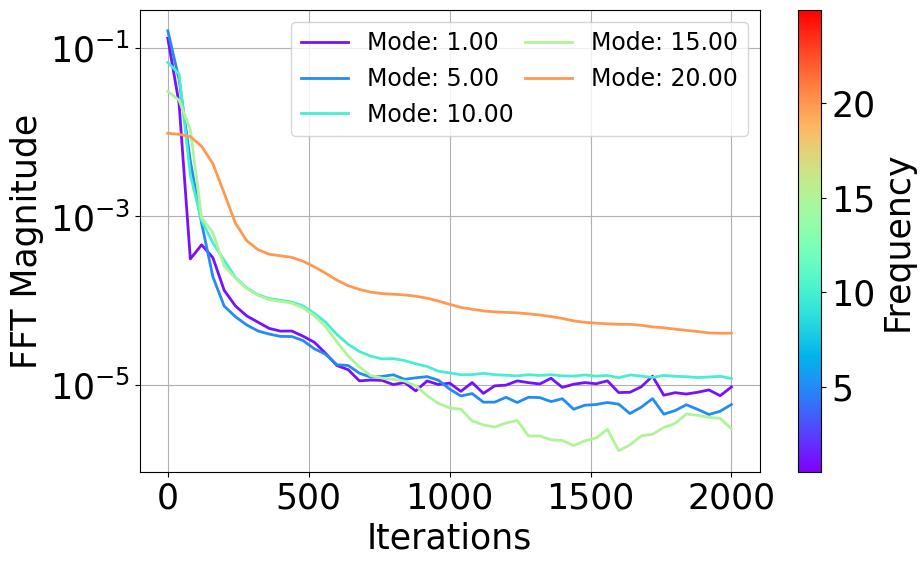}
\end{tabular}
\caption{Example 5: Comparison of $L^2$ error and errors in selected frequency modes in training for different models.  
(a) Training loss for all models;  
(b) Function approximation;  
(c) PINN;  
(d) Preconditioned PINN.}
\label{fig:example5_comparison}
\end{figure}

In this example, the errors in selected frequency modes  $1.0$, $5.0$, $10.0$, $15.0$, and $20.0$, are shown in Fig.~\ref{fig:example5_comparison}, representing a broader range than in Example 4. Since the localized function $u_2$
 contains quite high-frequency components, using a relatively shallow neural network for function approximation still shows mild low frequency bias and requires more iterations to capture the frequency mode 20, as shown in Fig.~\ref{fig:example5_comparison}(b). However, Fig.~\ref{fig:example5_comparison}(c) shows that, for the standard PINN, the presence of the differential operator changes the picture completely due to reweighting the frequency modes in the loss function and placing more emphasis on high-frequency components. As a result, it leads to strong high frequency bias and slow convergence of smooth components in training, which is unacceptable in real practice (similar to using finite element methods for elliptic PDEs without a preconditioner).  In contrast, the preconditioned PINN exhibits frequency mode dynamics that are nearly identical to those of the direct function approximation and consequently achieves much improved efficiency and accuracy in training. The training times are 32 seconds for direct function approximation and are comparable for the standard PINN and the preconditioned PINN (38 and 41 seconds, respectively). The tests again demonstrate the effectiveness of our proposed preconditioning strategy.

%This example demonstrates that preconditioning is particularly effective for solutions with localized high-frequency features, where standard PINNs fail catastrophically due to the compounded effects of spectral bias and differential operator structure.

\textbf{Example 6(a):} We consider an elliptic PDE with a rapidly oscillating variable coefficient: \begin{equation}\label{eq_PDE_multi}
\begin{cases}
    \nabla \cdot (k(x)\nabla u)=-1&\text{in}~\Omega\\
    u = 0&\text{on}~\partial\Omega
\end{cases}\;,
\end{equation}
where 
\begin{equation}
k(x) = \frac{2}{2 + A \sin\left( \frac{2\pi x}{\varepsilon} \right)} + 2 + A \cos\left( \frac{2\pi x}{\varepsilon} \right).
\end{equation}
In this test, we take $A = 1.8$ and $\varepsilon = 0.1$ which creates a rapidly oscillating diffusion coefficient with period $\varepsilon$, leading to a solution with both slow global variations and fast local oscillations, which is a classical homogenization problem. We use a FCNN of width 100 and depth 3, corresponding to  20,501 learnable parameters, to represent the solution.

This example is particularly challenging for PINNs due to the multiscale nature of the solution. We observe that the standard PINN suffers from strong high frequency bias and struggles to capture the low-frequency modes, resulting in a large relative $L^2$ error even after 20,000 epochs, with the error decaying very slowly. Despite the fact that the reference differential operator $\Delta$ differs from the exact differential operator in the equation, the frequency bias is well alleviated by the preconditioner, showing much less high frequency bias as reflected in the mode dynamics \ref{fig:example6a_summary}(d). This reduction in frequency bias is accompanied by a notable improvement in overall accuracy. Specifically, the relative $L^2$ error for the standard PINN with an FCNN is $9.61 \times 10^{-1}$, while the preconditioned PINN reduces the error to $1.80 \times 10^{-2}$. For comparison, the direct function approximation model using an FCNN achieves a relative $L^2$ error of $1.88 \times 10^{-3}$. 
% Training for 20,000 epochs requires 227 seconds for a direct approximation, 298 seconds for the standard PINN, and 327 seconds for the preconditioned PINN.

\begin{figure}[!htbp]
\centering
\begin{tabular}{cc}
(a)&(b)\\
\includegraphics[width=0.45\textwidth]{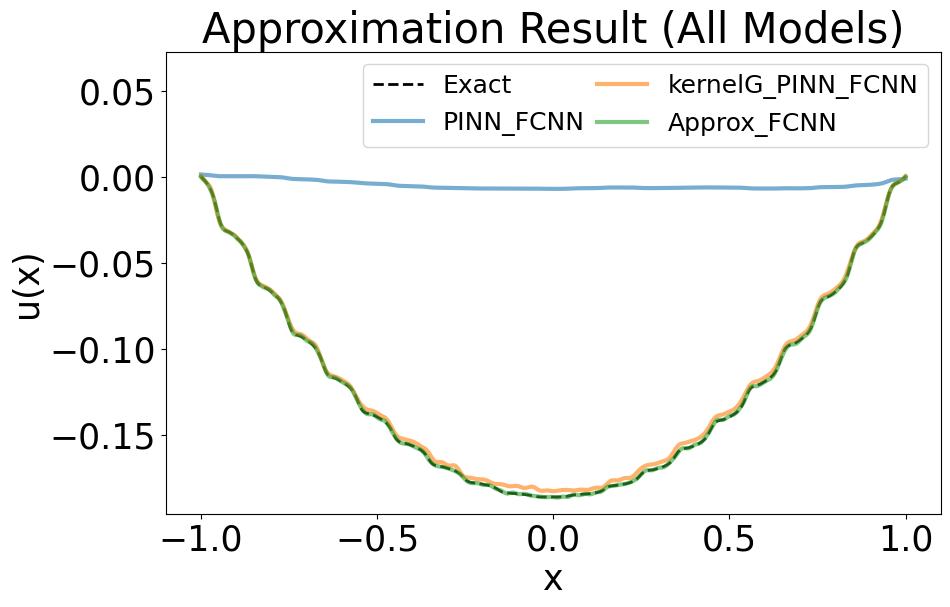}&
\includegraphics[width=0.45\textwidth]{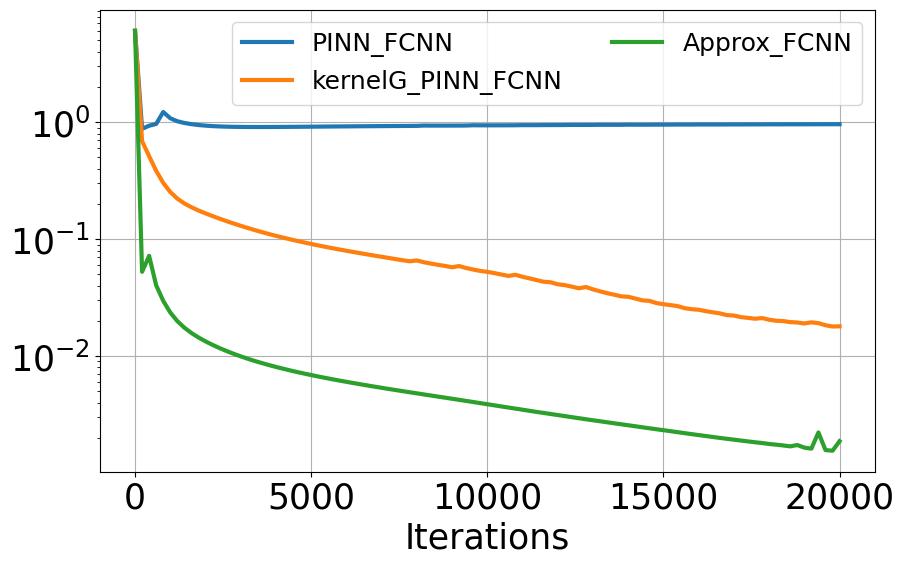}\\[3pt]
\end{tabular}
\begin{tabular}{ccc}
(c)&(d)&(e)\\
\includegraphics[width=0.33\textwidth]{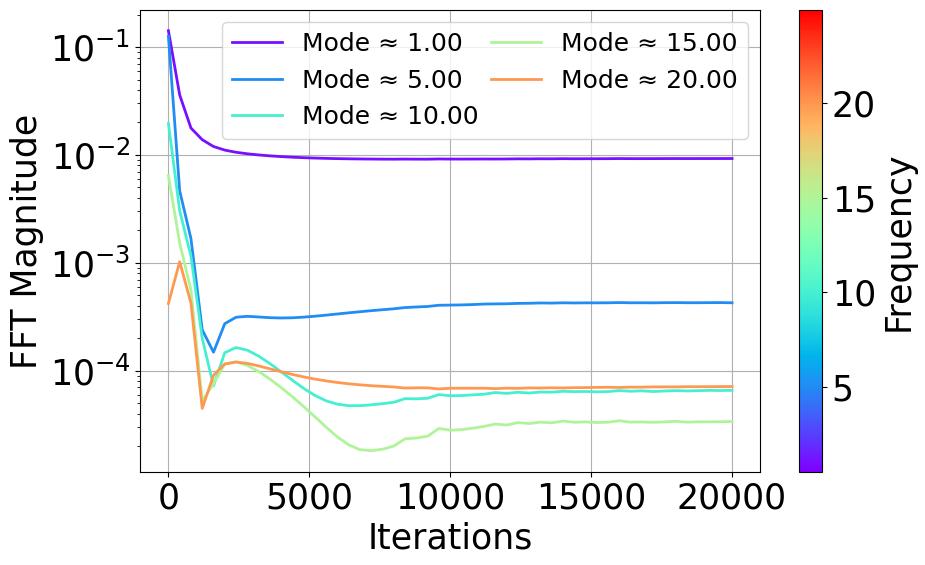}&
\includegraphics[width=0.33\textwidth]{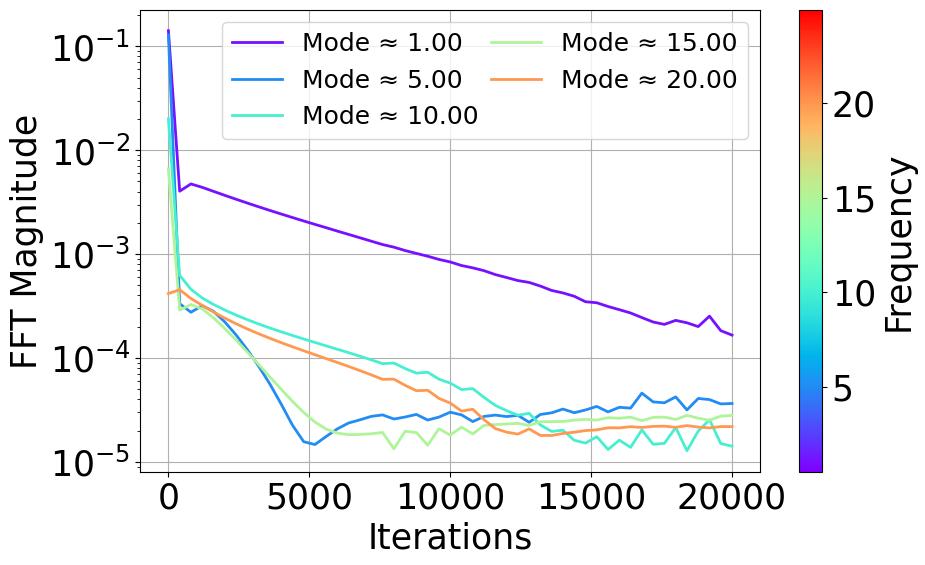}&
\includegraphics[width=0.33\textwidth]{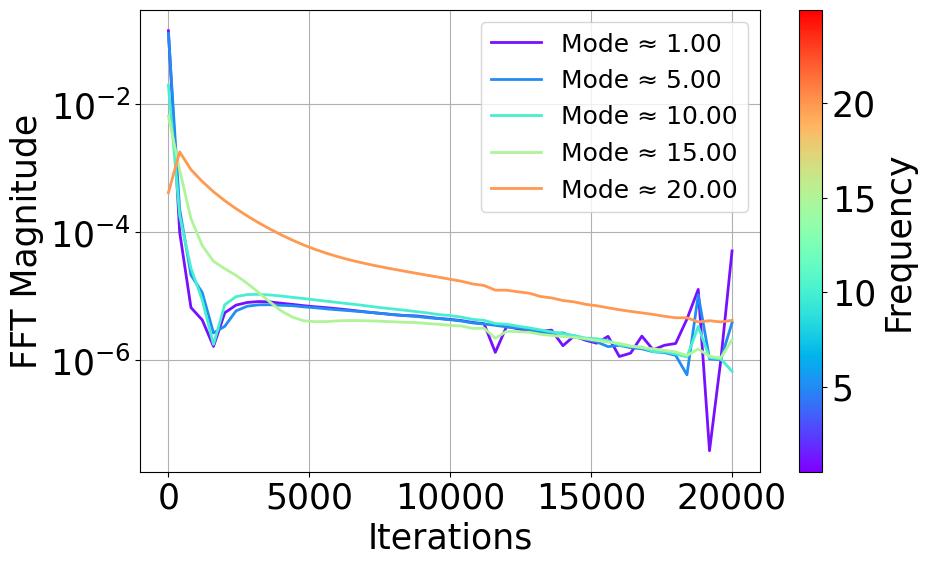}
\end{tabular}
 \caption{Example 6(a): (a) Exact solution and numerical results using FCNN; (b) $L^2$ relative error history; Errors in selected modes for PINN (c), preconditioned PINN (d), and approximation (e). }
    \label{fig:example6a_summary}
    \end{figure}

\textbf{Example 6(b):} In this example, we evaluate the use of a Fourier MMNN ~\cite{zhang2025fourier} (FMMNN) with width 50, rank 20, and depth 3 for function approximation or to solve the same variable coefficient equation from Example 6(a).

Similar phenomena as in Example 6(a) are observed. The MMNN architecture achieves the same level of accuracy with significantly fewer learnable parameters, with 2,091 parameters compared to 20,501 parameters for the FCNN, which is approximately a tenfold reduction. Despite this substantial decrease in model size, the same qualitative trends persist. In particular, the standard PINN with an MMNN exhibits a large relative $L^2$ error of $9.78 \times 10^{-1}$, while the preconditioned PINN reduces the error to $3.07 \times 10^{-2}$. For comparison, the MMNN-based approximation model achieves a relative $L^2$ error of $4.16 \times 10^{-3}$. 
This demonstrates that architectural innovations and preconditioning are complementary strategies for improving PINN performance, regardless of whether an FCNN or MMNN is employed. 
% Training for 20,000 epochs requires 243 seconds for a direct approximation, 348 seconds for the standard PINN, and 358 seconds for the preconditioned PINN.

\begin{figure}[!htbp]
\centering
\begin{tabular}{cc}
(a)&(b)\\
\includegraphics[width=0.45\textwidth]{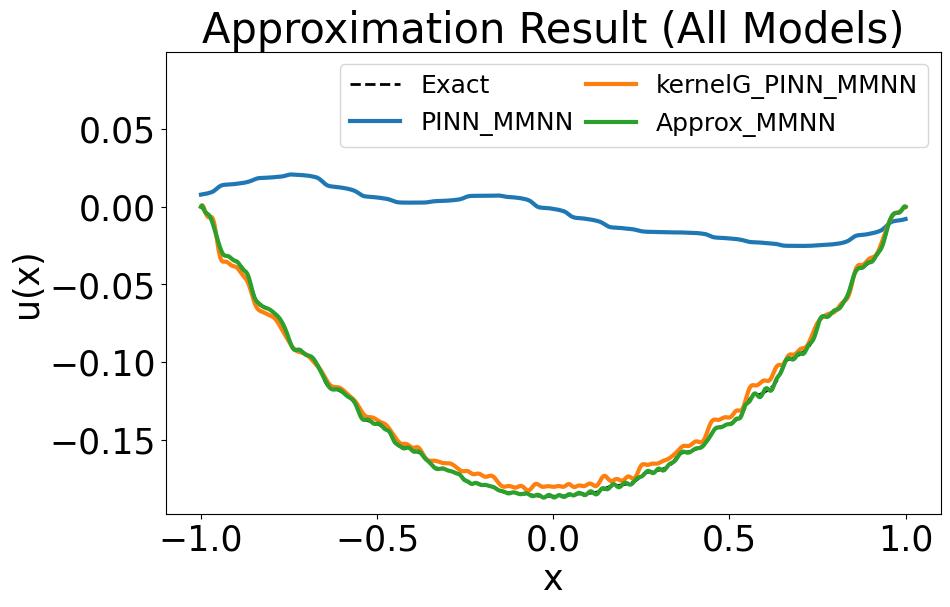}&
\includegraphics[width=0.45\textwidth]{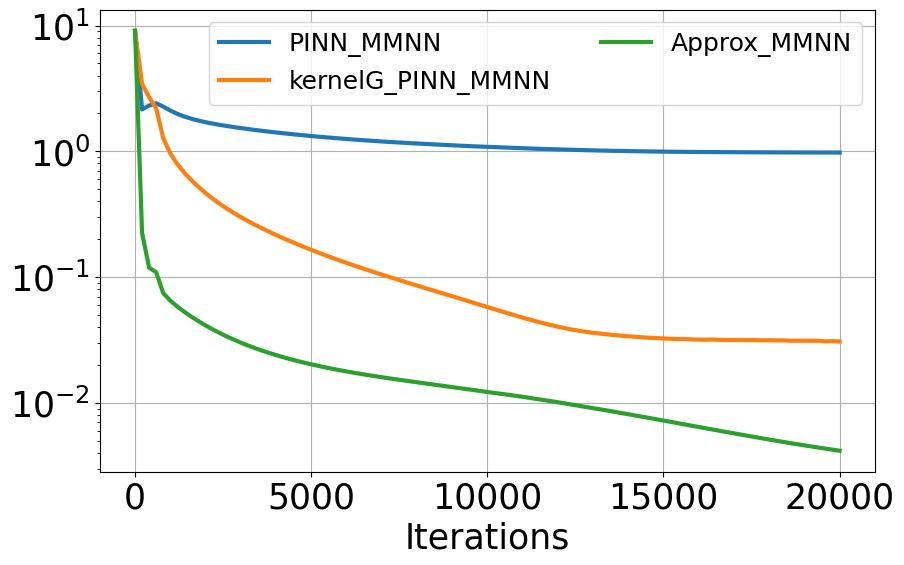}\\[3pt]
\end{tabular}
\begin{tabular}{ccc}
(c)&(d)&(e)\\
\includegraphics[width=0.33\textwidth]{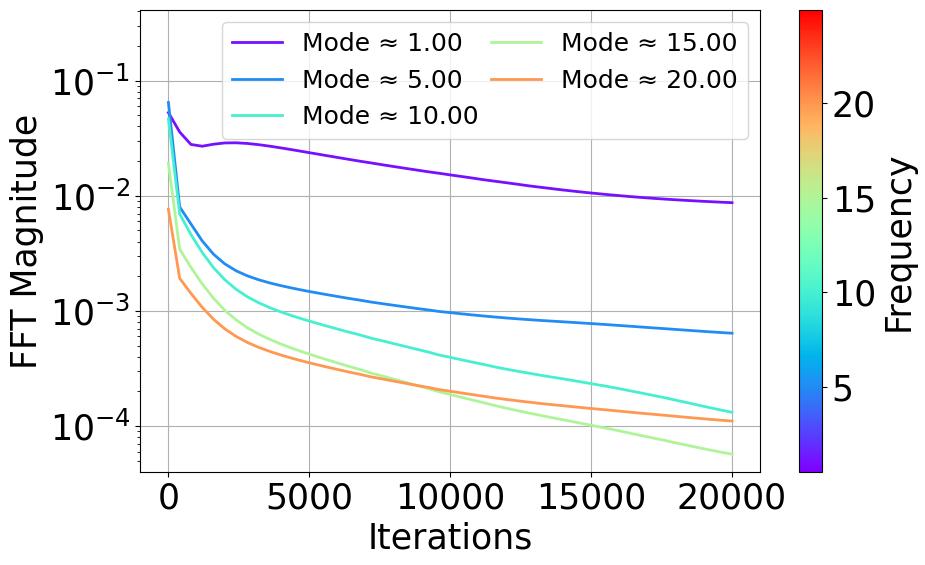}&
\includegraphics[width=0.33\textwidth]{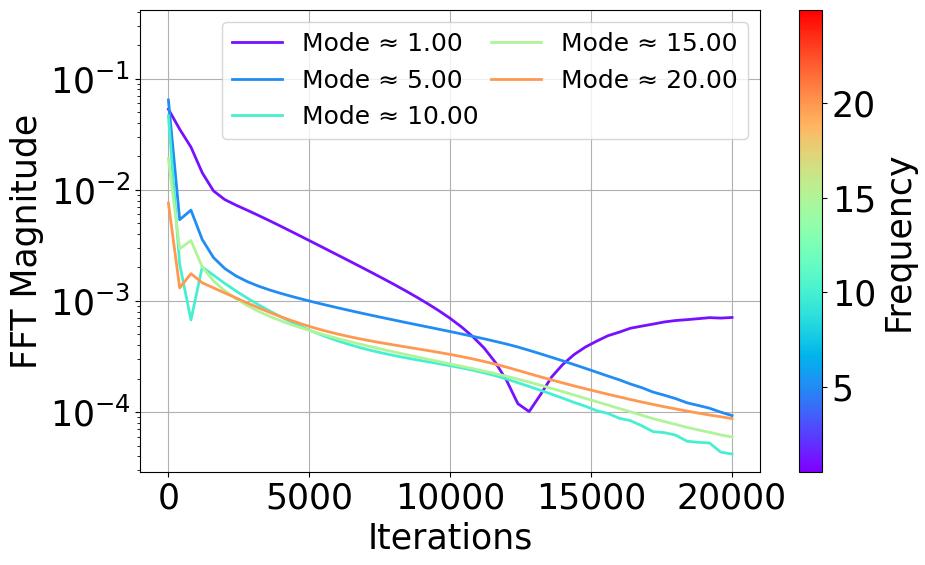}&
\includegraphics[width=0.33\textwidth]{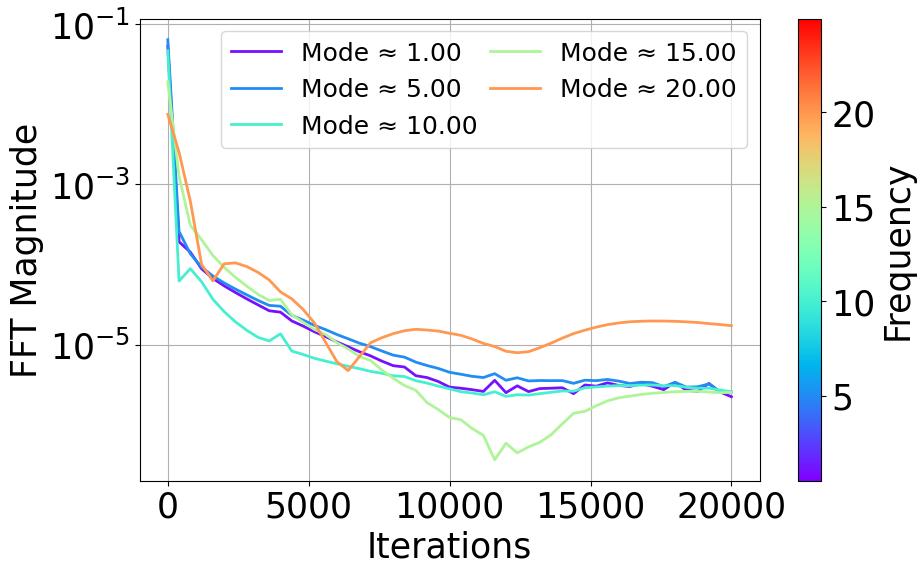}
\end{tabular}
 \caption{Example 6(b): (a) Exact solution and numerical results using MMNN; (b) $L^2$ relative error history; Errors in selected modes for PINN (c), preconditioned PINN (d), and approximation (e). }
\label{fig:example11_summary}
    \end{figure}

As demonstrated in Example~6, the MMNN architecture is able to achieve comparable accuracy with significantly fewer learnable parameters.  We adopt the MMNN architecture for all subsequent two-dimensional numerical experiments.

In the two-dimensional setting, the frequency bias is analyzed using a two-dimensional discrete Fourier transform of the error. Each Fourier coefficient corresponds to a pair of frequency modes $(k_x,k_y)$ in the $x$- and $y$-directions respectively. To visualize the frequency-dependent learning dynamics, we track a selected set of Fourier modes $(k_x,k_y) = (1,0), (2,0), (3,0), (4,0)$, which correspond to oscillations along the $x$-direction. Monitoring the errors in these modes over training iterations enables direct comparison with the one-dimensional frequency analysis. At the same time, the spatial error will also be plotted to offer a global view of the approximation quality and reveal error patterns in the learned solution. Together, the selected mode dynamics and spatial error visualization provide complementary insight into frequency bias in the two-dimensional setting.

\textbf{Example 7:} We solve the Poisson equation $\Delta u = -f$ in $\Omega = [-1,1]\times[-1,1]$ with homogeneous Dirichlet boundary conditions, and the source function $f$ is chosen such that the exact solution is
% \[
% \begin{aligned}
% u_3(x_1, x_2) =\; &1.0 \cdot \phi(x_1, x_2; 0.0, 0.0, 0.8) \\
% &+ 0.7 \cdot \phi(x_1, x_2; 0.6, 0.6, 0.2) \\
% &+ 0.5 \cdot \phi(x_1, x_2; -0.5, 0.3, 0.1) \\
% &+ 0.3 \cdot \phi(x_1, x_2; 0.2, -0.7, 0.1),
% \end{aligned}
% \]
% where $\phi$ is a smooth bump function with center $(c_1,c_2)$ and scale $s$:
% \[
% \phi(x_1, x_2; c_1, c_2, s) =
% \begin{cases}
% \exp\left( -\dfrac{1}{1 - \left( \dfrac{(x_1 - c_1)^2 + (x_2 - c_2)^2}{s^2} \right)} \right), & \text{if } \dfrac{(x_1 - c_1)^2 + (x_2 - c_2)^2}{s^2} < 1 \\
% 0, & \text{otherwise.}
% \end{cases}
% \]
\[
\begin{aligned}
u_3(x) =\;
&1.0 \cdot \phi(x; c^{(1)}, 0.8) \\
&+ 0.7 \cdot \phi(x; c^{(2)}, 0.2) \\
&+ 0.5 \cdot \phi(x; c^{(3)}, 0.1) \\
&+ 0.3 \cdot \phi(x; c^{(4)}, 0.1),
\end{aligned}
\]
where
\[
c^{(1)} = (0.0, 0.0), \quad
c^{(2)} = (0.6, 0.6), \quad
c^{(3)} = (-0.5, 0.3), \quad
c^{(4)} = (0.2, -0.7),
\]
and $\phi$ is the smooth compactly supported bump function
\[
\phi(x; c, s) =
\begin{cases}
\displaystyle
\exp\!\left(
-\dfrac{1}{1 - \dfrac{|x - c|^2}{s^2}}
\right),
& \text{if } |x - c| < s, \\[1.2em]
0, & \text{otherwise}.
\end{cases}
\]

This creates a solution with four localized bumps of different amplitudes and scales, as shown in Fig.~\ref{fig:example22_losshistory}(a). The largest bump at the origin has scale $0.8$, while the smallest bumps have scale $0.1$, creating a multi-scale challenge. We use an MMNN architecture of width $200$, rank 20 and depth 3, corresponding to 8,241 learnable parameters, and train with 10,000 collocation points.

\begin{figure}[!htb]
\centering
\begin{tabular}{cc}
(a)&(b)\\
\includegraphics[width=0.33\textwidth]{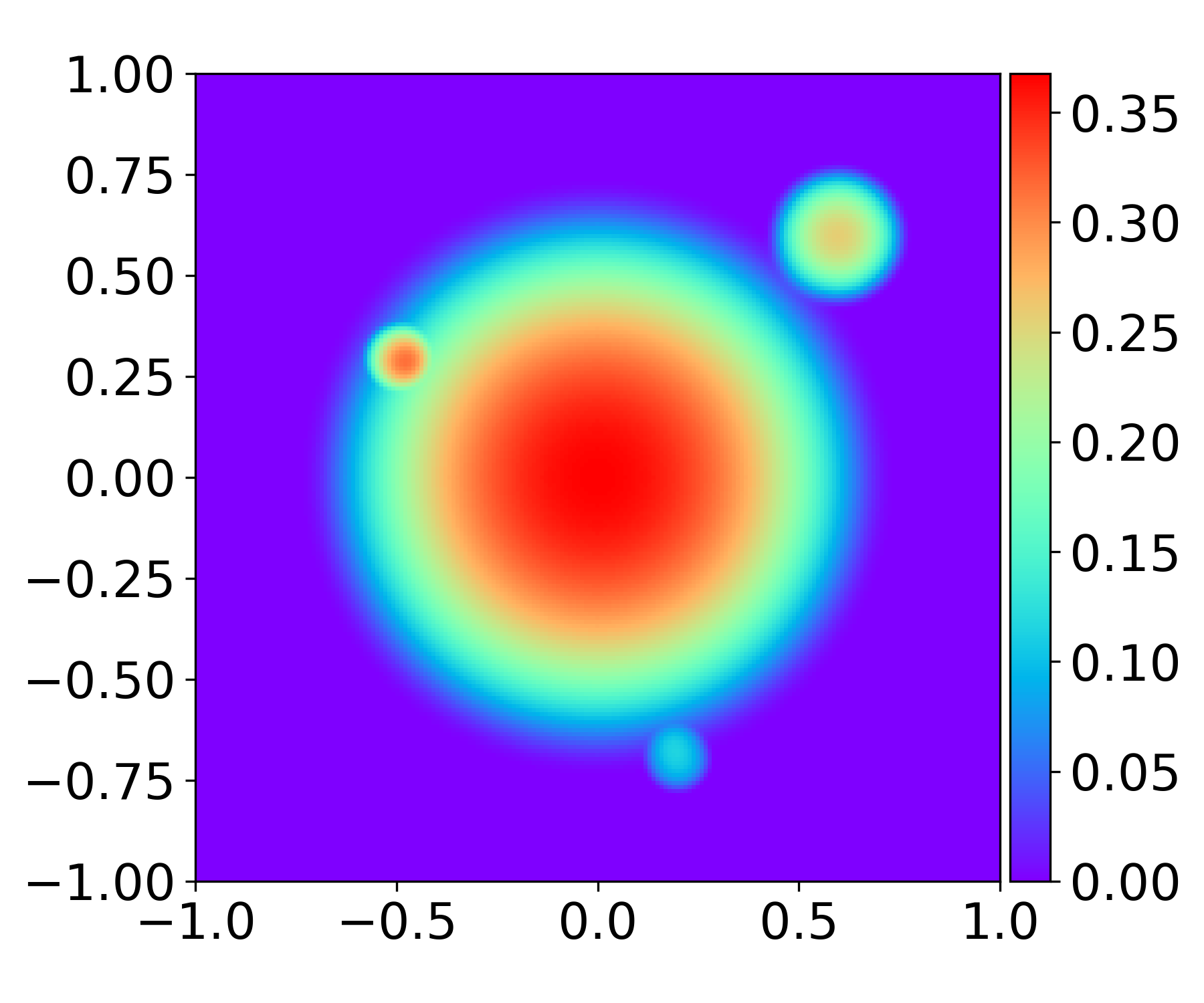}&
\includegraphics[width=0.45\textwidth]{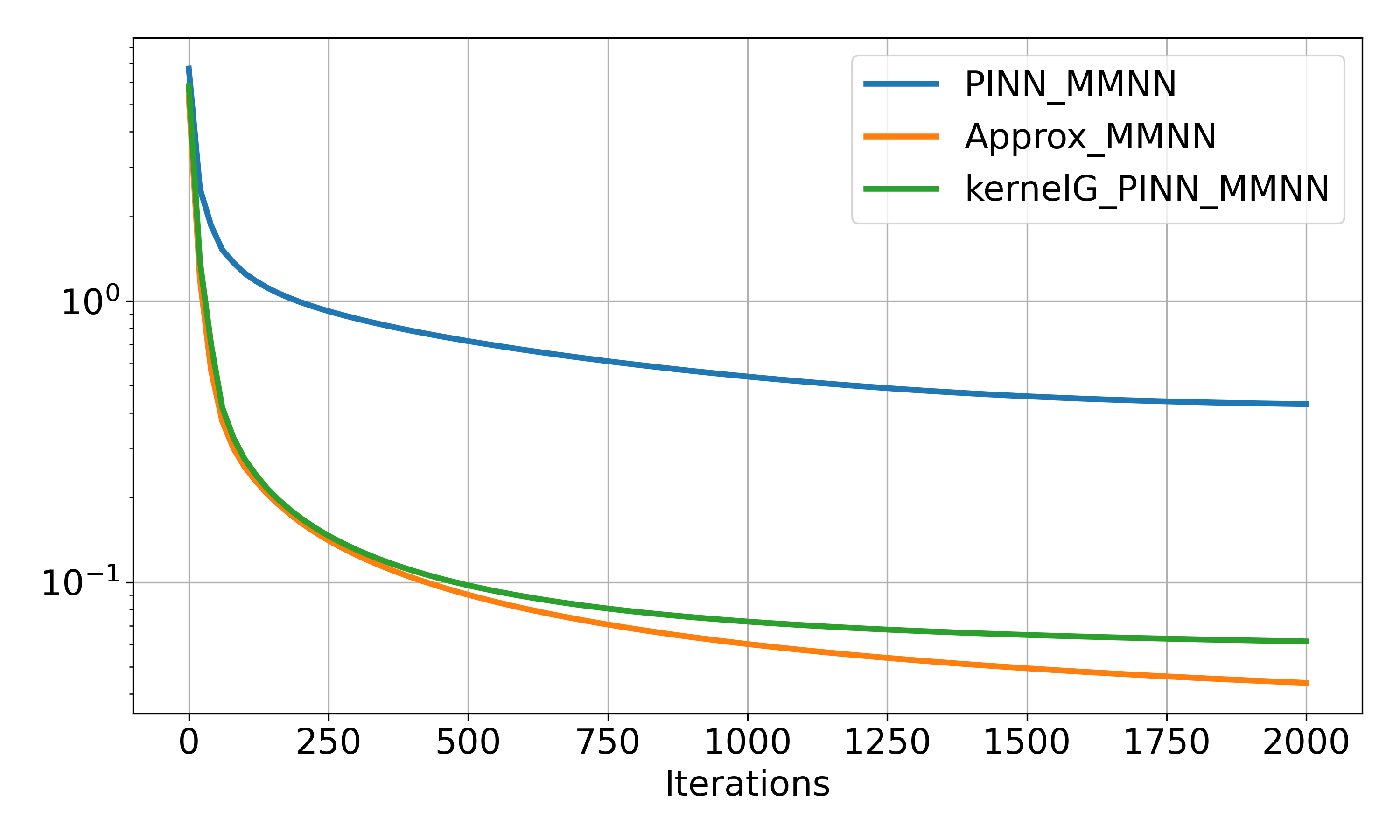}
\end{tabular}
\caption{Example 7: (a) Exact solution; (b) Relative $L^2$ error during training for different models.}\label{fig:example22_losshistory}
    \end{figure}

\begin{figure}[!htb]
\centering
\begin{tabular}{ccc}
(a)&(b)&(c)\\
\includegraphics[width=0.33\linewidth]{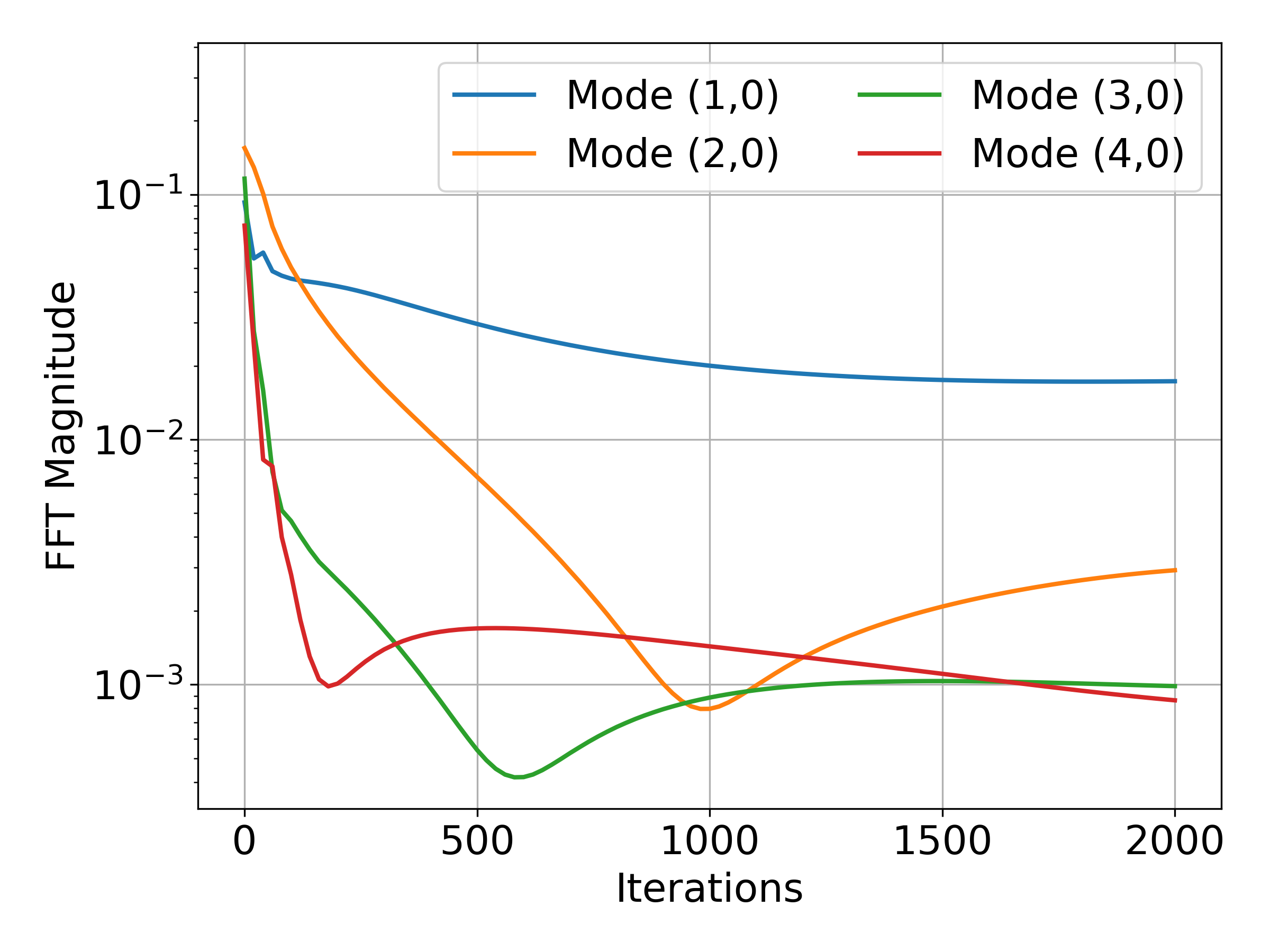}&
\includegraphics[width=0.33\linewidth]{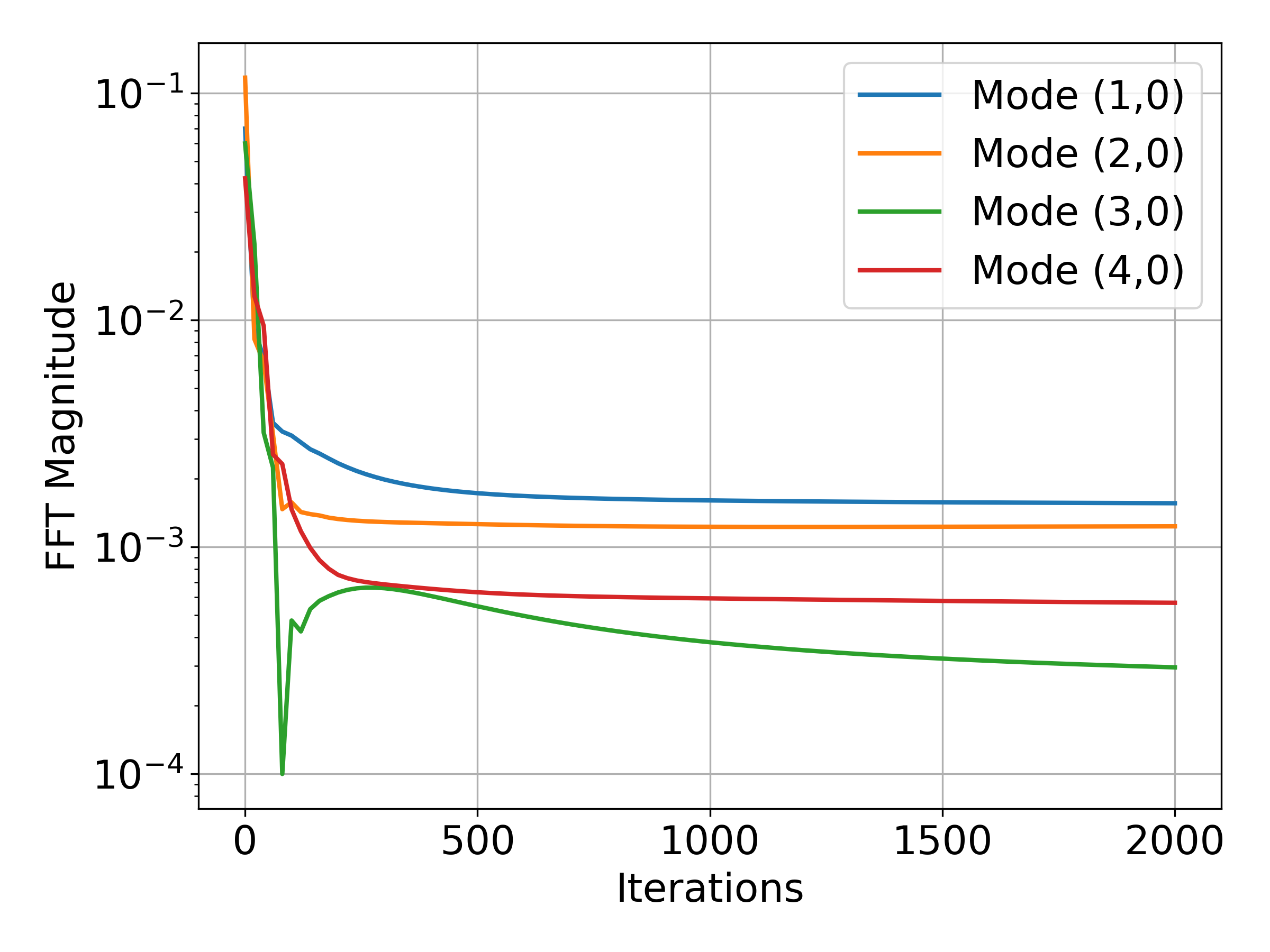}&
\includegraphics[width=0.33\linewidth]{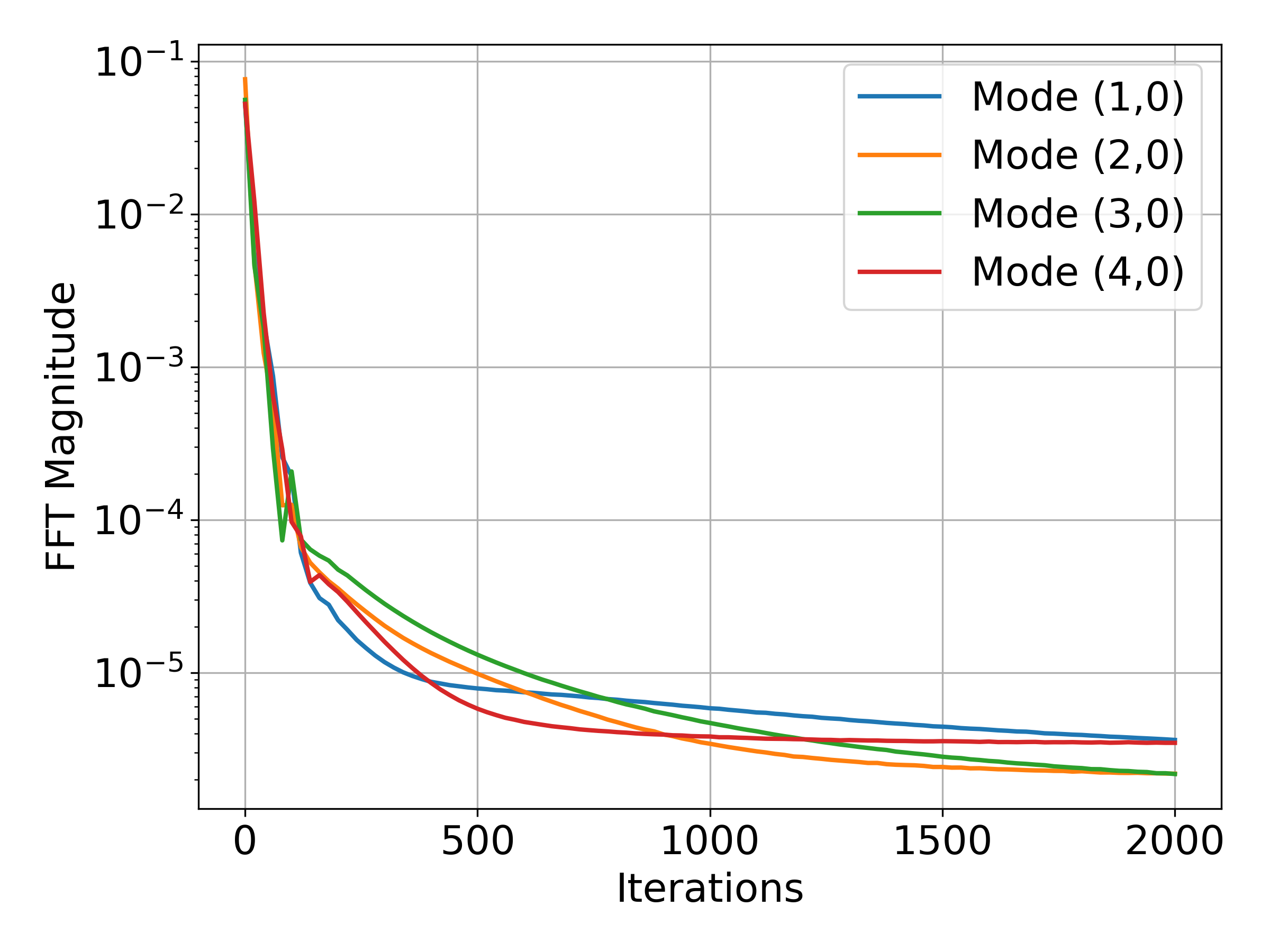}\\
(d)&(e)&(f)\\
\includegraphics[width=0.33\linewidth]{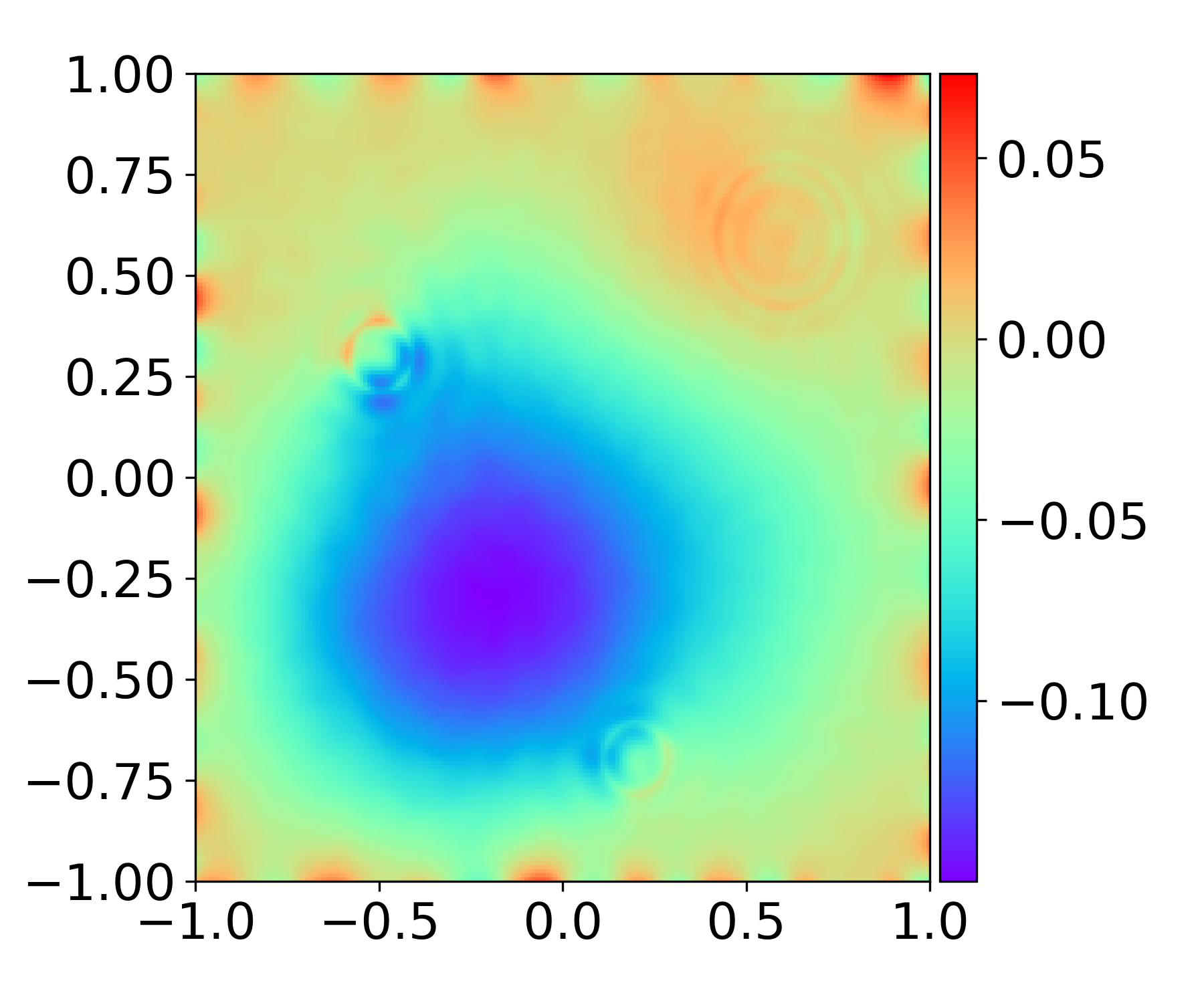}&
\includegraphics[width=0.33\linewidth]{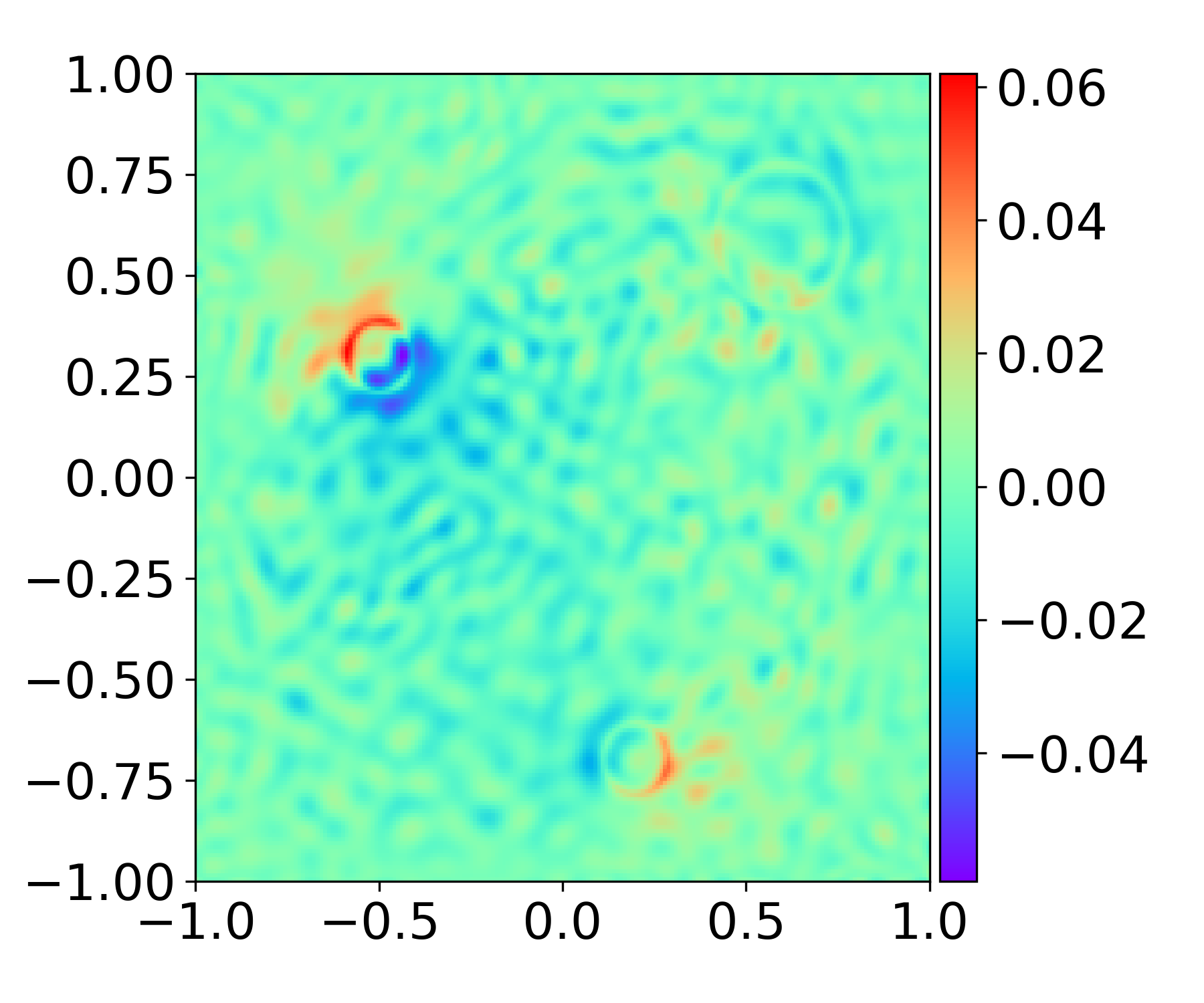}&
\includegraphics[width=0.33\linewidth]{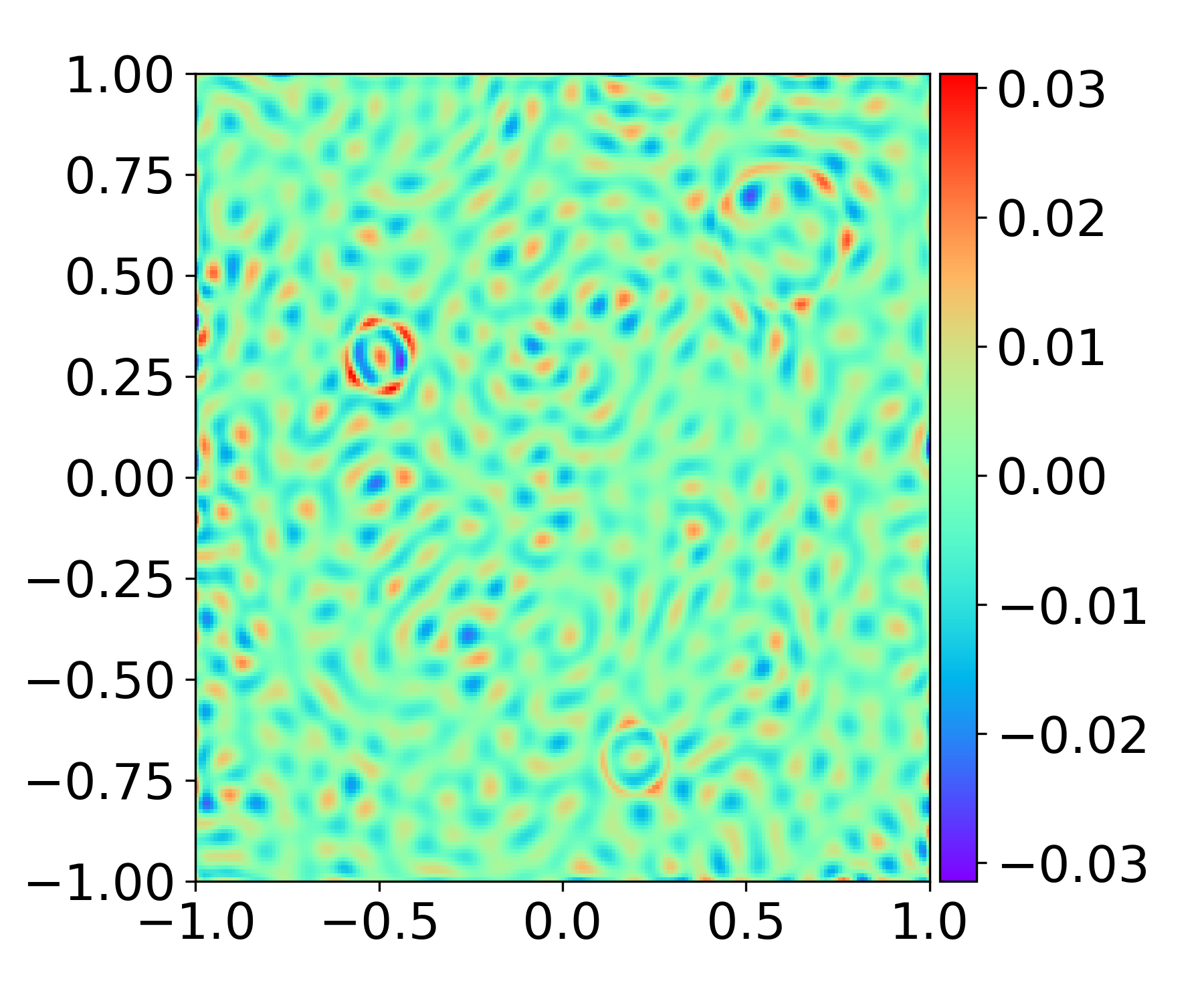}

\end{tabular}
  \caption{Example 7: Errors in selected modes for PINN (a), preconditioned PINN (b) and approximation (c); Spatial error distributions for PINN (d), preconditioned PINN (e) and  approximation (f).}
    \label{fig:example22_fourier_and_difference}
\end{figure}

Fig.~\ref{fig:example22_losshistory}(b) shows dramatic different decay in $L^2$ errors during training across the three tasks. When approximating the function directly, the MMNN achieves the fastest convergence, reaching a relative $L^2$ error of $4.38\times10^{-2}$ in 2,000 epochs. In contrast, the standard PINN exhibits much slower convergence and plateaus at a larger error, with a relative $L^2$ error of $4.30\times10^{-1}$. The corresponding training times are 217 seconds for direct approximation, 259 seconds for the preconditioned PINN, and 256 seconds for the standard PINN. The preconditioned PINN has a significantly improved performance over the standard PINN, reducing the relative $L^2$ error to $6.15\times10^{-2}$, comparable to direct function approximation. These results and their underlying mechanism are revealed by the frequency bias shown in Figs.~\ref{fig:example22_fourier_and_difference}(a)–(c). Figs.~\ref{fig:example22_fourier_and_difference}(c) shows that FMMNN has little frequency bias and excellent convergence across all modes, which correlates well with its overall performance in $L^2$ error. Due to the weak frequency bias of FMMNN as a representation, the involvement of differential operators in a standard PINN model will induce strong high frequency bias, as shown in Figs.~\ref{fig:example22_fourier_and_difference}(a), where the error in the lowest frequency mode decays very slowly. In contrast, the preconditioned PINN (Fig.~\ref{fig:example22_fourier_and_difference}(b)) exhibits much more uniform convergence across all modes, similar to the function approximation task, again verifying the effectiveness of our preconditioning strategy.
%For the standard PINN (Fig.~\ref{fig:example22_fourier_and_difference}(a)), the lowest-frequency mode $(1,0)$ remains trapped at an error level of approximately $10^{-1}$, while higher-frequency modes converge to around $10^{-3}$, indicating an extreme high frequency bias. In contrast, the preconditioned PINN (Fig.~\ref{fig:example22_fourier_and_difference}(b)) exhibits much more uniform convergence across all modes, with errors settling near the $10^{-3}$ level. The approximation model (Fig.~\ref{fig:example22_fourier_and_difference}(c)) achieves the best spectral performance, with all modes converging to errors between $10^{-5}$ and $10^{-6}$. 
These phenomena are further corroborated by the corresponding spatial error distributions shown in Figs.~\ref{fig:example22_fourier_and_difference}(d)–(f): the error for standard PINN mainly concentrates on the largest bump, whereas the preconditioned PINN and direct function approximation yield more randomized error distributions, i.e, in high frequency modes. 

This example demonstrates that, when using well-designed NNs with good representation capability for high frequency modes and fine features (meaning weak frequency bias) to solve elliptic PDEs, proper preconditioning is crucial. Otherwise, it is difficult or costly to capture even a smooth solution using gradient-based training.     

\textbf{Example 8:} In this example, we solve the Poisson equation $\Delta u = -f$ in the unit disk $\Omega = B(0,1)\in\mathbb{R}^2$ with inhomogeneous Dirichlet boundary conditions. The exact solution is
\[
u_4(x) =
0.3 \cdot \sin\left(4\pi x_1 + 2\pi x_1 x_2\right) +
0.05 \cdot \sin\left(12\pi x_2 + 4\pi x_1 x_2\right),
\]
where $x = (x_1,x_2)$. We use an MMNN architecture of width 200, rank 40, and depth 3, corresponding to 16,281 learnable parameters, and train with 7,854 sample points.

\begin{figure}[!htb]
\centering
\begin{tabular}{cc}
(a)&(b)\\
\includegraphics[width=0.33\textwidth]{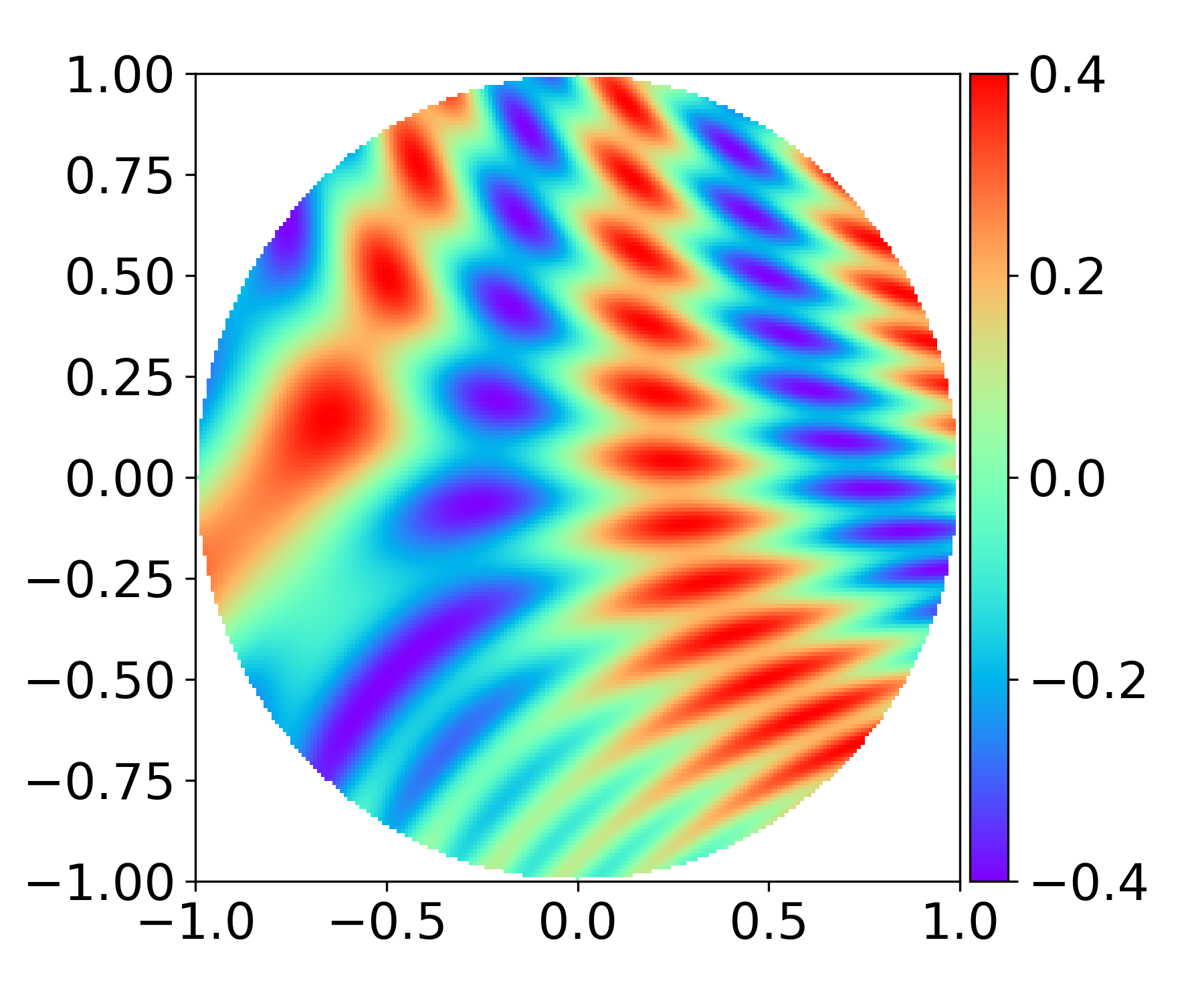}&
\includegraphics[width=0.45\textwidth]{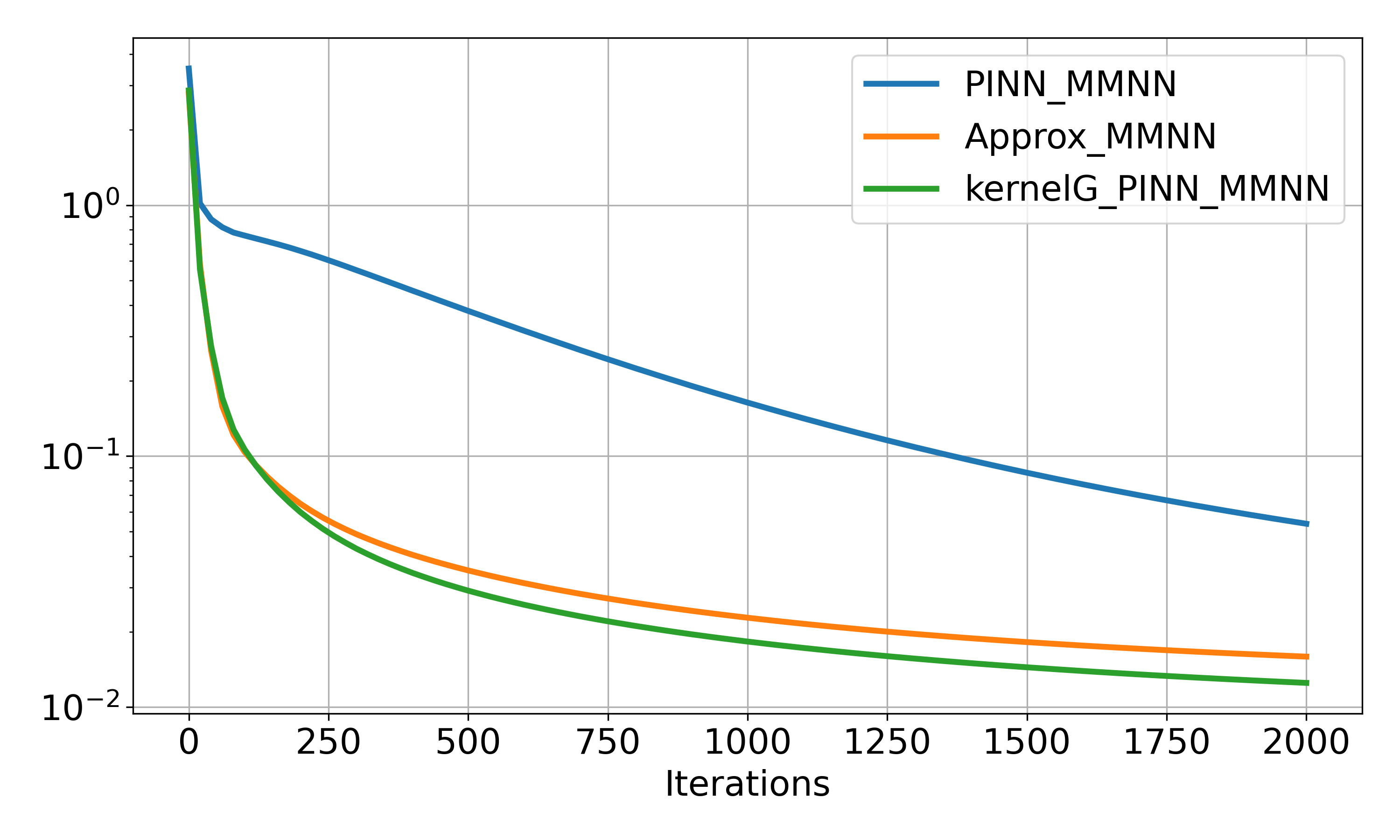}
\end{tabular}
 \caption{Example 8: (a) Exact solution; (b) Relative $L^2$ error during training for different models.}\label{Example7ab}
    \end{figure}

\begin{figure}[!htb]
\centering
\begin{tabular}{ccc}
(a)&(b)&(c)\\
\includegraphics[width=0.33\linewidth]{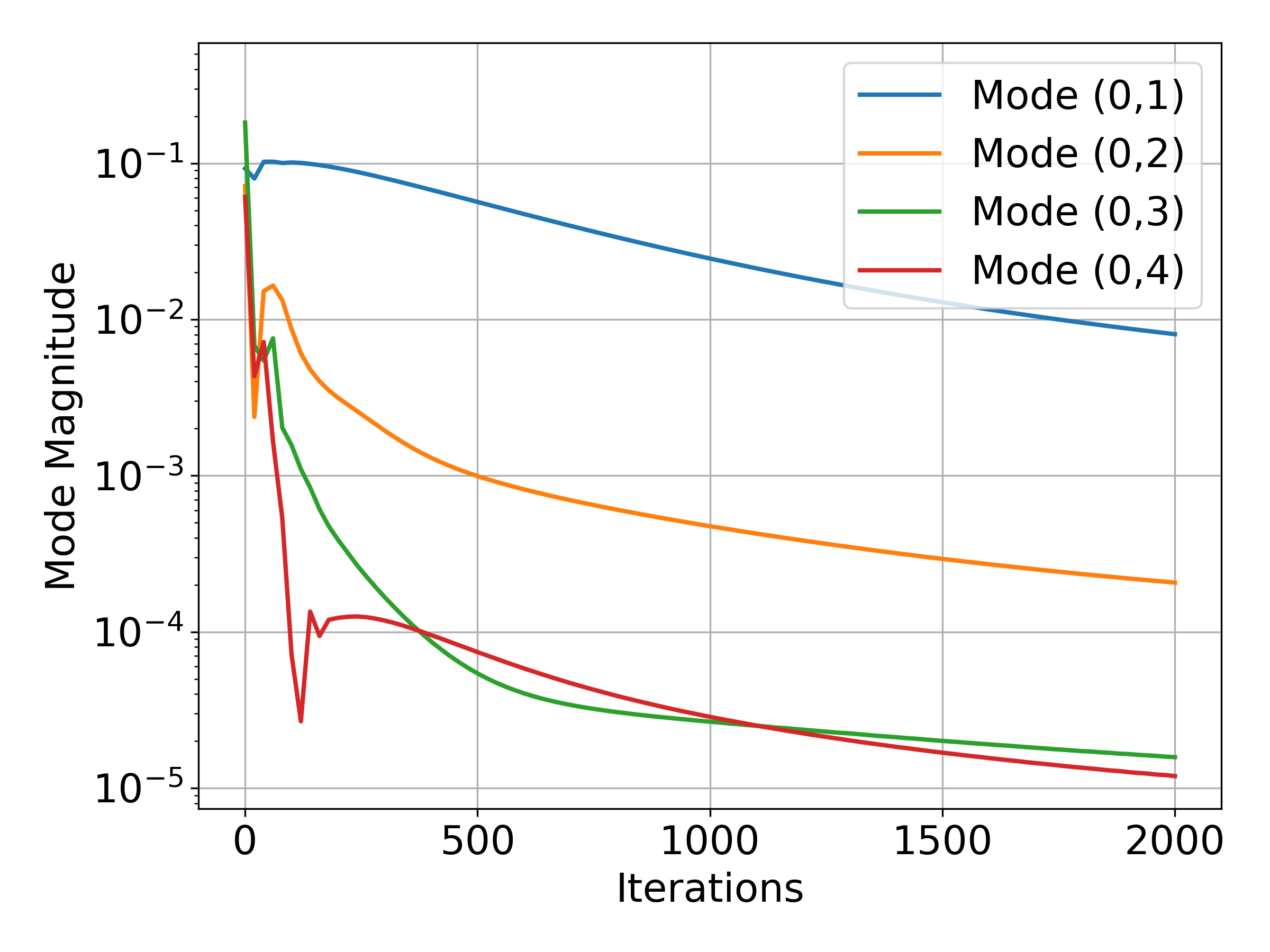}&
\includegraphics[width=0.33\linewidth]{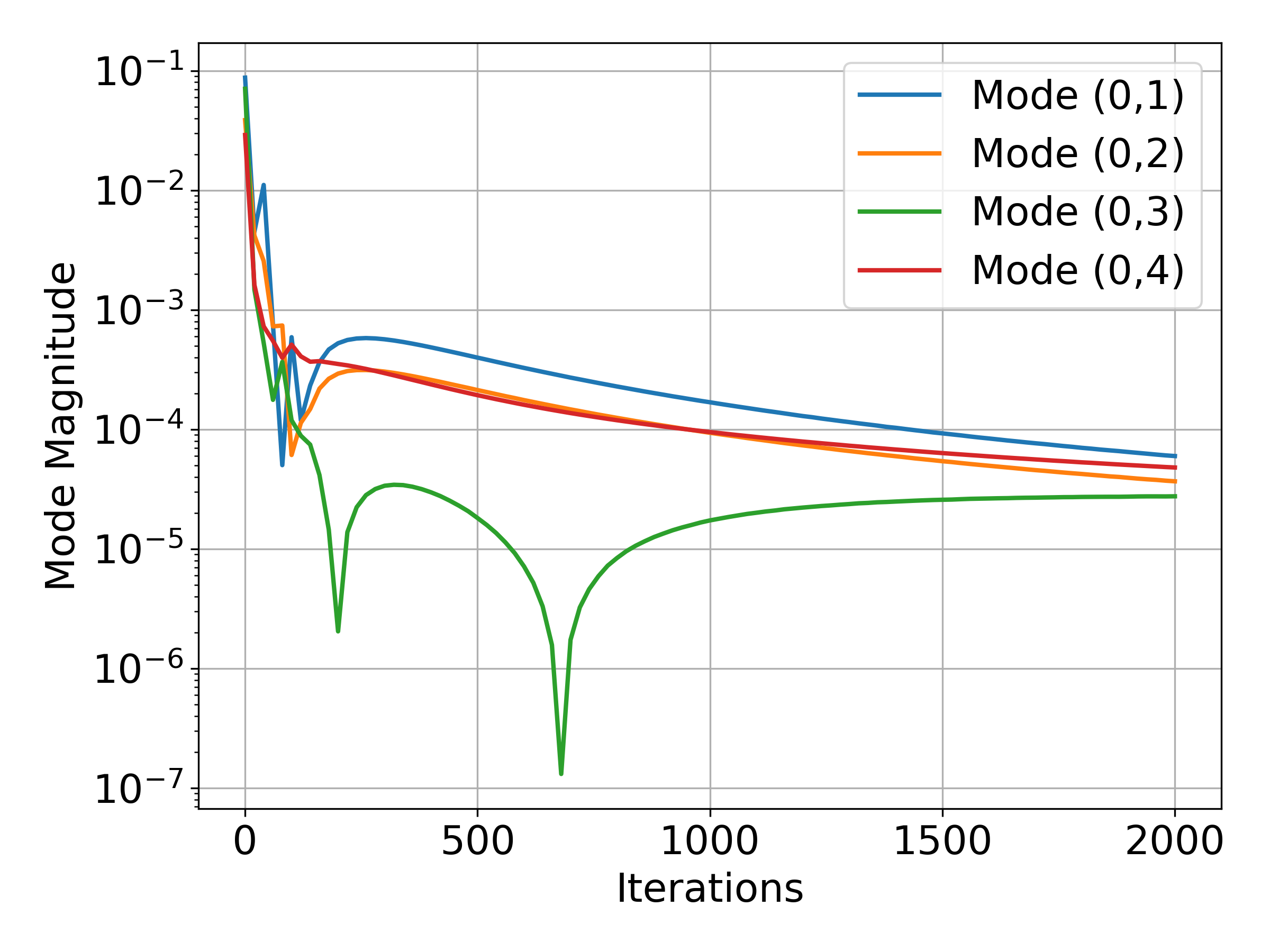}&
\includegraphics[width=0.33\linewidth]{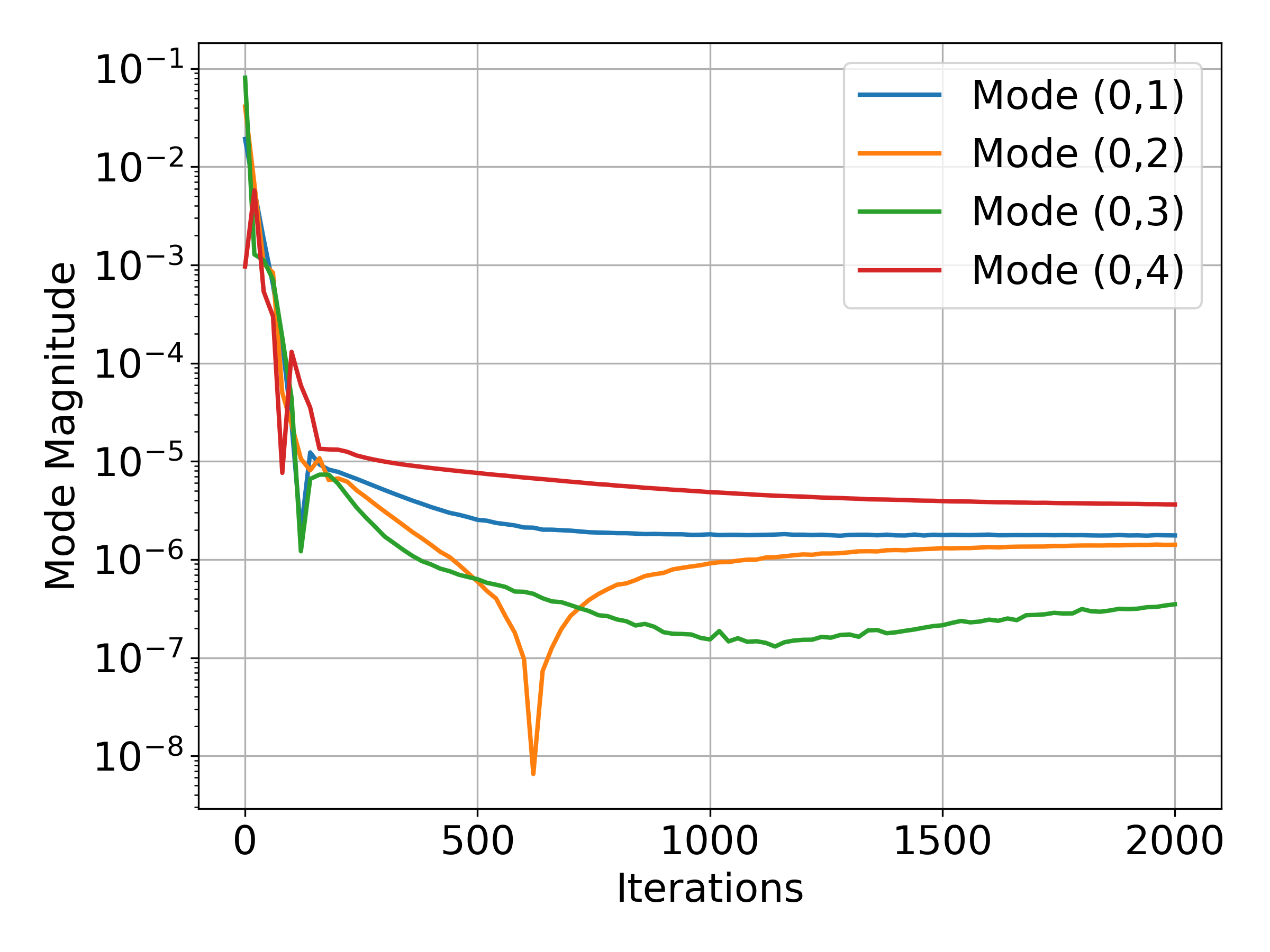}\\
(d)&(e)&(f)\\
\includegraphics[width=0.33\linewidth]{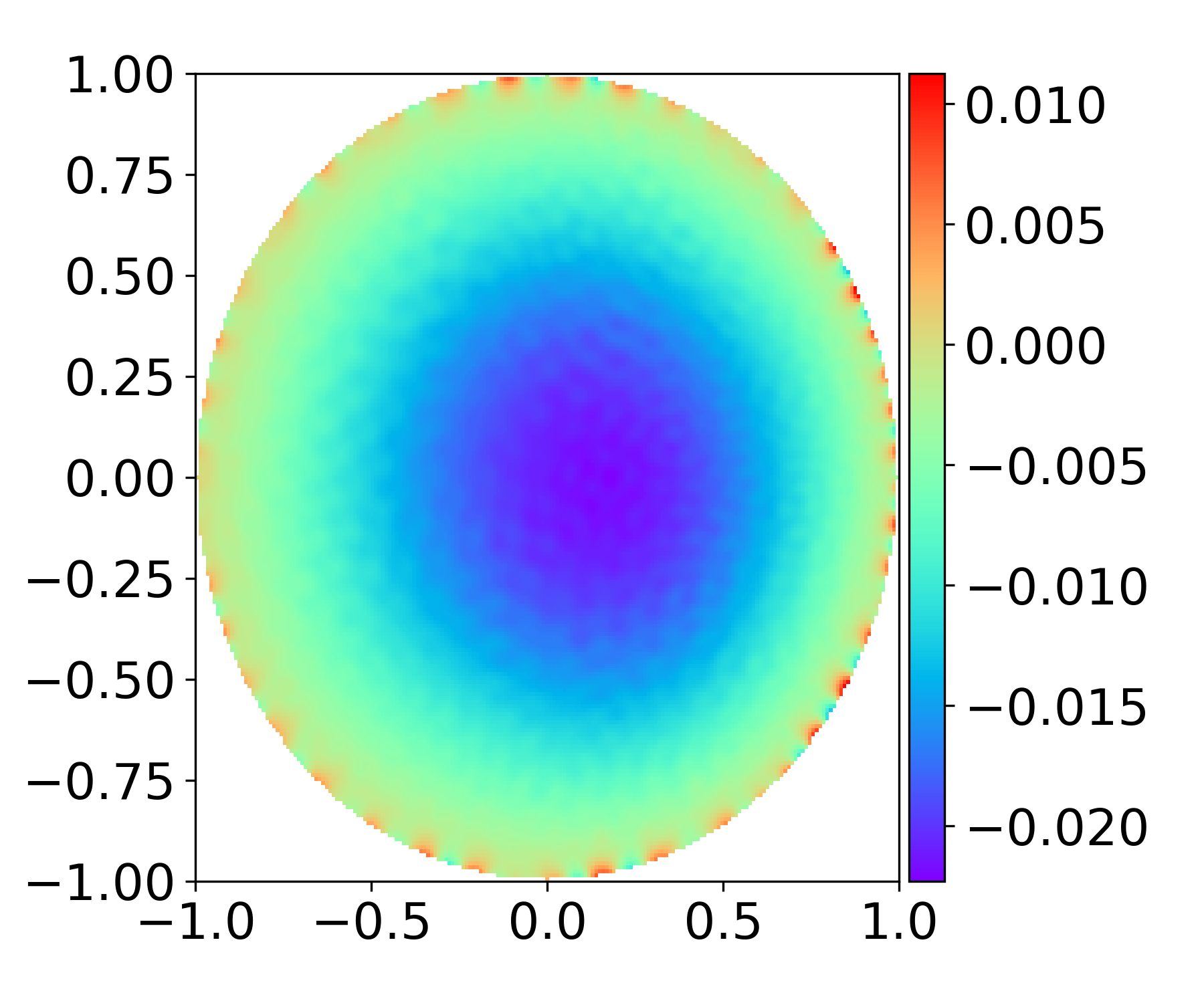}&
\includegraphics[width=0.33\linewidth]{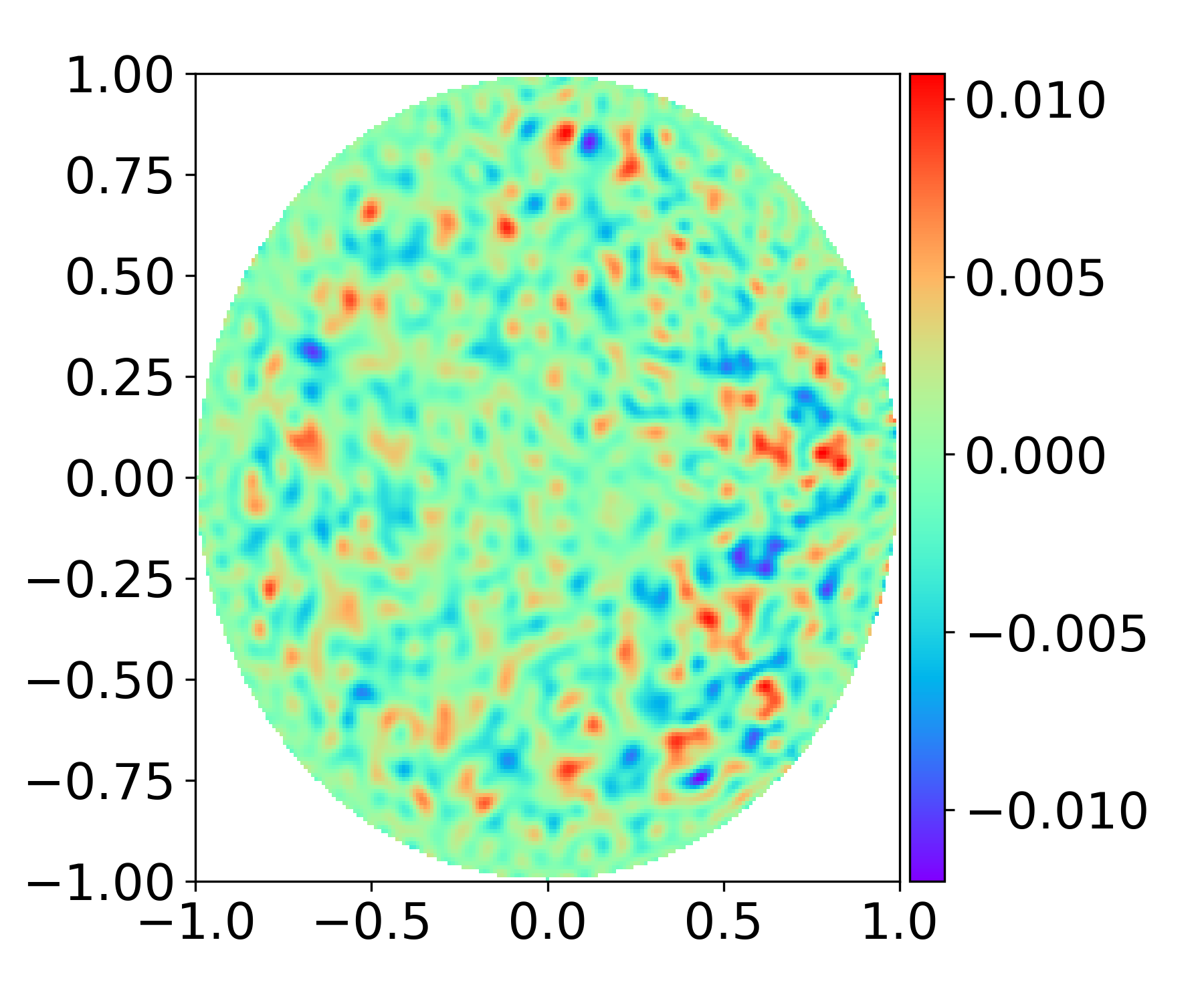}&
\includegraphics[width=0.33\linewidth]{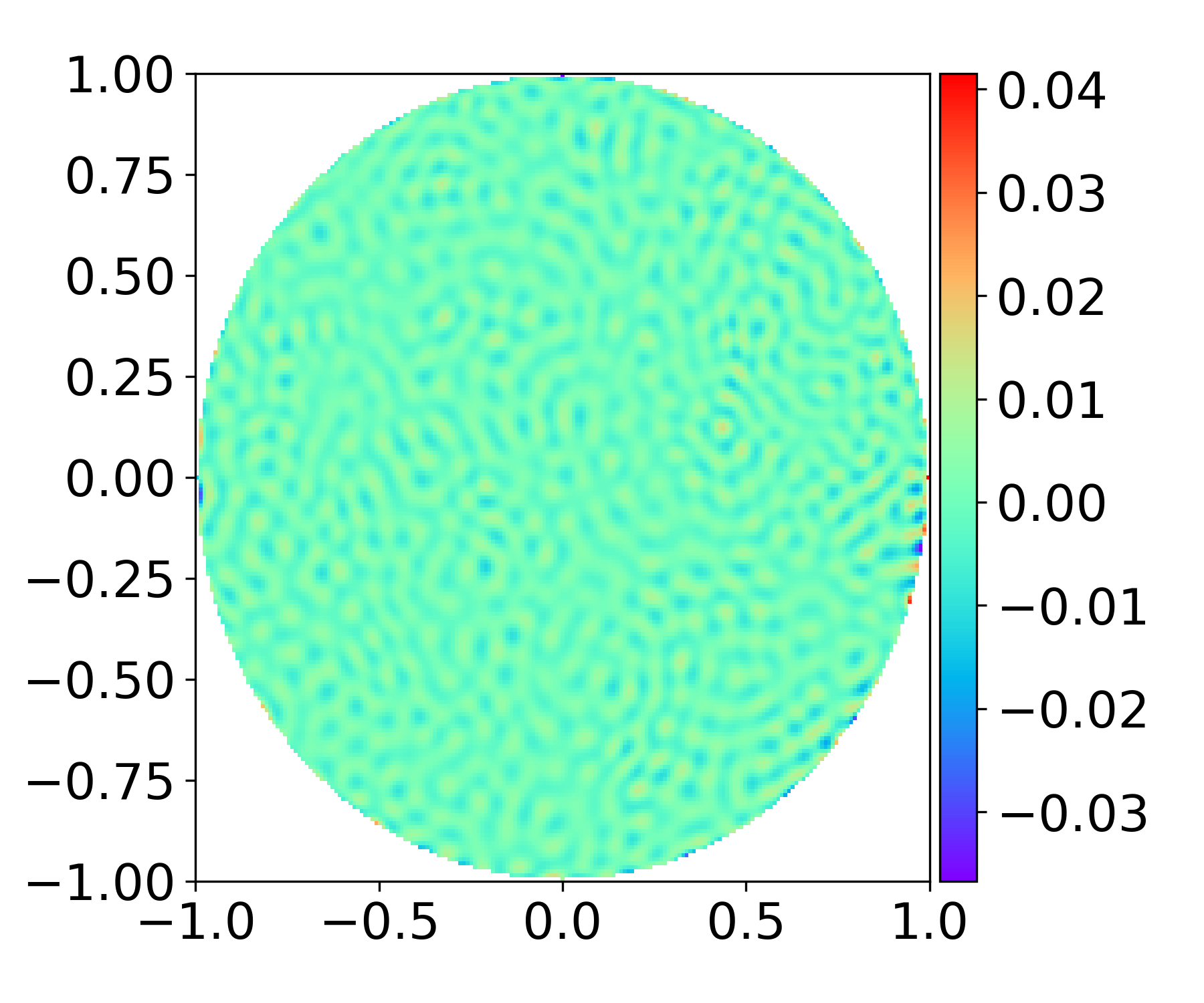}

\end{tabular}
  \caption{Example 8: Errors in selected modes for PINN (a), preconditioned PINN (b) and approximation (c); Spatial error distributions for PINN (d), preconditioned PINN (e) and  approximation (f).}
    \label{fig:example21_fourier_and_difference}
\end{figure}

The results are presented in Fig.~\ref{Example7ab} and Fig.~\ref{fig:example21_fourier_and_difference}, which show behaviors and patterns similar to those in Example 7. The use of NN representation allows one to deal with domain geometry easily. The corresponding training times are 231 seconds for direct approximation, 253 seconds for the preconditioned PINN, and 250 seconds for the standard PINN.

\textbf{Example 9:} We consider a two-dimensional multiscale variable-coefficient problem as follows.
\begin{equation*}\label{eq_PDE_multi}
\begin{cases}
    \nabla \cdot (k(x)\nabla u)=-1&\text{in}~\Omega=(-1,1)^2\\
    u = 0&\text{on}~\partial\Omega
\end{cases}\;,
\end{equation*}
where $x = (x_1,x_2)$ and the diffusion coefficient is defined by
\begin{equation*}
k(x) \;=\;
\frac{2 + A \sin\!\left(\dfrac{2\pi x_1}{\varepsilon}\right)}
     {2 + A \cos\!\left(\dfrac{2\pi x_2}{\varepsilon}\right)}
\;+\;
\frac{2 + A \sin\!\left(\dfrac{2\pi x_2}{\varepsilon}\right)}
     {2 + A \sin\!\left(\dfrac{2\pi x_1}{\varepsilon}\right)},
\end{equation*}
% where the diffusion coefficient is
% \begin{equation}
% k(x,y) \;=\; 
% \frac{2 + A \, \sin\!\left(\tfrac{2\pi x}{\varepsilon}\right)}
%      {2 + A \, \cos\!\left(\tfrac{2\pi y}{\varepsilon}\right)}
% \;+\;
% \frac{2 + A \sin\!\left(\tfrac{2\pi y}{\varepsilon}\right)}
%      {2 + A \, \sin\!\left(\tfrac{2\pi x}{\varepsilon}\right)},
% \end{equation}
with $A = 1.8$ and $\varepsilon =0.3$. This choice creates a diffusion coefficient with rapid oscillations in both directions. The PDE solution (Fig.\ref{fig:example23} (a)) exhibits a smooth macroscopic profile modulated by microscopic heterogeneity. We use an FMMNN architecture of width 200, rank 40, and depth 3, corresponding to 16,281 learnable parameters, and train with 10,000
sample points.

\begin{figure}[!htb]
\centering
\begin{tabular}{cc}
(a)&(b)\\
\includegraphics[width=0.33\textwidth]{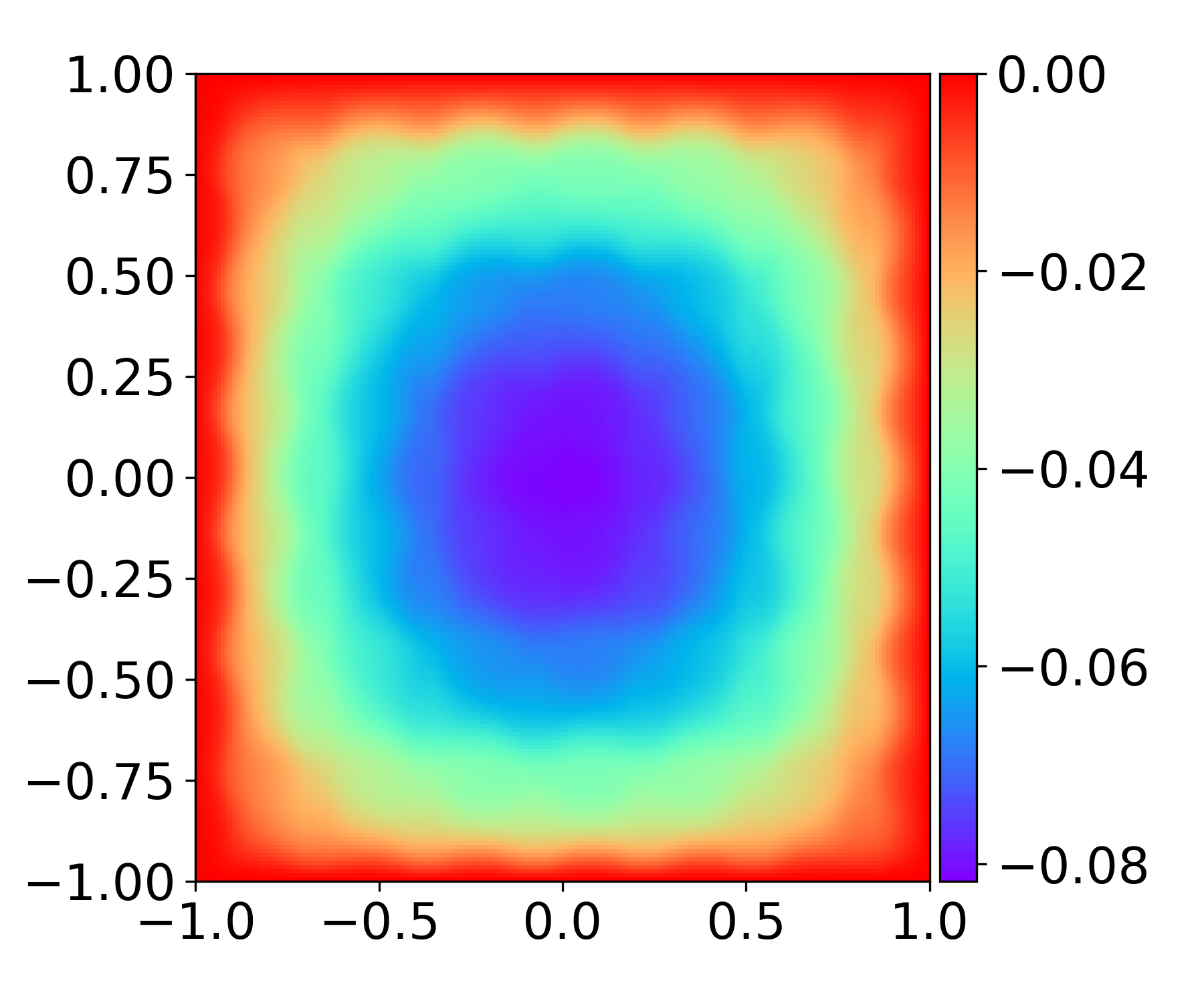}&
\includegraphics[width=0.45\textwidth]{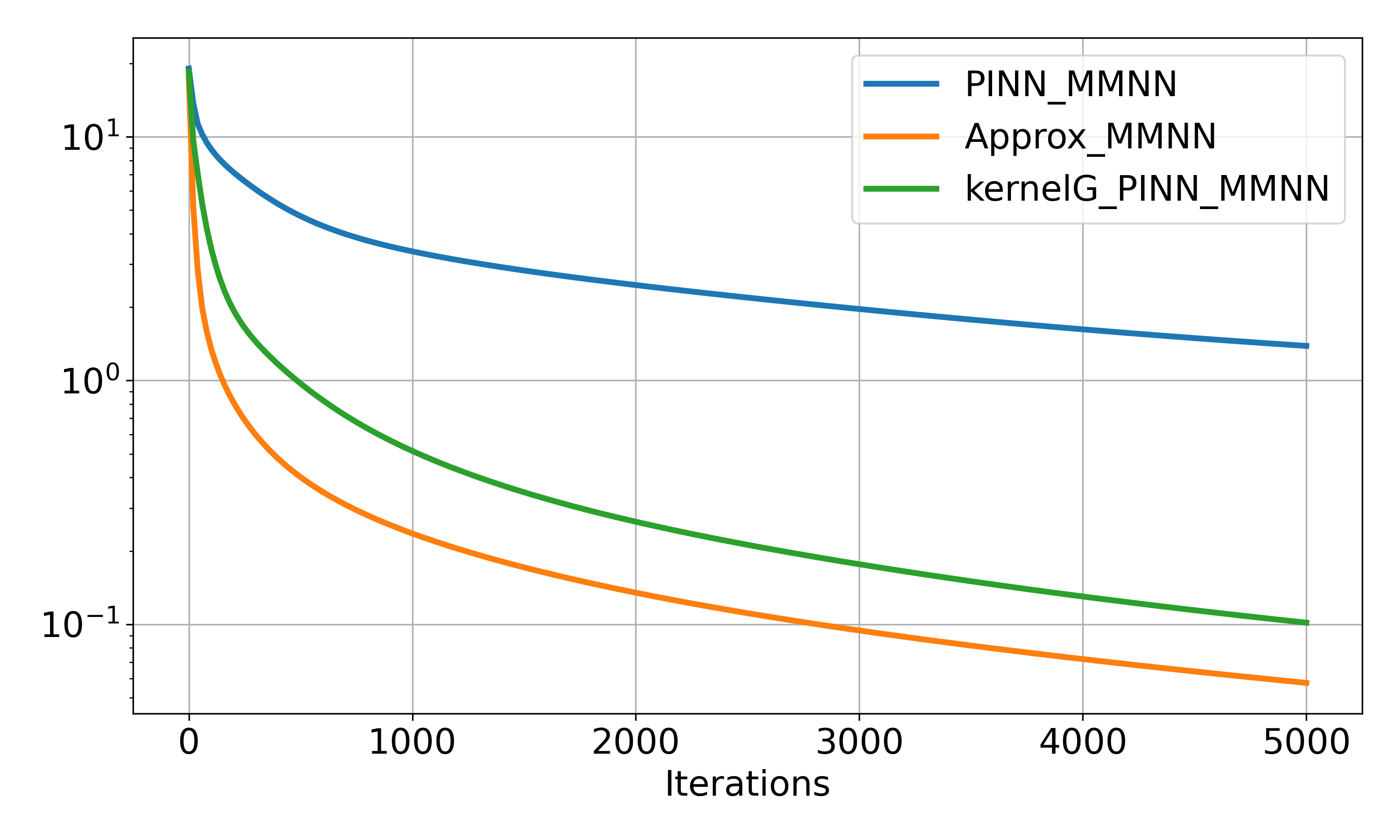}
\end{tabular}
 \caption{Example 9: (a) Exact solution; (b)
 Relative $L^2$ error during training for different models}
 \label{fig:example23}
    \end{figure}

\begin{figure}[!htb]
\centering
\begin{tabular}{ccc}
(a)&(b)&(c)\\
\includegraphics[width=0.33\linewidth]{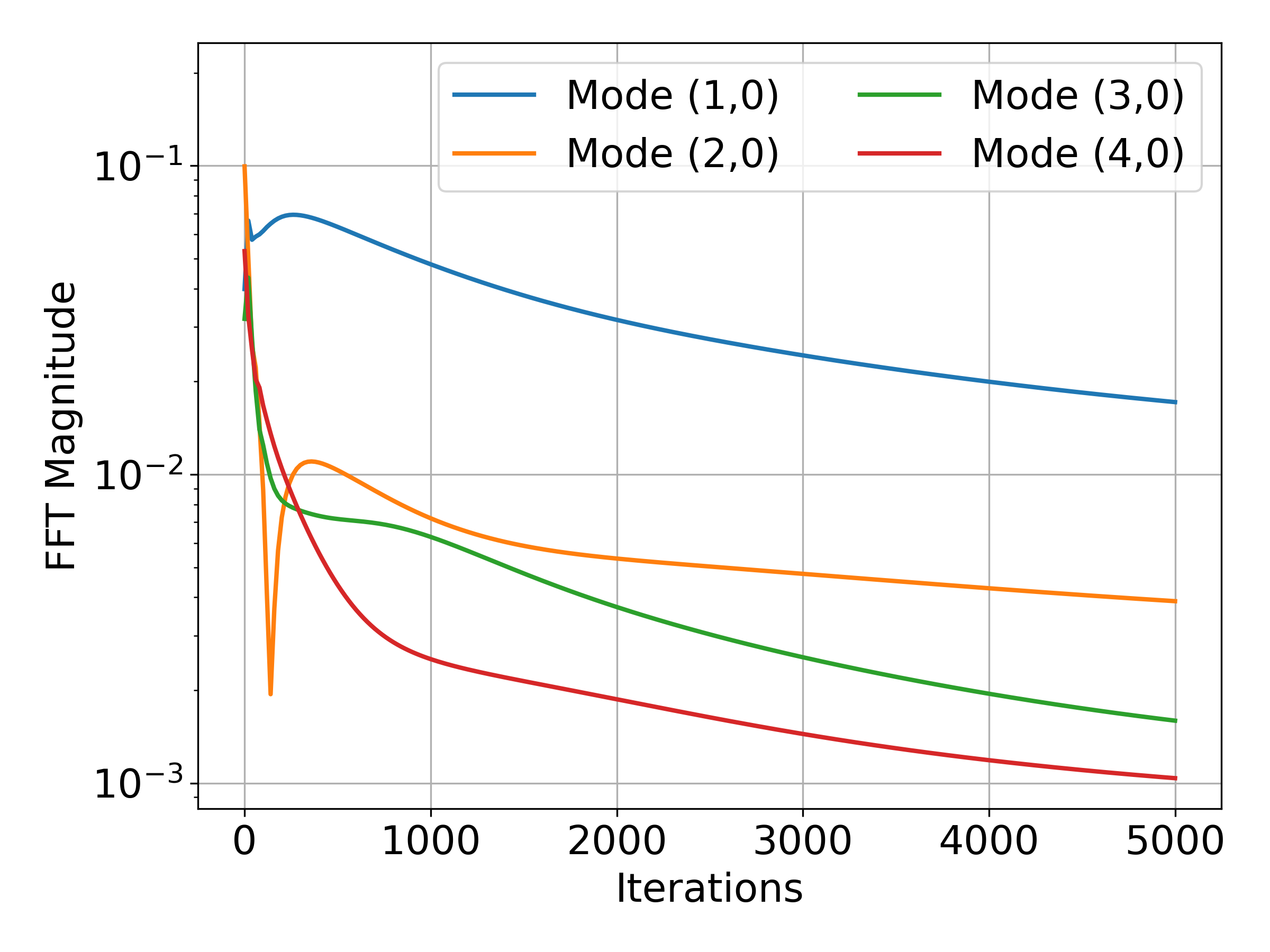}&
\includegraphics[width=0.33\linewidth]{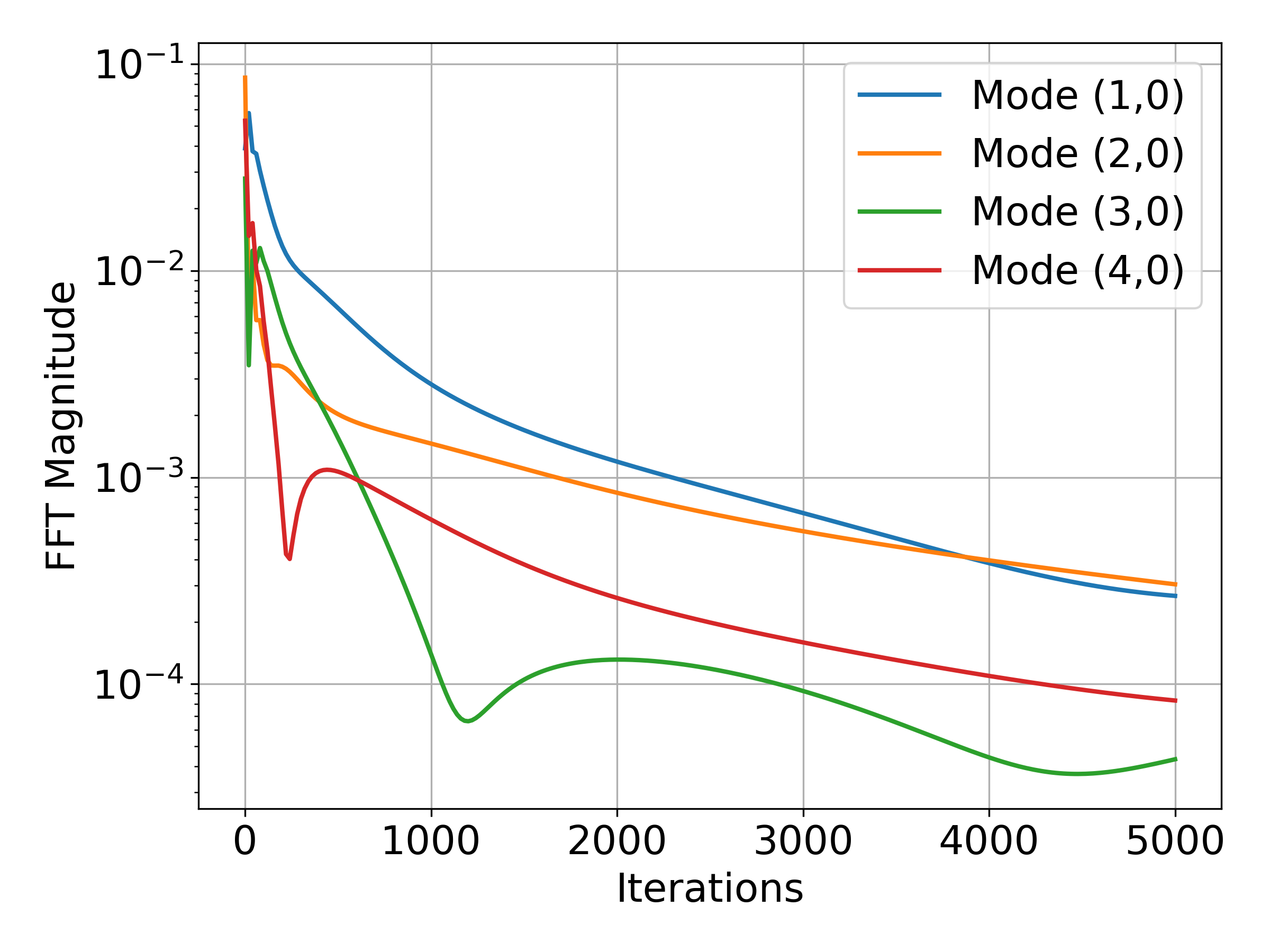}&
\includegraphics[width=0.33\linewidth]{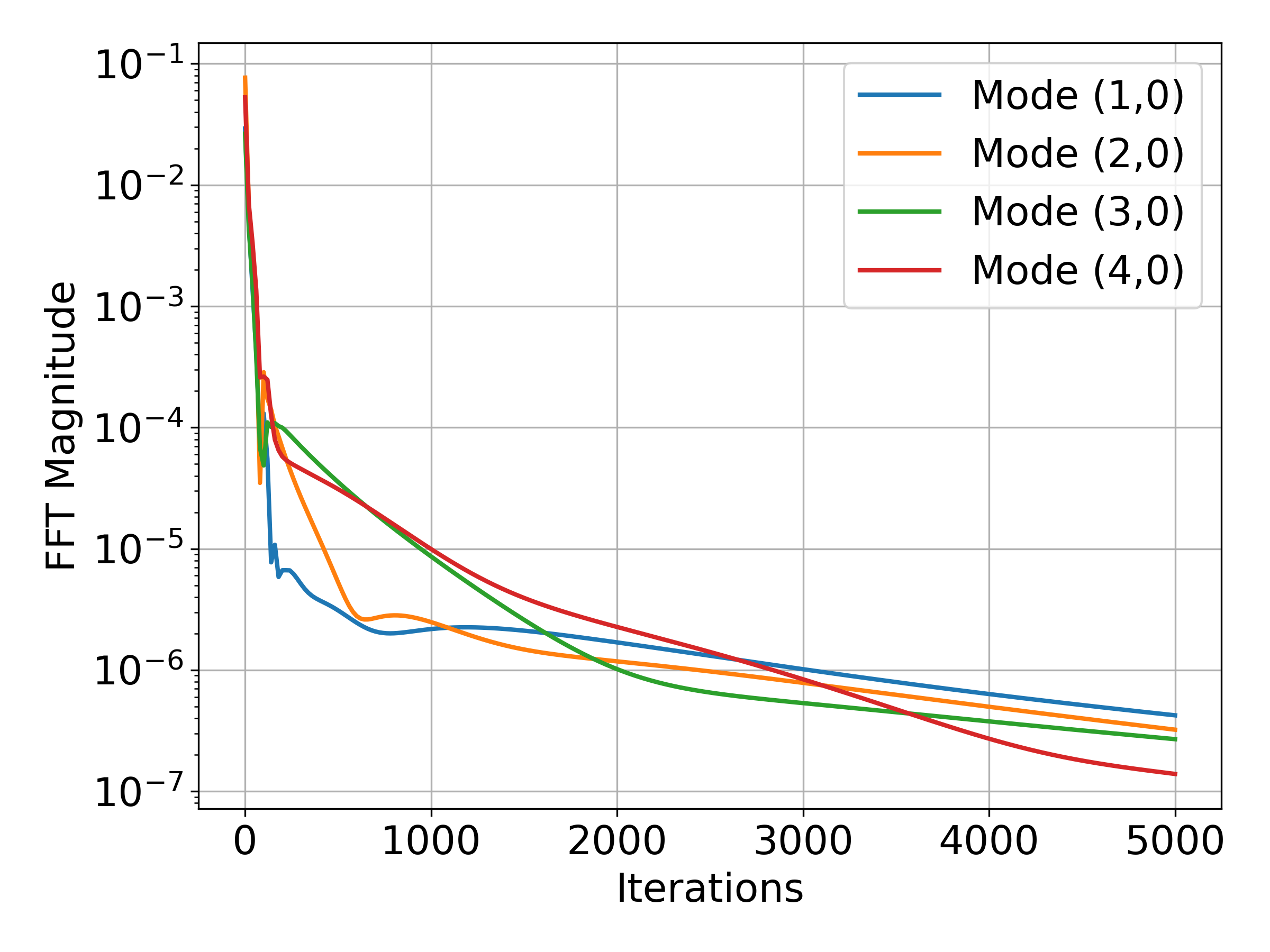}\\
(d)&(e)&(f)\\
\includegraphics[width=0.33\linewidth]{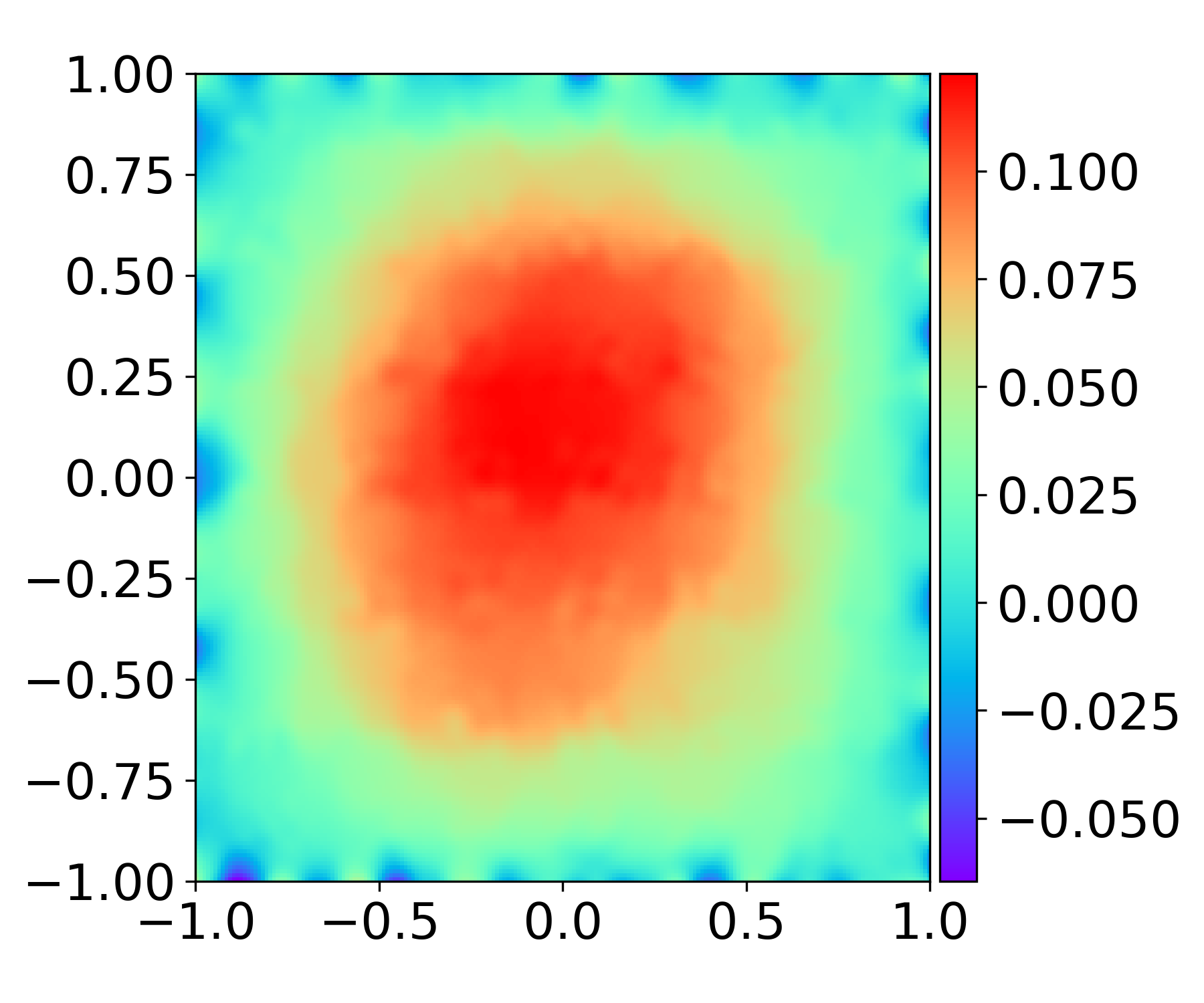}&
\includegraphics[width=0.33\linewidth]{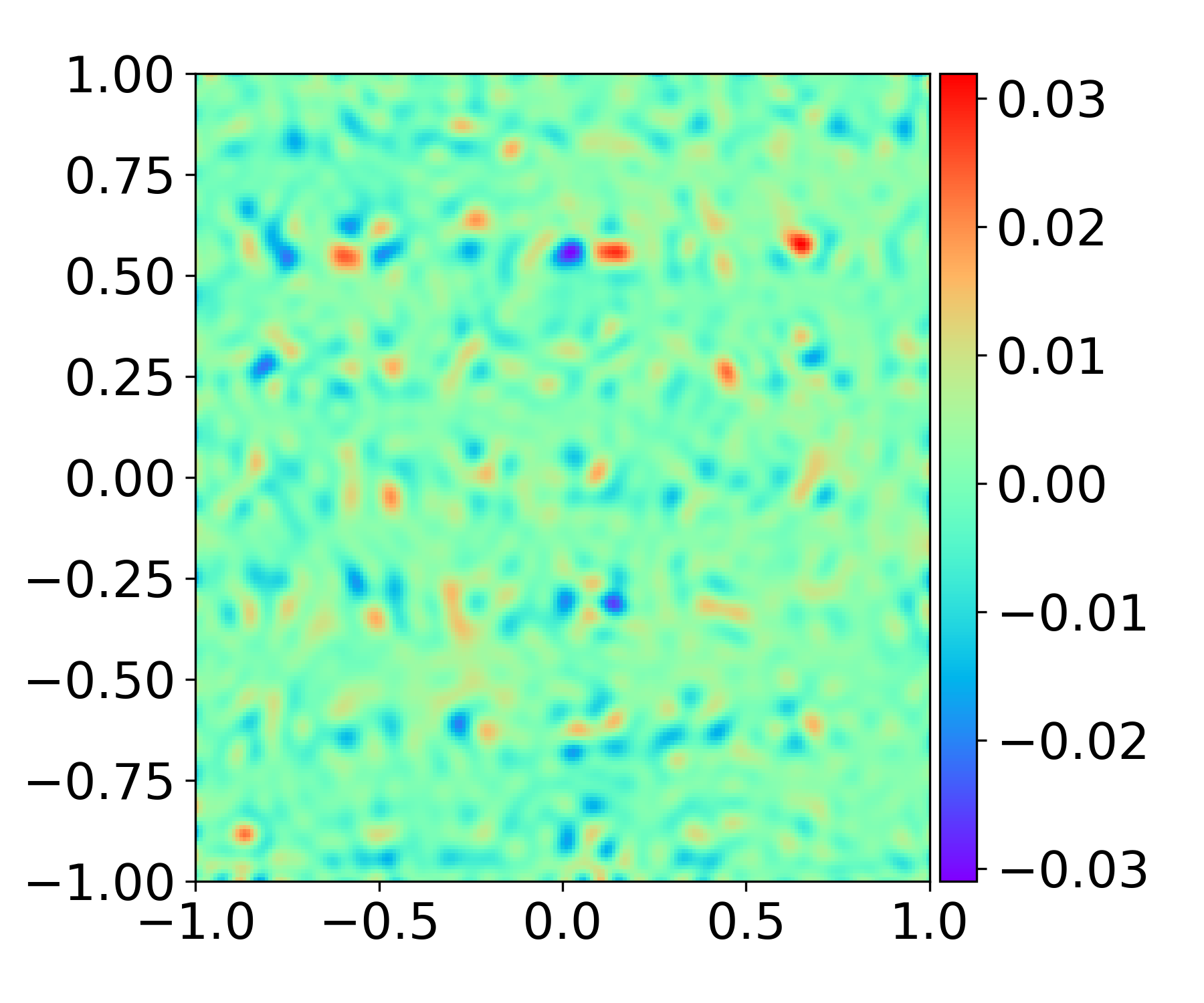}&
\includegraphics[width=0.33\linewidth]{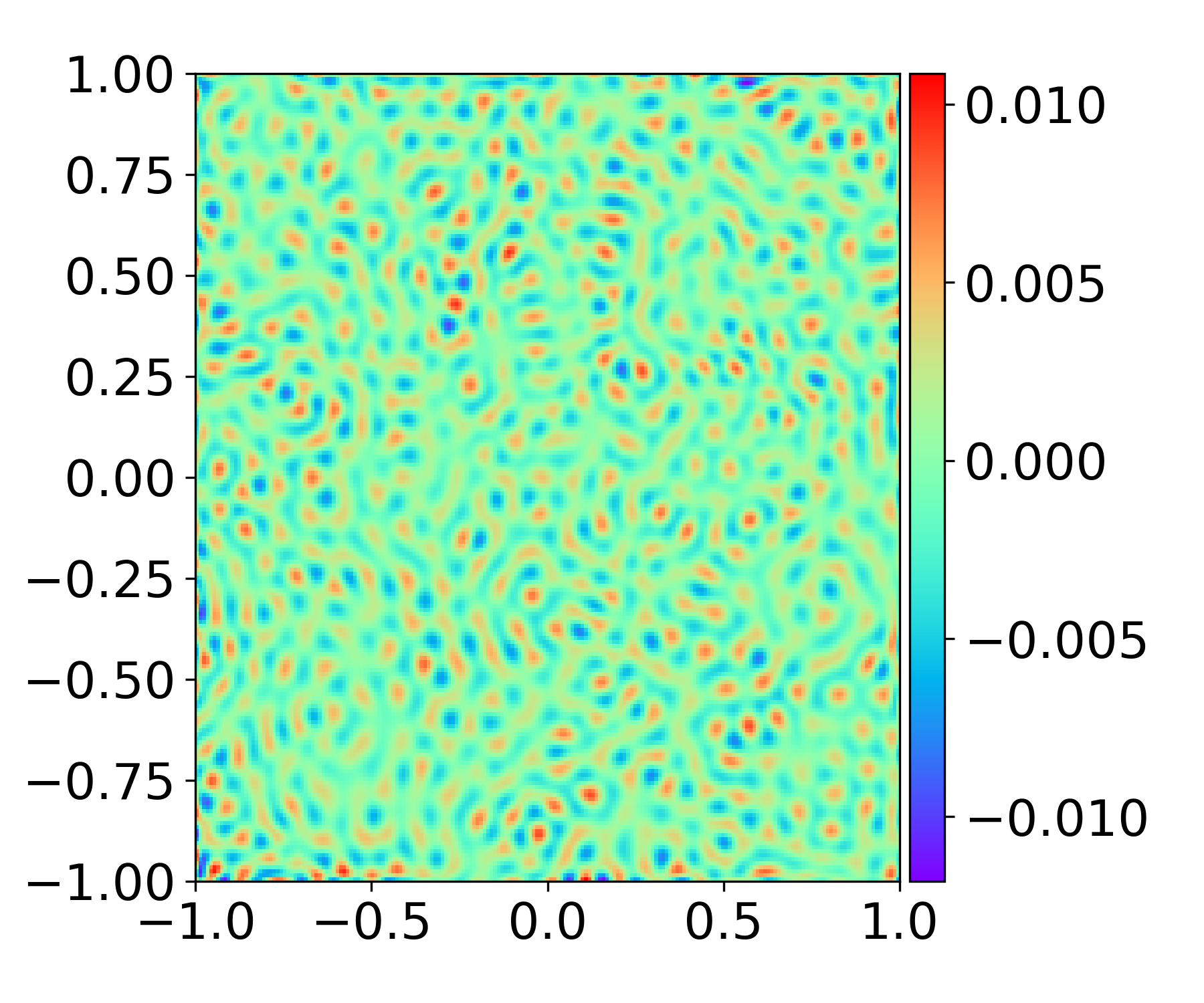}

\end{tabular}
  \caption{Example 9: Errors in selected modes for PINN (a), preconditioned PINN (b) and approximation (c); Spatial error distribution for PINN (d), preconditioned PINN (e) and  approximation (f)}
 \label{fig:example23_fourier_and_difference}
\end{figure}

This is a typical example of an equation with a multiscale feature in two dimensions. Without preconditioning, PINN fails to capture the solution, maintaining a relative $L^2$ error of $1.39$ , after 5,000 epochs. 
 However, the preconditioned PINN achieves much better accuracy with relative $L^2$ error of $1.02\times 10^{-1}$, although less accurate than the approximation with relative  $L^2$ error of $5.75\times 10^{-2}$. While the free-space Green's function for the Laplace equation used is not the exact one for this variable-coefficient operator, the frequency bias caused by the differential operator is pretty well offset by the approximate preconditioning, which is shown both from the error mode dynamics and the error plot in Fig.~\ref{fig:example23_fourier_and_difference}. This example demonstrates that preconditioning remains effective even for challenging multiscale variable-coefficient problems where the preconditioner is only approximate.

\textbf{Example 10:}  We solve the Poisson equation $\Delta u = -f$ in a unit ball $\Omega = B(0,1)\subset \mathbb{R}^3$ with homogeneous Dirichlet boundary conditions, and the source function $f$ is chosen such that the exact solution is
\[
\begin{aligned}
u(x) =\;
&1.0 \cdot \phi(x; c^{(1)}, 0.8) \\
&+ 0.7 \cdot \phi(x; c^{(2)}, 0.4) \\
&+ 0.5 \cdot \phi(x; c^{(3)}, 0.3),
\end{aligned}
\]
where
\[
c^{(1)} = (0.0, 0.0, 0.0), \quad
c^{(2)} = (0.5, 0.5, 0.1), \quad
c^{(3)} = (-0.4, 0.3, -0.1),
\]
and $\phi$ is the smooth compactly supported bump function
\[
\phi(x; c, s) =
\begin{cases}
\displaystyle
\exp\!\left(
-\dfrac{1}{1 - \dfrac{|x - c|^2}{s^2}}
\right),
& \text{if } |x- c| < s, \\[1.2em]
0, & \text{otherwise}.
\end{cases}
\]
 We use an MMNN architecture of width 200, rank 40, and depth 3, corresponding to 16,281 learnable parameters, and train with 65,450  sample points.

 In the three-dimensional setting, the frequency bias is analyzed using a spectral decomposition based on spherical Bessel functions and spherical harmonics. 
We consider basis functions of the form
\[
j_\ell(z_{\ell n} r)\, Y_{\ell m}(\theta,\varphi),
\]
which form an orthogonal system in the unit ball. 
Here $j_\ell$ denotes the spherical Bessel function of order $\ell$, 
$Y_{\ell m}$ is the spherical harmonic of degree $\ell$ and order $m$, 
$r=|x|$, and $z_{\ell n}$ is the $n$-th positive zero of $j_\ell$.
Each mode is indexed by a triplet $(\ell,m,n)$, where $\ell$ controls the angular frequency,
$m$ specifies the angular orientation, and $n$ determines the radial frequency.
To investigate frequency-dependent learning behavior, we track a selected set of modes $(\ell,m,n)$  to quantify how different spatial frequencies are learned. 
In addition, we visualize the spatial error on the cross-section $z=0$. The results are presented in Fig.~\ref{Example10} and Fig.~\ref{fig:example10_fourier_and_difference}, which show behaviors and patterns similar to those in the two-dimensional case. The mode dynamics and spatial error visualization verify that the frequency bias caused by the differential operator is substantially mitigated by the approximate preconditioning in the three-dimensional setting.
\begin{figure}[!htb]
\centering
\begin{tabular}{cc}
(a)&(b)\\
\includegraphics[width=0.33\textwidth]{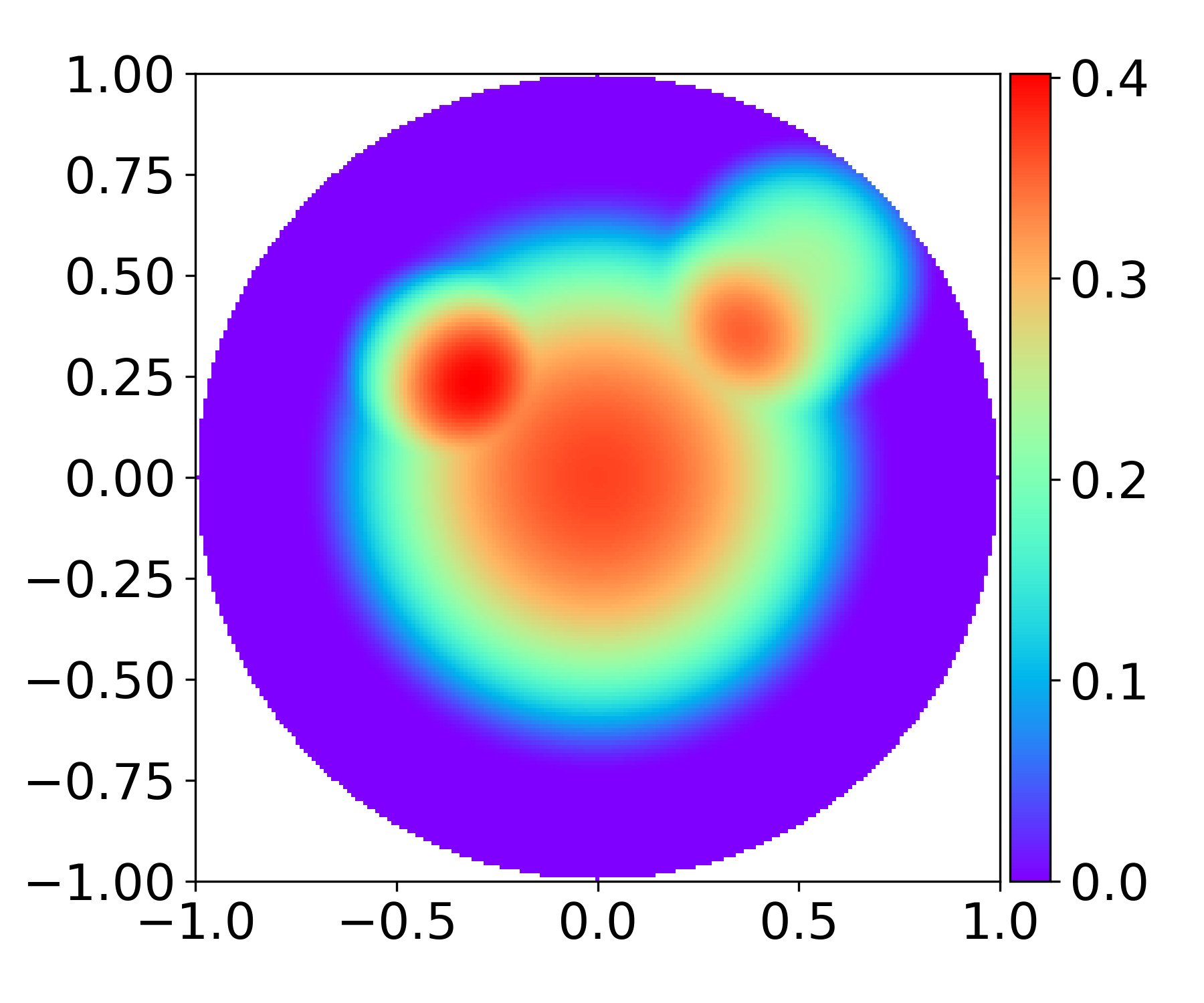}&
\includegraphics[width=0.45\textwidth]{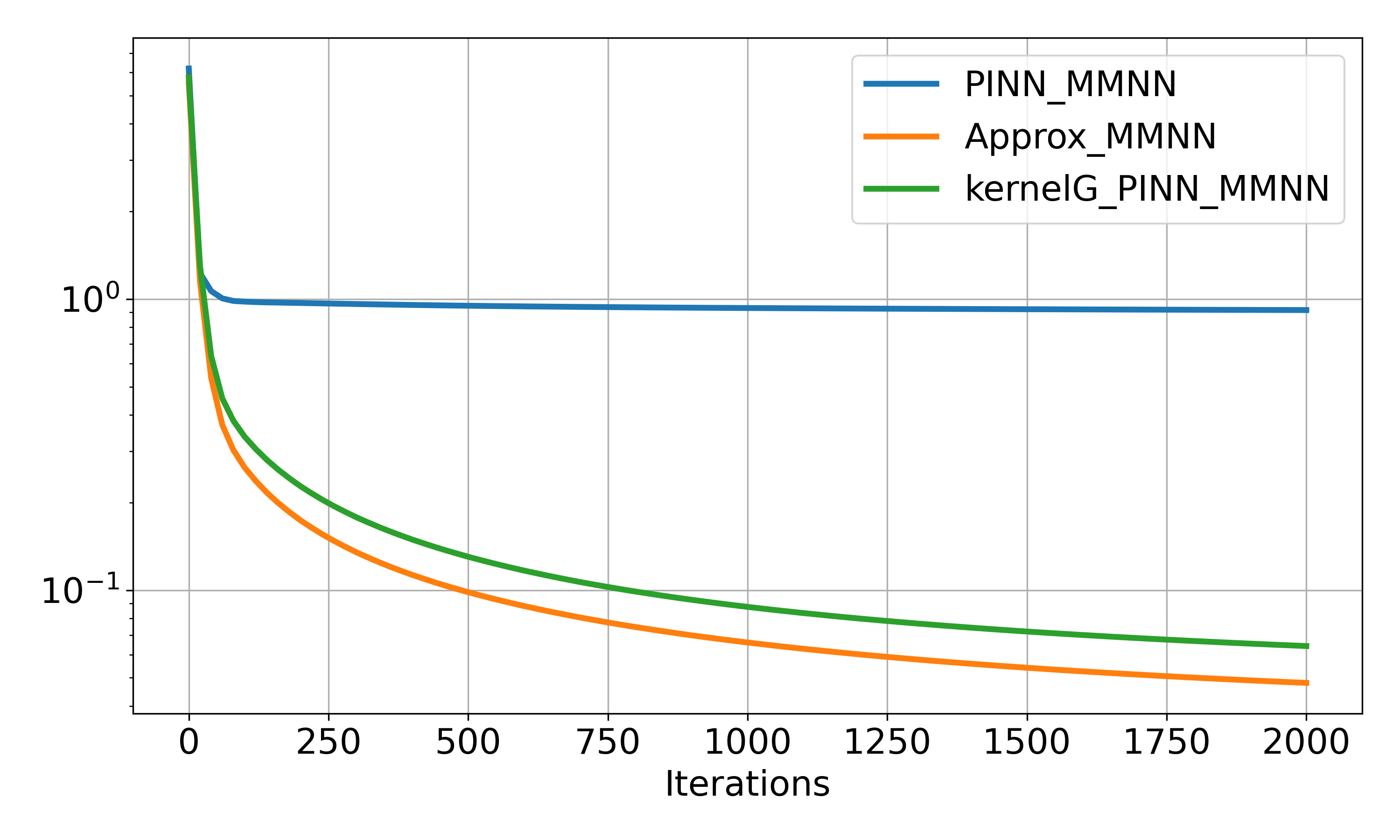}
\end{tabular}
 \caption{Example 10: 
(a) Cross-section of the exact solution at $z =0$;
 (b) Relative $L^2$ error during training for different models.}\label{Example10}
    \end{figure}

\begin{figure}[!htb]
\centering
\begin{tabular}{ccc}
(a)&(b)&(c)\\
\includegraphics[width=0.33\linewidth]{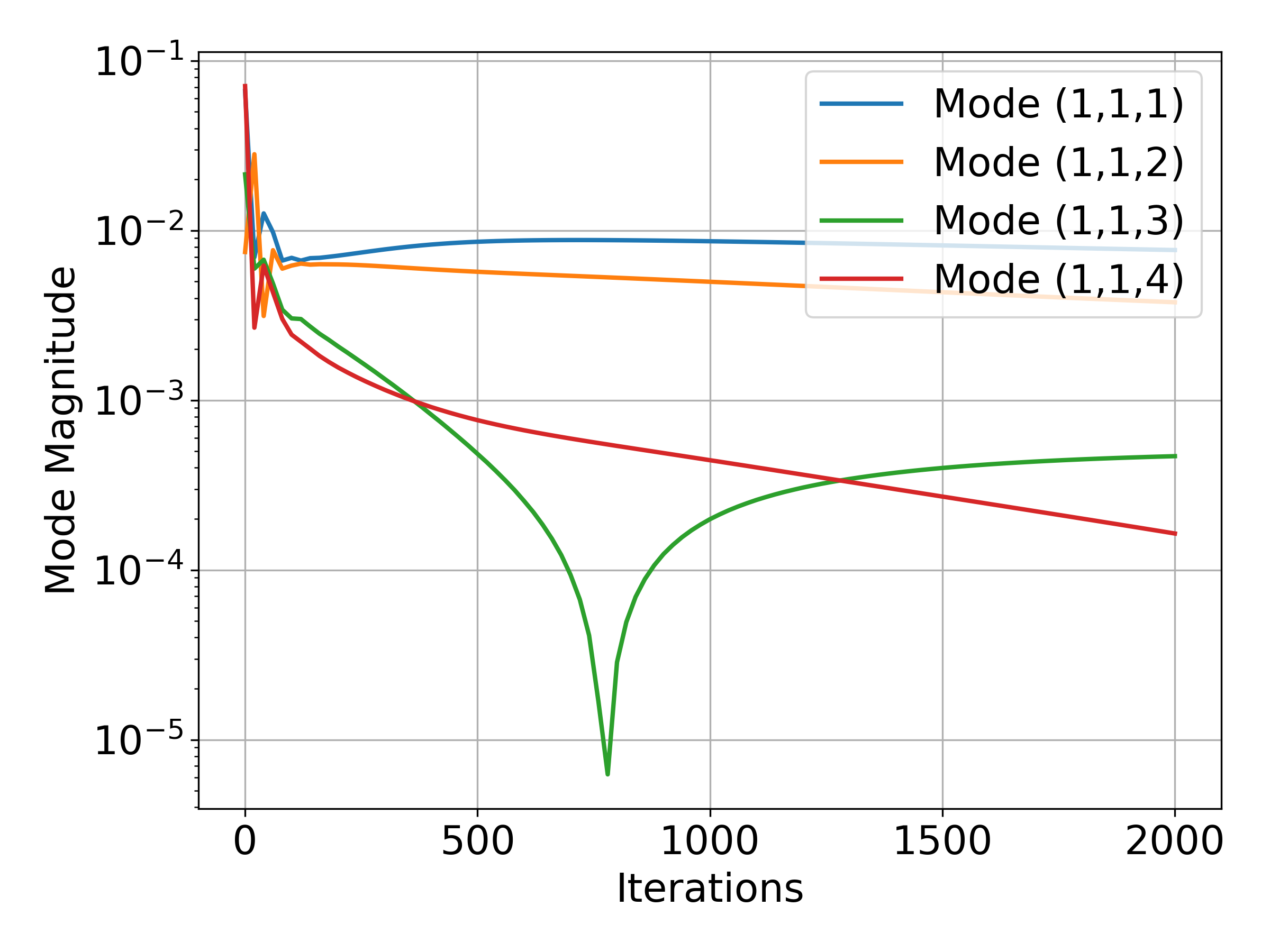}&
\includegraphics[width=0.33\linewidth]{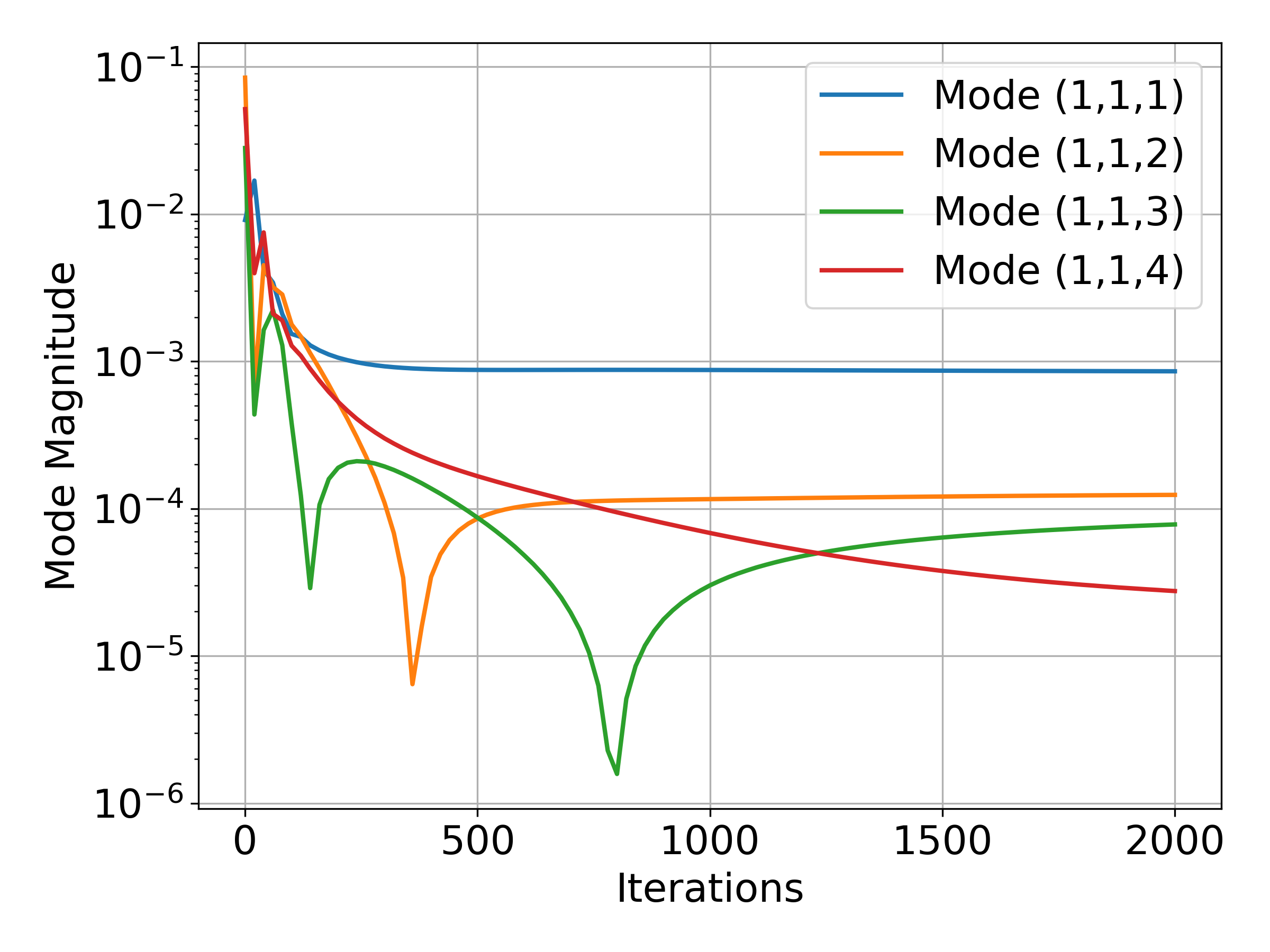}&
\includegraphics[width=0.33\linewidth]{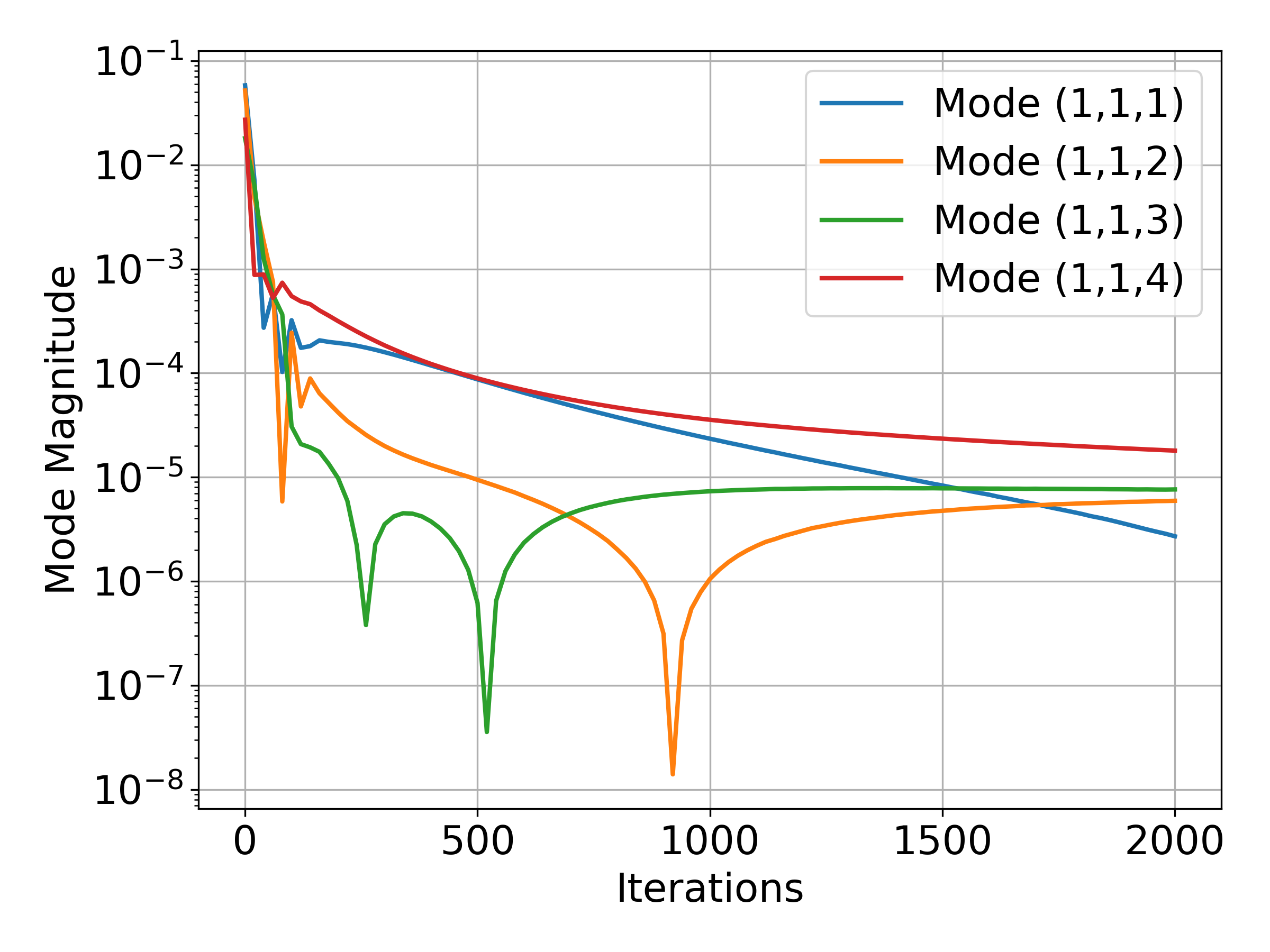}\\
(d)&(e)&(f)\\
\includegraphics[width=0.33\linewidth]{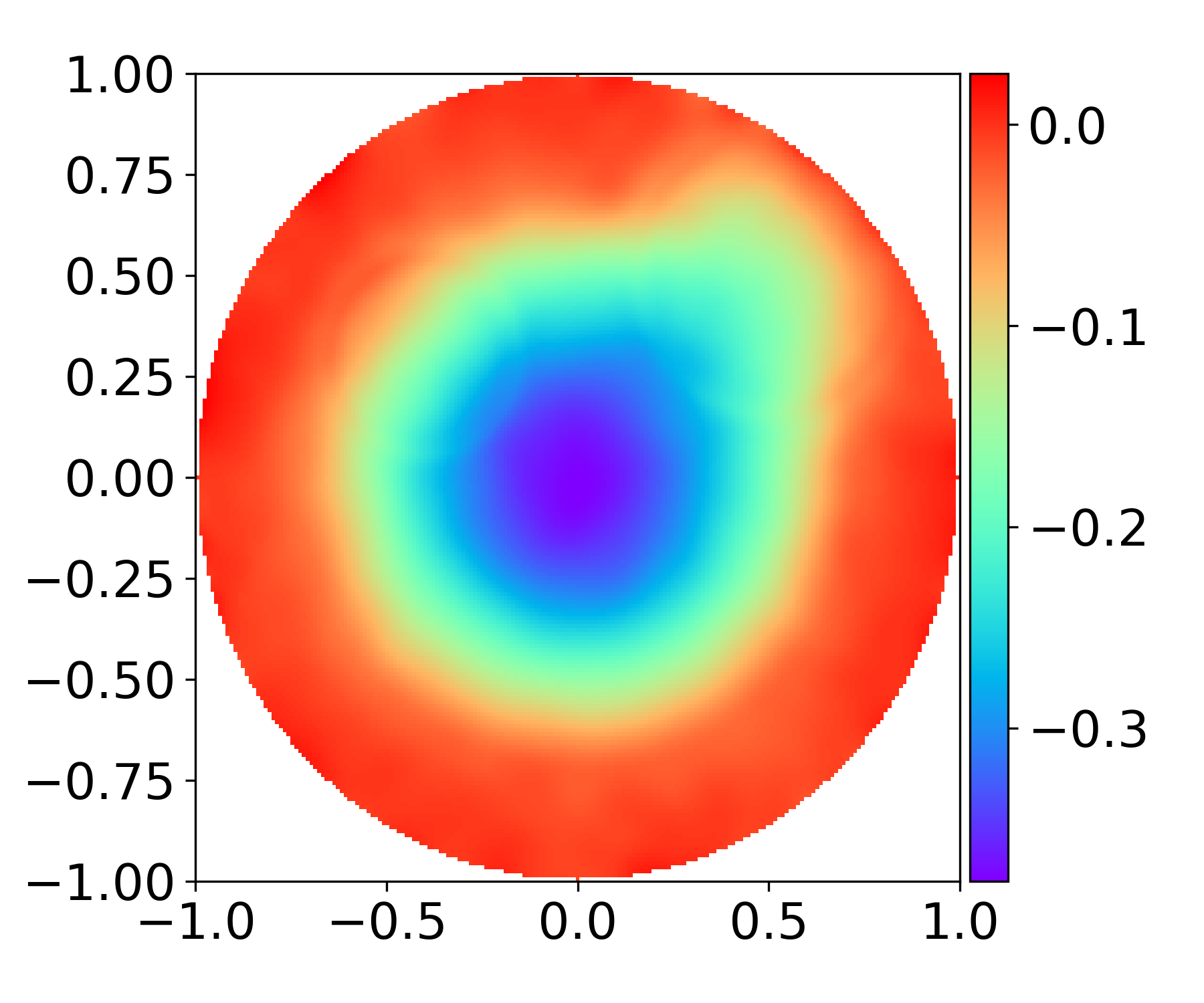}&
\includegraphics[width=0.33\linewidth]{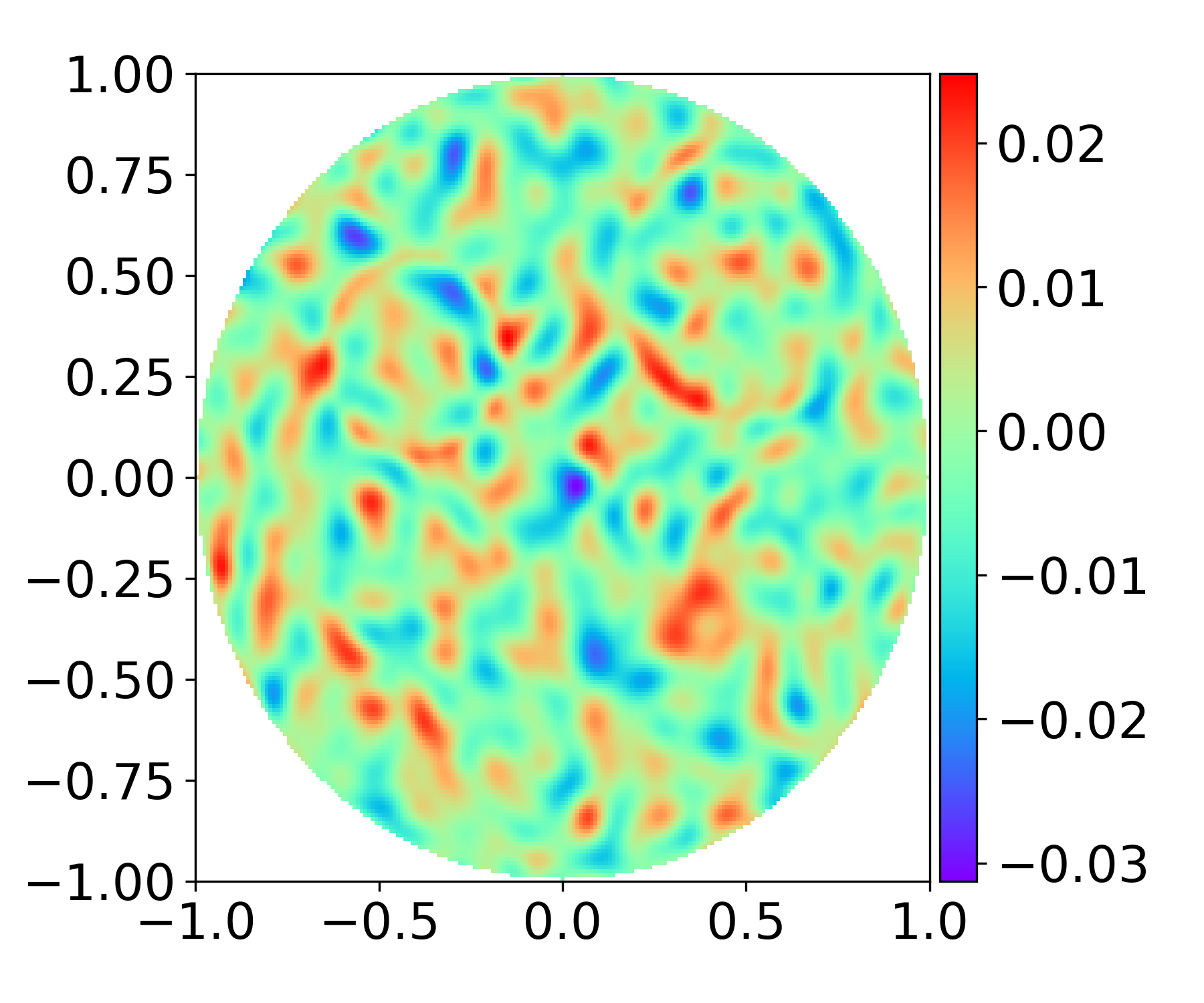}&
\includegraphics[width=0.33\linewidth]{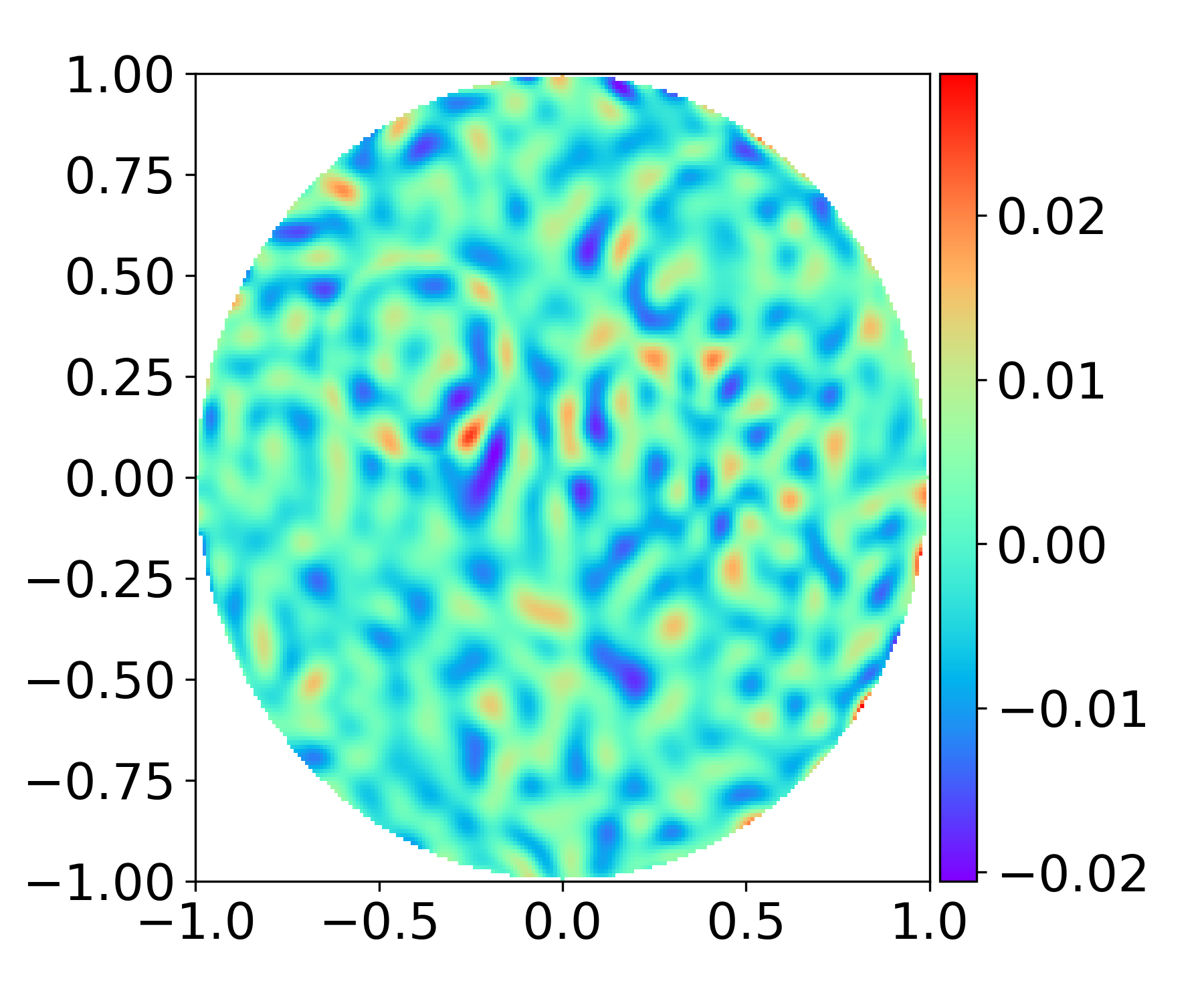}

\end{tabular}
  \caption{Example 10: Errors in selected modes for PINN (a), preconditioned PINN (b) and approximation (c); Spatial error distributions for PINN (d), preconditioned PINN (e) and  approximation (f).}
    \label{fig:example10_fourier_and_difference}
\end{figure}

\section{Conclusion}\label{Sec:Conclusion} 
In this work, we investigated the competition and balance between the spectral biases in NN representation and the differential operator when NNs are trained to solve elliptic PDEs. While strong low frequency bias of two-layer NNs will dominate differential operators as shown in~\cite{he2025can}, the situation can be quite different for multilayer NNs, which can have powerful representation capability and weak frequency bias when properly designed. In this work, it is shown that the inherent high frequency bias of a differential operator may dominate the overall formulation, which makes training inefficient even for smooth solutions. 

To address this issue, we propose an operator-aware preconditioning strategy that explicitly offsets the frequency bias of the differential operator and improves both efficiency and accuracy of the training process significantly. 
Extensive numerical experiments demonstrate the effectiveness of our strategy. 

A key take-home message from this study is that understanding spectral behaviors, which are strongly correlated with the efficiency and accuracy of NN representation and its training process, is essential in practice.
The operator-aware perspective proposed in this work suggests an effective problem-dependent approach, which also leads to interesting questions about other formulations, such as the deep Ritz method and other variational or residual-based neural solvers. Furthermore, in more complicated problems, such as operator learning, multiple competing loss terms, such as interior residuals, boundary errors, and operator evaluations, may exhibit distinct spectral behaviors, leading to complex training dynamics. The design of effective preconditioning strategies that can achieve an overall spectral balance will be essential.

%Although our analysis and preconditioning strategy in this work address the PINN formulation, the spectral bias induced by different optimization formulation or PDE types, such as the deep Ritz method and other variational or residual based neural solvers, is still not well understood. Moreover, in both operator learning and Neural PDE solvers, multiple competing loss terms, such as interior residuals, boundary penalties, and operator evaluations, may exhibit distinct spectral behaviors, leading to complex training dynamics. While recent works have explored balancing these terms at the level of gradient magnitudes, how to design preconditioning strategies that interact with the underlying spectral bias of the formulation and with operator-induced effects remains unclear. Addressing these issues will be essential for developing robust and scalable neural PDE solvers.

\section*{Acknowledgments}
Hongkai Zhao is partially supported by the National Science Foundation through grant DMS-2309551. Yimin Zhong is partially supported by the National Science Foundation through grant DMS-2309530. Roy Y. He is partially supported by NSFC grant 12501594, PROCORE-France/Hong Kong Joint Research Scheme by the RGC of Hong Kong and the Consulate General of France in Hong Kong (F-CityU101/24), StUp - CityU 7200779 from City University of Hong Kong, and the Hong Kong Research Grant
Council ECS grant 21309625.

\bibliographystyle{alpha}
\bibliography{sample}

\end{document}